\theoremstyle{plain}
\newtheorem{theorem}{Theorem}[section]
\newtheorem{lemma}[theorem]{Lemma}
\newtheorem{prop}[theorem]{Proposition}
\newtheorem{coro}[theorem]{Corollary}
\theoremstyle{definition}
\newtheorem{rema}[theorem]{Remark}
\newtheorem{notation}[theorem]{Notation}
\newtheorem{convention}[theorem]{Convention}
\newtheorem{defi}[theorem]{Definition}
\numberwithin{equation}{section}
\newcommand{\cF}{\mathcal F}
\newcommand{\cI}{\mathcal I}
\newcommand{\cL}{\mathcal L}
\newcommand{\cM}{\mathcal M}
\newcommand{\cP}{\mathcal P}
\newcommand{\cT}{\mathcal T}
\newcommand{\cU}{\mathcal U}
\newcommand{\ccP}{\mathscr{P}}
\newcommand{\al}{\alpha}
\newcommand{\be}{\beta}
\newcommand{\ga}{\gamma}
\newcommand{\Ga}{\Gamma}
\newcommand{\de}{\delta}
\newcommand{\la}{\lambda}
\newcommand{\La}{\Lambda}
\newcommand{\si}{\sigma}
\newcommand{\Si}{\Sigma}
\newcommand{\Om}{\Omega}
\newcommand{\RR}{\mathbb R}
\newcommand{\rar}{\rightarrow}
\newcommand{\tr}{\operatorname{tr}}
\newcommand{\ve}{\varepsilon}
\newcommand{\dive}{\operatorname{div}}
\newcommand{\p}{\parallel}
\newcommand{\K}{\mathscr{K}}
\newcommand{\ie}{\epsilon}
\newcommand{\Temp}{K}
\newcommand{\scr}{\varsigma}
\newcommand{\D}{\overline{\nabla}}
\newcommand{\mss}{\hspace{0.2cm}}
\newcommand{\ms}{\hspace{0.25cm}}
\newcommand{\msm}{\hspace{0.3cm}}
\newcommand{\msb}{\hspace{0.35cm}}
\newcommand{\Em}{\emph}
\newcommand{\fracbarA}{\frac{ \partial }{\partial x } {}_{ \hspace{-0.05cm} \bar{A} }}
\newcommand{\fracbarAline}{\frac{ \partial }{\partial x } {}_{ {}^{ \hspace{-0.05cm} \bar{A} } } }
\newcommand{\nosum}{ {\sum \hspace{-0.25cm} \big |} }
\newcommand{\wx}{\widetilde{x}}
\newcommand{\we}{\widetilde{e}}
\newcommand{\wGa}{\widetilde{\Gamma}}
\newcommand{\wk}{\widetilde{\kappa}}
\title[Einstein-Euler-Entropy system]{Remarks on the Einstein-Euler-Entropy system}
\author[Disconzi]{Marcelo M. Disconzi}
\address{Department of Mathematics\\
Vanderbilt University, Nashville, TN 37240, USA}
\email{marcelo.disconzi@vanderbilt.edu}
\begin{document}

\begin{abstract}
We prove short-time existence for the Einstein-Euler-Entropy system
for non-isentropic fluids with
data in uniformly local Sobolev spaces. 
The cases of compact as well as non-compact
Cauchy surfaces are covered.
The method employed uses a Lagrangian description of the fluid flow
which 
is based on techniques developed by Friedrich, hence providing 
a completely different proof of earlier results of 
Choquet-Bruhat and Lichnerowicz. This new proof  is 
specially suited for applications to self-gravitating fluid bodies.
\end{abstract}

\maketitle

\tableofcontents

%REMARK: All commented references to pages are to my notes 
%on this problem, 
%unless stated otherwise
% LATEX note: when using super and subscripts, notice that the
% lower subscripts appear lower when
%there is a superscript on the same expression. Analogously for superscripts (appear higher).
%So, when we want to force an index down, we put a ^{}, i.e., an empty upper index.

\section{Introduction. \label{intro}}

The Einstein equations have been a source of several 
interesting problems in Physics, Analysis and Geometry. 
Despite the great deal of work that has been devoted to them, 
with many success stories, several important questions remain
(see \cite{CGP, Da, Schmidt_ed} for an account
of what is currently known and some 
directions of future research).
One of  them is finding a satisfactory theory of isolated systems, 
such as stars, both from a perspective of the time development of 
the space-time, as well as from the point of view of the 
geometry induced on a space-like three surface. 
To quote Rendall,
``of the physical situations which can be described by the general theory of relativity, those which are 
at the present most accessible to observation are the isolated systems. In fact, all existing tests of
Einstein's equations concern such situations. It is therefore important to have a theory of these systems 
which is as complete as possible, not only in terms of the range of phenomena which are covered but also 
with respect to logical and mathematical solidity'' \cite{Re}.

Stars are the prototypes of isolated systems. They are typically modeled by 
considering a region of space filled with a fluid and separated from an 
exterior that corresponds to vacuum. The properties
of the fluid, such as perfection, viscosity, charge, etc., depend on the particular
situation one is interested in. The dynamics of the fluid region is then described 
by Einstein equations coupled to matter, whereas vacuum 
Einstein equations hold on the complement of this set.
From the point of view of the Cauchy problem, which will be the main case of interest in 
this work, there  are  two primary questions to be addressed.

(i) First, the solvability of Einstein equations in the two different situations
of interest, i.e., coupled to fluid sources and in vacuum, should  be addressed.
Since the short-time existence for vacuum Einstein equations is well 
understood (see e.g. \cite{HE}), this leaves us with the coupling to matter.
A specific matter model has then to be chosen, and one of the most 
customary choices, sufficient for many applications, is that of a perfect fluid
\cite{Anile}, in which case Einstein equations are coupled to the (relativistic)
Euler equations. The well-posedness\footnote{``Well-posedness'' should be understood here
in the context of General Relativity, where uniqueness is meant in a geometric sense, i.e.,
up to isometries.}
 of this system was proven by 
Choquet-Bruhat \cite{C2} and extended by Lichnerowicz \cite{Lich} to include 
entropy --- in which case the resulting system will be called the Einstein-Euler-Entropy 
system, whose equations are stated in section \ref{EEE_system_section}.

(ii) The second question is more delicate and consists of trying to bring 
together the two different scenarios described in the previous paragraph, namely,
vacuum and coupling to matter. More precisely, we attempt to formulate and solve
the Cauchy problem with an interior region --- thought of as the star --- governed by the 
Einstein-Euler (or Einstein-Euler-Entropy) equations
and an exterior one that evolves according to the vacuum Einstein equations. 
This corresponds to a genuine free-boundary problem in that the boundary of the star 
cannot be prescribed for time $t>0$, being rather a dynamic quantity that has to be determined
from the evolution equations. The star boundary at time zero is obtained
as the boundary of the support $\Om$ of the initial matter density $\varrho_0$, which is 
a function on the initial Cauchy surface typically of the form
\begin{gather}
\varrho_0(x) =
 \begin{cases}
  f(x) > 0, &  x \in \Om, \\
 0, & x \notin \Om.
 \end{cases}
\label{intro_matter_vanish}
\end{gather}
Solving the Cauchy problem then requires a refined
analysis of the boundary behavior of the quantities involved, where the 
change from $G_{\al\be} = T_{\al\be}$ to $G_{\al\be} = 0$ causes 
severe technical difficulties (here $G_{\al\be}$ and $T_{\al\be}$ are the 
Einstein and stress-energy tensor, respectively).

Lindbom \cite{Lindblom} has proven that any isolated static fluid stellar model 
ought to be spherically symmetric, generalizing a classical  result of
Carleman and Lichtenstein for Newtonian fluids \cite{Carl, Lichten1, Lichten2}.
More precisely, he has shown that 
a  static
asymptotically flat space-time that contains only a uniform density perfect fluid
confined to a  spatially compact world tube is necessarily spherical symmetric.

Under the assumption of spherical symmetry, it is possible to deal
with many of the technicalities that  arise, and a number of satisfactory and general 
results
have been obtained.
 Rendall and Schmidt 
have proven existence and uniqueness results for global solutions of 
the Einstein-Euler system \cite{ReSch}. They also gave
 a detailed account of how the properties
of these solutions depend on features of the equation of state. 
Kind and Ehlers have treated the mixed initial-boundary value problem in \cite{KindEhlers}, 
where they have also given necessary and sufficient conditions for attaching the solutions 
they  
constructed to a Schwarzschild space-time.
Makino \cite{MakinoSpherical} refined the results of Rendall and Schmidt by 
providing  a general criterion for the equation of state, which ensures that the model
has finite radius (and therefore finite mass). This result is used
to study the linear stability of the equations of motion.
Regarding the relation between mass and radius, in \cite{HRU} the authors 
derive, among other results, interesting mass-radius theorems.
A very extensive treatment of the Cauchy problem for spherically symmetric data
is given in the work of Groah, Smoller, and  Temple \cite{GST}. These works 
do not exhaust all the results known in spherical symmetry; see the references in the above
papers for more details. 

If we drop the hypothesis of spherical symmetry, however, not much is 
known about the well-posedness of the Cauchy problem when the matter 
density is allowed to vanish outside a compact set, as in (\ref{intro_matter_vanish}) 
--- a situation generally referred to as a ``fluid body''. One could attempt to change $\varrho_0$ to a function
that is always positive, but decays sufficiently fast and is very small in the region
outside $\Om$. Unfortunately, this does not improve things considerably because 
generally the time-span of solutions cannot be shown to be uniform, and as a result,
the domain of definition of the solution contains 
no space-like slice $t=\,$constant (except obviously the $t=0$ slice). 
Another problem which arises in this context is that the usual conformal method
for solving the constraint equations \cite{LichConf, YorkConf, CorvinoPollackConf} cannot be applied.

Following the ideas of Makino \cite{Mak}, who studied non-relativistic\footnote{It
should be noticed that even self-gravitating Newtonian fluid bodies 
are not well understood. In fact, with the exception of the viscous case studied
by Secchi \cite{Se1, Se2, Se3}, not many results besides that of Makino \cite{Mak} seem to be available.} 
gaseous stars,
Rendall has shown that the above problems can be circumvented for fluids obeying
certain equations of state \cite{Re2} --- although, as the author himself points out,
the solutions obtained in these cases have the undesirable property of not including 
statically spherically symmetric space-times, and the 
restrictions on the equation of state are too strong.
Initial data sets for the Einstein-Euler equations describing a fluid body
have been successfully constructed by Brauer and Karp \cite{BrauerKarp1}
and Dain and Nagy \cite{DainNagy}, but 
well-posedness of the system with such prescribed initial values has not yet 
been demonstrated. Therefore, the solution to the Cauchy problem for a 
self-gravitating isolated fluid body is still 
largely open (although some recent results of Brauer and
 Karp \cite{BrauerKarp2, BrauerKarp3, BrauerKarp4} 
address directly some of the technical issues we referred to).

An important step forward has been achieved by Friedrich in \cite{Fri}. Using
a Lagrangian description of the fluid motion, he has been able to derive 
a set of reduced equations which form a first order symmetric hyperbolic system.
While it has been known that the vacuum Einstein equations can be cast 
in such a form since the work of Fischer and Marsden \cite{FischerMarsden}, and 
the Einstein-Euler system had also been investigated within the formalism of 
first order symmetric hyperbolic systems 
in the aforementioned work of Rendall, what makes Friedrich's construction
particularly attractive is the use of the Lagrangian description of the fluid flow,
as it is known that Lagrangian coordinates are uniquely suited
for treating free boundary problems involving the non-relativistic 
Euler equations, and have in fact been employed to a great success 
to study them \cite{E1, CS, ShaZen}.

It remains to be seen whether Friedrich's ideas will 
lead to satisfactory existence theorems for the 
Cauchy problem for fluid bodies. 
But for such a project to be successful, one needs to be able 
to carry out step (i) as explained above, namely, to use 
the reduced system of \cite{Fri} to separately solve Einstein equations 
in the cases of vacuum and coupled to fluid matter. Only then 
the passage across the boundary from the Einstein-Euler-Entropy to 
the vacuum equations can be analyzed. Notice that in the case of
vacuum, the notion of a Lagrangian description is somewhat artificial,
but it can be given meaning by the choice of a time-like vector
field which should agree with the fluid four-velocity once matter
is introduced.

Well-posedness of the vacuum Einstein equations using the formalism
of \cite{Fri} has been established in 
\cite{FriHypRed, FriHypGauge} (see also \cite{Friedrich1, Friedrich2, Friedrich3}).
This leaves us with the question of well-posedness of the Einstein-Euler-Entropy system. 
In other words, we seek a solution to the following:

\emph{Establish a well-posedness result for the Einstein-Euler-Entropy system with 
a general equation of state, by writing the set of equations as a first order symmetric
hyperbolic system via a Lagrangian description of the fluid flow,
as in \cite{Fri}, and under the assumption that the fluid fills the entire 
initial Cauchy surface $\Si$, i.e. $\varrho_0 \geq c > 0$, with possibly 
additional hypotheses consistent with physical requirements.}

This is the problem addressed and
solved in this work. Some terminology is needed before we can make 
a precise statement --- see 
theorem \ref{main_theorem} in section \ref{EEE_system_section}.
We stress that theorem \ref{main_theorem} had been proven much earlier
by Choquet-Bruhat \cite{C2} and Lichnerowicz \cite{Lich}. What is new is 
the method of proof, employing the Lagrangian description 
of the fluid flow. It should also be noticed that, while the equations we use
are essentially those of \cite{Fri} and \cite{FriRen}, to the best of 
our knowledge, a complete proof of well-posedness relying on 
Friedrich's techniques is not available in the literature.

\subsection{Outline of the paper and notation.} The 
paper is organized as follows. In section \ref{basic_setting},
we recall several basic definitions, fix our notation, introduce the main
equations and hypotheses, and state the main theorem. In section 
\ref{frame_formalism}, we introduce the frame formalism and gauge 
conditions that constitute the basis of Friedrich's method. 
The gauged or reduced Einstein-Euler-Entropy system is also presented in this section.
The proof of the main theorem is carried out in section 
\ref{proof_main_theorem_section}. It consists of three parts:
determination of the initial data (section \ref{initial_data_section}),
well-posedness of the reduced system (section \ref{well_posedness_reduced_section}),
and the ``propagation of the gauge'' (section \ref{propagation_gauge}).
This last step is what guarantees that a solution of the Einstein-Euler-Entropy
system in a particular gauge --- the reduced system --- yields 
a solution to the original set of equations. Finally, in section 
\ref{further}, we make some closing remarks.

We have chosen to present our arguments in a logical
rather than constructive order. This means that
instead of starting by showing how the reduced equations are obtained from the 
Einstein-Euler-Entropy system by means of a gauge choice, we first 
state the reduced equations, then derive its solutions, and finally show
that they correspond to solutions of the original equations of motion.
This particular order is adopted because the equations we 
use in both the reduced and the propagation
of the gauge systems are equivalent to those of \cite{Fri} and \cite{FriRen}, 
although we shall write them in a slightly different fashion. 
Hence, as they have been considered before, there 
is no immediate need to show how the reduced
equations are extracted from the original ones by a suitable choice of
gauge. This procedure is briefly presented nonetheless in the appendix
for the reader's convenience. Those not familiar with the work \cite{Fri}
are encouraged to read  appendix \ref{derivation_appendix} prior to 
section \ref{proof_main_theorem_section}.

\begin{notation}
The Sobolev space between manifolds $M$ and $N$ will be denoted by $H^s(M,N)$. 
When no confusion can arise, the reference to $N$ will be suppressed, and 
$H^s(M,N)$ will be written $H^s(M)$, or even $H^s$ when $M$ is clear from the context.
$H^s_{ul}$ denotes the uniformly local Sobolev spaces, whose definition is recalled 
in appendix \ref{ul_Sobolev_appendix}.
\end{notation}

\section{The basic setting and the main result\label{basic_setting}.}
Our main object of study is a four-dimensional Lorentzian manifold $(\cM,g)$,
called a \Em{space-time}.  In General Relativity, 
we are interested in the sub-class of Lorentzian manifolds where 
Einstein equations are satisfied. We recall that these are
\begin{align}
R_{\al \be} - \frac{1}{2} R g_{\al \be} =\K  T_{\al \be},
\label{Einstein_eq}
\end{align}
where $R_{\al\be}$ and $R$ are respectively the Ricci and scalar curvature of the metric $g$, 
 $T_{\al\be}$ is the stress-energy tensor which encodes information about the matter fields,
 and $\K =8 \pi \frac{G}{c^4}$, with $c$ being the speed of light and $G$ Newton's
constant\footnote{As it is customary to drop the factor $8 \pi$, and we adopt  units  where
$G=c=1$, we could set $\K=1$. But since we will be working with a lesser familiar formalism, it
is useful to keep the constant $\K$, in which case it is always possible to set $\K = 0$
in order to compare the resulting equations with the well studied vacuum case.}. A
space-time is called an \Em{Einsteinian space-time} when (\ref{Einstein_eq}) is satisfied.
$\cM$ will always be assumed to be oriented and time-oriented.

The left-hand side of (\ref{Einstein_eq}) is divergence free as a consequence of the Bianchi
identities. Hence, regardless of the particular matter model which is considered, the stress energy tensor has to satisfy
\begin{align}
\nabla^\al T_{\al \be} =0,
\label{conservation_T}
\end{align}
which then gives equations of motion for the matter 
fields (see section \ref{EEE_system_section} below); $\nabla$ in (\ref{conservation_T})
denotes the Levi-Civita connection associated with $g$. Equation (\ref{conservation_T}) is
sometimes referred to as the local law of momentum and energy conservation.

\begin{convention} Throughout the paper we shall adopt the convention $(+ \, - \, - \, - )$
for the metric. 
\end{convention}

\begin{defi}  A \Em{fluid source} is a triple $(\cU, g, u)$, where $(\cU,g)$ 
is a domain of a space-time and $u$ a time-like vector field on $\cU$ of unit norm. Physically, 
the trajectories of $u$ represent the flow lines of matter. Given a fluid source, the 
\Em{stress-energy tensor of a perfect fluid} is given by
\begin{align}
T_{\al\be} = (p + \varrho) u_\al u_\be - p g_{\al\be},
\label{perfect_fluid_source}
\end{align}
where $p$, called the \Em{pressure of the fluid}, and $\varrho$, called
the \Em{density of the fluid}, are non-negative real valued functions. A \Em{perfect fluid source}
is a fluid source together with a stress-energy tensor given by (\ref{perfect_fluid_source}).
\end{defi}
Perfect fluid sources, or perfect fluids for short, are 
often used to study space-times where a continuous distribution 
of matter exists (see e.g. \cite{WeinbergGR}). 
The assumption that matter is described by a stress-energy tensor of the form
(\ref{perfect_fluid_source}) means that no dissipation of any sort is present; in
particular one neglects possible effects due to heat conduction, 
viscosity or shear stresses\footnote{In fact, the (here postulated) stress-energy 
tensor (\ref{perfect_fluid_source}),
can be motivated from basic physical principles, which can be taken as 
characterizing ideal fluids; see e.g. \cite{Anile,Dixon}.}.

For perfect fluids, equation (\ref{conservation_T})
becomes
\begin{align}
\begin{split}
\nabla^\al T_{\al\be} = & \, \Big ( (p + \varrho ) \nabla^\al u_\al + u^\al \nabla_\al \varrho  \Big ) u_\be \\
& + (p + \varrho) u^\al \nabla_\al u_\be + u_\be u^\al \nabla_\al p - \nabla_\be p = 0.
\end{split}
\label{conservation_T_perfect}
\end{align}
Taking the inner product of (\ref{conservation_T_perfect}) with $u$ and using 
$u_\al u^\al = 1$ (which also implies $u^\al \nabla_\be u_\al = 0$), gives 
\begin{gather}
 (p + \varrho ) \nabla^\al u_\al + u^\al \nabla_\al \varrho = 0,
\label{Euler_1}
\end{gather}
known as the \Em{conservation of energy} or \Em{continuity equation}. Then, using
(\ref{Euler_1}) into (\ref{conservation_T_perfect}) produces
\begin{gather}
 (p + \varrho) u^\al \nabla_\al u_\be + u_\be u^\al \nabla_\al p - \nabla_\be p = 0,
\label{Euler_2}
\end{gather}
known as \Em{conservation of momentum equation}. Together, equations
(\ref{Euler_1}) and (\ref{Euler_2}) are known as the \Em{Euler equations} for a relativistic fluid.

In physically relevant models, the functions
$p$ and $\varrho$ are not independent but are related by what is called
an \Em{equation of state}, which is an (usually smooth) invertible function 
\begin{gather}
 p = p(\varrho).
\label{barotropic}
\end{gather}
Fluids where (\ref{barotropic}) is satisfied are called \Em{barotropic} fluids, with
the particular case $p \equiv 0$ called \Em{dust} or \Em{pressure-free matter}. Physical 
situations of interest where barotropic fluids are employed include 
some models of cold (more precisely, zero temperature) matter, such 
as completely degenerate cold neutron gases (which are used to model nuclear matter in the interior 
of neutron stars) \cite{C, Anile, ShapiroTeukolsky}; the so-called ultra-relativistic fluids, i.e., 
fluids in thermal equilibrium where the energy 
(which in relativistic terms is described by $\varrho$ due to the equivalence of 
mass and energy) is largely dominated by radiation \cite{Anile, WeinbergGR}; and fluids of 
electron-positron pairs \cite{WeinbergGR}. Barotropic fluids are also important 
in Cosmology, where several models of the early universe assume that the distribution
of matter is described by an ultra-relativistic fluid \cite{C, Anile, WeinbergGR, WeinbergCosmology}.

As usual in General Relativity, space-time itself is a dynamic quantity, therefore
its geometry, encoded in the metric $g$, as well as the the dynamics of other fields present
on space-time, have to be determined as solutions to the Einstein equations (\ref{Einstein_eq}) coupled
to matter via (\ref{conservation_T}). In particular, the fluid source $(\cU,g,u)$ is not a given, but
has to arise from the solutions of the coupled system. The 
\Em{Einstein-Euler system for barotropic fluids}, or \Em{barotropic Einstein-Euler system},  
is given by equations (\ref{Einstein_eq}), (\ref{Euler_1}), (\ref{Euler_2}), 
and (\ref{barotropic}), with $T_{\al\be}$ given by (\ref{perfect_fluid_source}),
and subject to the constraint $u^\al u_\al = 1$. The unknowns to be determined 
are the metric $g$, the
four-velocity $u$, and the matter density $\varrho$.

The Einstein-Euler system for barotropic fluids has been studied by many authors.
In fact, most of the results cited in the 
introduction deal with this case. The interested reader can consult
the papers \cite{DainNagy, BrauerKarp1, Re2, HRU, GST, MakinoSpherical, ReSch} 
and the monographes \cite{Lich, Anile, C}, as well as the references therein.

Similar to the well-known case of vacuum Einstein 
equations \cite{C_thesis}, when addressing the 
solvability of the barotropic Einstein-Euler system, one has to investigate its constraints and 
make suitable choices for the spaces where solutions will be sought. Since we shall deal with the more 
general case of fluids that are not necessarily barotropic, we postpone this discussion for
the time being, turning our attention first to some thermodynamic tools that will be 
necessary in the sequel. We shall return to the barotropic case in section \ref{further}.

\subsection{Thermodynamic properties of perfect fluids.}
There are important scenarios where it is known that the pressure is 
not determined by the matter density only. 
These include the so-called polytropic fluids, which
are used in several stellar models 
(see e.g. \cite{C, Anile, WeinbergGR, ZelNov} and references therein).
In such cases, (\ref{barotropic}) 
has to be replaced by a more general equation of state involving 
other physical quantities, which are usually assumed to be of 
thermodynamic nature, as we now describe.

As in many situations in General Relativity, it is important to identify
those quantities measured by a local inertial observer. Let $r$ be the 
\Em{rest mass (or energy)\footnote{Due to the equivalence of mass and energy, and our choice
of units with $c=1$, we shall use the terms mass and energy interchangeably.}
 density}\footnote{Also called \Em{particle number density} \cite{FriRen}, 
\Em{baryon number density} \cite{Anile} or yet \Em{proper material density}
 \cite{Lich}.}, defined as the mass density measured in the 
local rest frame. It is assumed that this quantity obeys the conservation law
\begin{gather}
 \nabla_\al(r u^\al) = 0,
\label{rest_mass_conservation}
\end{gather}
which states that mass is (locally) conserved. Notice that on dimensional grounds,
$\frac{1}{r}$ is the specific (i.e., per unit of mass) volume. The difference between 
the mass density $\varrho$ and the rest mass density $r$ is 
by definition the \Em{specific  internal energy} $\ie$
(as measured in the local rest frame):
\begin{gather}
\varrho = r(1 + \ie). 
\label{internal_energy}
\end{gather}
Therefore, as $p$, $r$ and $\varrho$, $\ie$ is a real valued function on $\cU \subseteq \cM$. 
In the context of relativistic fluids, the 
relation (\ref{internal_energy}) has been first introduced by Taub
\cite{Taub} and has been widely used since \cite{FriRen,C,Anile,Lich}.

For non-barotropic fluids, further relations among 
the variables of the problem have to be introduced in order 
to have a well-determined system of equations. It is natural to assume that 
the first law of thermodynamics holds, i.e., 
\begin{gather}
 d \ie = \Temp \, ds -  p \, dv,
\nonumber
\end{gather}
where $\Temp$ is the absolute 
temperature, $s$ the specific entropy, and $v$ the specific volume.
These are all non-negative real valued functions on $\cU$. 
In light of (\ref{internal_energy}) and $v=\frac{1}{r}$, the first law can 
be written as\footnote{The case of interest in this paper is when the matter density does
not vanish, hence $\frac{1}{r}$ is well defined because of (\ref{internal_energy}). The case 
of vanishing $\varrho$ is, however, important, as explained in the introduction. }
\begin{gather}
 d \varrho = \frac{p + \varrho}{r} \, dr + r \Temp \, ds.
\label{first_law}
\end{gather}
We now turn to the generalization of (\ref{barotropic}), i.e., to the 
appropriate equation of state that has to be provided.
Although we presently have seven thermodynamic variables, namely, 
$p$, $\varrho$, $r$, $\ie$,  $\Temp$, $v$ and $s$, the above equations imply 
relations among them. In fact, for a perfect fluid, only two of such quantities
are independent \cite{Anile}, with the remaining ones determined by relations depending
exclusively on the nature of the fluid. On physical grounds, we should assume that 
such relations are invertible, what renders the question of which two thermodynamical
quantities are the independent ones a matter of preference. We shall assume henceforth that 
$r$ and $s$ are independent. We thus postulate an equation of state of the form
\begin{gather}
 \varrho = \ccP(r,s),
\label{eq_of_state}
\end{gather}
where $\ccP$ is a given smooth function, invertible in the sense that we can solve for 
$r = r(\varrho,s)$ and $s = \varrho(\varrho, r)$.
From (\ref{first_law}) and (\ref{eq_of_state}) we then obtain
\begin{gather}
 p = r \frac{\partial \varrho}{\partial r} - \varrho,
\label{pressure}
\end{gather}
and 
\begin{gather}
 \Temp = \frac{1}{r} \frac{\partial \varrho}{\partial s}.
\label{temperature}
\end{gather}
The validity of (\ref{first_law}) along with (\ref{Euler_1}) implies
\begin{gather}
 \Temp u^\al \nabla_\al s = -\frac{1}{r^2} (p + \varrho) \nabla_\al( r u^\al).
\label{derivation_loc_adiabatic}
\end{gather}
Therefore, in light of (\ref{rest_mass_conservation}), and assuming that the temperature is not zero
(which is consistent with (\ref{temperature}) and (\ref{eq_of_state})), 
\begin{gather}
 u^\al \nabla_\al s = 0.
\label{loc_adiabatic}
\end{gather}
In other words, the entropy is conserved along the flow lines of the fluid. When
(\ref{loc_adiabatic}) holds, the motion of the fluid is said to be 
\Em{locally adiabatic}. A fluid is said to be \Em{isentropic} if $s=$constant, and 
\Em{non-sentropic} otherwise. 
% see comments on FriRen p. 50

\begin{rema}
 If (\ref{rest_mass_conservation}) is not assumed, then from (\ref{derivation_loc_adiabatic}) 
it only follows that
\begin{gather}
 \nabla_\al(r u^\al) \leq  0,
\nonumber 
\end{gather}
and
\begin{gather}
 u^\al \nabla_\al s \geq 0.
\nonumber 
\end{gather}
Although these two inequalities have a clear physical 
interpretation --- from the point of view of 
inertial  observers the rest mass cannot increase and the entropy cannot decrease ---
if (\ref{rest_mass_conservation}), 
and therefore (\ref{loc_adiabatic}), is not assumed, the motion of the fluid 
is underdetermined. 
%Lich p. 29
\end{rema}
The first law of thermodynamics, equation (\ref{first_law}), will  always be
 assumed to hold, 
 therefore, because of (\ref{derivation_loc_adiabatic}),  we shall work with (\ref{loc_adiabatic})
rather than (\ref{rest_mass_conservation}) to form our system of equations.

\subsection{The Einstein-Euler-Entropy system.\label{EEE_system_section}}
The \Em{Einstein-Euler-Entropy system} is the system comprised of equations
(\ref{Einstein_eq}), 
(\ref{Euler_1}), (\ref{Euler_2}), (\ref{eq_of_state}), (\ref{pressure}), 
(\ref{loc_adiabatic}),
subject to the constraint
$u^\al u_\al = 1$, with $T_{\al\be}$ given by (\ref{perfect_fluid_source}), and $\ccP$ in
(\ref{eq_of_state}) a given smooth invertible function. The unknowns to be 
determined are the metric $g$, the four-velocity $u$, 
the rest mass density $r$, and the specific entropy $s$.

For previous works on the Einstein-Euler-Entropy system, 
the reader can consult the monographes \cite{Lich, Anile, C}.

Our focus here is on the Cauchy  problem, therefore we need  to state
what the initial data are. This should consist of the usual initial
data for the Einstein equations and initial data for the matter fields. These have 
to satisfy suitable constraint equations, as we now recall. Because of our signature convention,
the metric $g_0$ on the initial three slice $\Si$ is negative-definite, a fact which we stress
by calling the pair $(\Si, g_0)$ a \Em{negative Riemannian manifold}.

\begin{defi} A \Em{pre-initial data set for the Einstein equations},
or \Em{pre-initial data set} for short, is a triple 
$(\Si, g_0, \kappa)$, where $(\Si, g_0)$ is a three-dimensional negative Riemannian 
manifold and $\kappa$ a symmetric two-tensor on $\Si$. A \Em{development}
of a pre-initial data set $(\Si, g_0, \kappa)$ is a space-time $(\cM,g)$ which admits
an isometric embedding of $(\Si, g_0)$, with $\kappa$ being the second fundamental form
of the embedding. A development $(\cM,g)$ is called an \Em{Einsteinian development}
if $(\cM, g)$ is an Einsteinian space-time, i.e., Einstein equations are satisfied on $\cM$.
\label{development}
\end{defi}

Let $(\cM, g)$ be an Einsteinian development of $(\Si, g_0, \kappa)$. Then 
the fact that $\Si$ is embedded into $\cM$ with 
second fundamental form $\kappa$ and Einstein equations are satisfied implies
that the following identities hold\footnote{See e.g. \cite{HE} for a proof. The plus sign 
on $\mu$ is due to our signature convention; some authors define $J$ with a negative sign.} on $\Si$,
\begin{gather}
R_{g_0} - |\kappa |_{g_0}^2 + (\tr_{g_0} \kappa)^2 + 2  \mu= 0,
\label{hamilonian_constraint}
\end{gather}
known as the \Em{Hamiltonian constraint}, and 
\begin{gather}
\dive_{g_0} \big (\kappa - (\tr_{g_0}  \kappa )  g_0\big  ) +  J = 0,
\label{momentum_constraint}
\end{gather}
known as the \Em{momentum constraint}, with $\mu$ and $J$  defined  by
\begin{gather}
\mu := \K T(n, n),
\label{definition_mu}
\end{gather}
and
\begin{gather}
J := \K T(n, \cdot),
\label{definition_J}
\end{gather}
where $n$ is the unit normal of $\Si$ inside $\cM$,  $T$ is the stress-energy tensor,
and $T(n, \cdot)$ is viewed as a one-form on $\Si$.
In the above, $R_{g_0}$, $| \cdot|_{g_0}$, $\tr_{g_0}$, $\dive_{g_0}$ are, respectively, the scalar curvature, the pointwise norm, the trace, and the divergence, all 
with respect to the metric $g_0$. 
Notice that definition \ref{development} and equations
(\ref{hamilonian_constraint}), (\ref{momentum_constraint}), (\ref{definition_mu}),
 (\ref{definition_J}) are general, in the sense that they do not assume that $T$
is the stress-energy tensor of a perfect fluid. When $T$ has the form 
(\ref{perfect_fluid_source}), then (\ref{definition_mu}) and (\ref{definition_J}) become
\begin{gather}
\mu = \K (p + \varrho)(1-|\pi(u)|_{g_0}^2) - \K p,
\label{definition_mu_perfect}
\end{gather}
and\footnote{As usual we identify vectors and co-vectors, since 
$J$ is given as a vector in (\ref{definition_J_perfect}) but as a one form in (\ref{definition_J}).}
\begin{gather}
J = \K  (p + \varrho)  \sqrt{1-|\pi_g(u)|_{g_0}^2 }\pi_g(u),
\label{definition_J_perfect}
\end{gather}
where $\pi_g:  \left. T\cM \right|_\Si \rar T\Si$ is the orthogonal projection onto the
tangent bundle of $\Si$.

From (\ref{definition_mu_perfect}) and (\ref{definition_J_perfect}) it is seen that, 
additionally to $(\Si, g_0, \kappa)$, to solve  the Einstein-Euler-Entropy system, one
has to prescribe $\varrho$, $p$, and a vector field $v$ (which will satisfy $v=\pi(u)$ 
once solutions are obtained). However, as we adopt the point of view 
that the independent thermodynamic
variables are $s$ and $r$, we do not prescribe $p$ and $\varrho$ directly. Rather, 
$r$ and $s$ are given as initial data, and then $\varrho$ and $p$ are determined 
on $\Si$ by
(\ref{eq_of_state}) and (\ref{pressure}), respectively. 

\begin{defi}
An \Em{initial data set for the Einstein-Euler-Entropy system}
is a $7$-uple 
$(\Si, g_0, \kappa$, $r_0, \scr_0, v, \ccP)$, where
$(\Si, g_0)$ is a three-dimensional negative Riemannian manifold
endowed with a symmetric 
two-tensor  $\kappa$;  $r_0$ and $\scr_0$ are non-negative real valued 
functions\footnote{We write $\scr_0$ instead of $s_0$ for the entropy at time zero
to avoid confusion with the quantity $s_\al$ introduced below.}
$r_0, \scr_0: \Si \rar \RR_+$; $\ccP: \RR \times \RR \rar \RR_+$
is a smooth invertible function as in (\ref{eq_of_state});
and  $v$ is a vector field on $\Si$;
such that the \Em{constraint equations} (\ref{hamilonian_constraint}) and (\ref{momentum_constraint}) are 
satisfied, with $\mu$ and $J$ given by
\begin{gather}
\mu = \K (p_0 + \varrho_0)(1-|v|_{g_0}^2) - \K p_0, 
\nonumber
\end{gather}
and
\begin{gather}
J = \K  (p_0 + \varrho_0)  \sqrt{1-|v|_{g_0}^2} \, v,
\nonumber 
\end{gather}
where $\varrho_0 = \varrho(r_0,\scr_0)$ and $p_0 = p_0(r_0,\scr_0)$  are given by
(\ref{eq_of_state}) and (\ref{pressure}), respectively.
\end{defi}
In order to state the main result, we need to impose further conditions that are either
of physical nature or are required in order to apply certain analytic techniques.

One quantity of physical relevance in non-relativistic fluid dynamics is the speed of
acoustic waves. For relativistic fluids, we define the \Em{sound speed} $\nu$ by
\begin{gather}
 \nu^2 = \left( \frac{\partial p}{\partial \varrho } \right)_s = \frac{ r }{ p + \varrho} \frac{ \partial p }{\partial r},
\label{sound_speed}
\end{gather}
where, as usual in Thermodynamics, $\left( \frac{\partial }{\partial x} \right)_y$ means 
treating $y$ as a constant when taking derivatives with respect to $x$. 
$\nu^2$ is well defined in that, for physically relevant equations of state, pressure
cannot decrease as $\varrho$ increases.
The fact 
that (\ref{sound_speed}) is correctly interpreted as the speed of acoustic waves
follows from analyzing the characteristic manifolds of the system formed by equations
(\ref{Euler_1}), (\ref{Euler_2}), (\ref{loc_adiabatic}) and (\ref{eq_of_state}) with a given
background metric; we refer the reader to \cite{Anile, Lich} for details. 
%Anile, p. 17 to 22; Lichnerowicz, p.36

We shall require that the following inequalities hold:
\begin{subequations}{\label{positivity_conditions}}
\begin{align}
 \nu^2 > 0,
\label{sound_speed_positive} \\
\nu^2 \leq 1, 
\label{sound_speed_causal} \\
\frac{p+ \varrho}{r} > 0.
\label{enthalpy_positive}
\end{align}
\end{subequations}
(\ref{sound_speed_positive}) and (\ref{sound_speed_causal}) assert 
the physical requirements that the sound
speed is positive and does not exceed the speed of light\footnote{It is not difficult
to see that equality in (\ref{sound_speed_causal}) happens if, and only if, the four-divergence 
of $u$ vanishes, i.e., $\nabla_\al u^\al =0$ (see e.g. \cite{C}). When this happens, the fluid is called \Em{incompressible}. Recall
that incompressibility is defined in non-relativistic physics as the vanishing of the three-velocity, in which
case sound waves travel with infinity speed.}. 
The interpretation of (\ref{enthalpy_positive}) is that \Em{specific enthalpy} of the fluid, defined as
$\frac{p+\varrho}{r}$, is positive. 
In view of (\ref{sound_speed}), it should be noticed 
that (\ref{sound_speed_positive}) together 
with (\ref{enthalpy_positive}) excludes the
possibility of pressure-free matter. Our techniques can be easily adapted
to allow such a case nonetheless, leading to a theorem similar to \ref{main_theorem} below 
(see section \ref{further}).

When the initial slice $\Si$ is compact and the functions involved continuous,
inequalities (\ref{sound_speed_positive}) and (\ref{enthalpy_positive}) automatically imply
that on $\Si$
\begin{gather}
\nu^2 \geq c,\, \text{ and } \frac{p+ \varrho}{r} \geq c,
\label{bound_from_below}
\end{gather}
for some constant $c>0$. These conditions are needed 
to apply certain existence results for first order symmetric hyperbolic systems. When $\Si$
is non-compact, our techniques still apply, but without 
a bound of the form (\ref{bound_from_below}) and 
further assumptions on the initial data, the size of 
the time interval where the solution exists can tend to zero near the asymptotic region of 
$\Si$. We therefore assume (\ref{bound_from_below}),
briefly commenting on more general situations in section \ref{further}.
Also, for $\Si$ non-compact, in order to accommodate conditions at infinity which 
are not too restrictive,
we shall employ the uniformly local Sobolev spaces\footnote{Other function spaces can be
used. In particular,  the ``little $\ell_p$-Sobolev spaces", $\ell_p(H^s)$, could be employed, 
these 
being, in fact, better suited for treating more general data on non-compact manifolds, see e.g.
\cite{TriebelBook}. }
 $H^s_{ul}$ originally introduced by 
Kato \cite{KatoQL}. Their definition and basic properties are recalled in the 
appendix \ref{ul_Sobolev_appendix}; although it comes as
no surprise that $H^s_{ul}$ and $H^s$ are equivalent (as Banach spaces) when $\Si$ is compact.
It is assumed that the metric employed to define the spaces $H^s_{ul}(\Si)$
is equivalent, up to the sign convention, to the metric $g_0$.
%see Rendall's paper on Makino's variable.

Yet another difficulty which arises in the non-compact case is that $u$ can become
arbitrarily close to the boundary of the light-cone in the asymptotic region. This would lead
to a breakdown of the positivity of certain matrices required for our proofs, again
implying that the time interval on which the solutions exist cannot be made uniform.
To prevent this, we impose a bound on the size of the initial tangent velocity $v$. To see
that 
such a bound gives the desired control over $u$ near the initial hypersurface, simply notice that
in normal coordinates at a point $p$ on the hypersurface with $\frac{\partial}{\partial x^0}$ 
normal 
to $\Si$, the set of unit length future-directed
vectors on $T_p \cM$ is given as usual by the upper sheet of the time-like hyperboloid 
which is asymptotic to the light-cone.

We are now in a position to state our main result.

\begin{theorem}
Let $(\Si, g_0, \kappa, r_0, \scr_0, v, \ccP)$ be an initial data set for the 
Einstein-Euler-Entropy system, with 
$g_0 \in H_{ul}^{s+1}(\Si)$,
 $\kappa$, $v$,
 $r_0$, $\scr_0\in H_{ul}^{s}(\Si)$, where 
 $s > \frac{3}{2} + 2$.
Suppose further that  $\scr_0 \geq 0$,
 $r_0 \geq c_1$, $\frac{p_0 + \varrho_0}{r_0} \geq c_1$,
$\nu_0^2 = \nu^2(r_0,\scr_0) \geq c_1$, 
and $|v|_{g_0} \geq -c_2$, for some constants $c_1,\, c_2>0$.
 Then there 
exists an Einsteinian 
development $(\cM,g)$  of $(\Si, g_0, \kappa, r_0, \scr_0, v, \ccP)$
which is a perfect fluid source, with 
$\cM$ diffeomorphic to $[0,T_E] \times \Si$ for some real number $T_E>0$.
Moreover,
$g \in C^0([0,T_E], H_{ul}^{s+1}(\Si)) \cap 
C^1([0,T_E], H_{ul}^{s}(\Si)) \cap C^2([0,T_E], H_{ul}^{s-1}(\Si))$, the 
matter density $\varrho$ and the pressure $p$ of the fluid source are 
given by (\ref{eq_of_state}) and
(\ref{pressure}), respectively, where the functions $r$ and $s$ in 
(\ref{eq_of_state})-(\ref{pressure}) belong to 
$C^0([0,T_E], H_{ul}^{s}(\Si)) \cap C^1([0,T_E], H_{ul}^{s-1}(\Si))$, satisfy $r > 0$, 
$s \geq 0$, and are such that
$\left. r \right|_{\Si} = r_0$,  
$\left. s \right|_{\Si} = \scr_0$, and $\nu^2 = \nu^2(r, s) > 0$. Furthermore, denoting by $u$ the unit time-like vector field of 
the fluid source, we have that $u \in C^0([0,T_E], H_{ul}^{s}(\Si)) \cap C^1([0,T_E], H_{ul}^{s-1}(\Si))$
 and $\pi_g(u) = v$, where
$\pi_g:  \left. T\cM \right|_\Si \rar T\Si$ is the orthogonal projection onto the
tangent bundle of $\Si$.
\label{main_theorem}
\end{theorem}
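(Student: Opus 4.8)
The plan is to follow the three-step strategy announced in the introduction: (i) construct suitable initial data for the reduced (gauged) first-order symmetric hyperbolic system of Friedrich out of the geometric data $(\Si, g_0, \kappa, r_0, \scr_0, v, \ccP)$; (ii) solve the reduced system via the standard $H^s_{ul}$ existence theory for first-order symmetric hyperbolic systems; and (iii) prove that the gauge is propagated, so that a solution of the reduced system is in fact a solution of the original Einstein-Euler-Entropy system, and then read off the regularity and constraint statements. I will now sketch each step.

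\textbf{Step 1: initial data for the reduced system.} I would first choose an $H^{s+1}_{ul}$-orthonormal frame $\{e_a\}$ on $\Si$ adapted to $g_0$ (possible by a partition-of-unity / Gram--Schmidt argument, with the $H^s_{ul}$-algebra and multiplication properties keeping track of regularity); this, together with the future unit normal, gives a frame at $\Si$ whose Ricci rotation coefficients lie in $H^s_{ul}$ because $g_0 \in H^{s+1}_{ul}$ and $\kappa \in H^s_{ul}$. The fluid four-velocity $u$ at $\Si$ is then determined algebraically by $\pi_g(u) = v$ together with $u^\al u_\al = 1$, which is solvable precisely because $|v|_{g_0} \geq -c_2$ keeps $u$ in the interior of the light cone (this is the role of the bound on $v$ flagged in the text); here I use that $\sqrt{1-|v|^2_{g_0}}$ is bounded below. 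Using (\ref{eq_of_state})--(\ref{pressure}) we get $\varrho_0, p_0 \in H^s_{ul}$, and the positivity hypotheses $r_0 \geq c_1$, $\frac{p_0+\varrho_0}{r_0} \geq c_1$, $\nu_0^2 \geq c_1$ guarantee that the coefficient matrices of the reduced system are, at $t=0$, symmetric with uniformly positive-definite ``time'' part (which is where $\nu^2>0$ and $\nu^2 \leq 1$, i.e. (\ref{positivity_conditions}), enter). Finally one verifies that the constraint quantities built from these data — the zero-quantities measuring the failure of the gauge and of the Einstein/Euler/entropy equations — vanish on $\Si$; this uses the Hamiltonian and momentum constraints (\ref{hamilonian_constraint})--(\ref{momentum_constraint}) in the precise form given in the definition of an initial data set.

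\textbf{Step 2: well-posedness of the reduced system.} With the data in hand, the reduced system is a quasilinear first-order symmetric hyperbolic system whose coefficients depend smoothly on the unknowns in the region where the light-cone and positivity conditions hold. I would invoke the standard local existence, uniqueness and continuous-dependence theorem for such systems in $H^s_{ul}$ (Kato's theory, as referenced in the excerpt), valid for $s > 3/2 + 1$; the stronger threshold $s > 3/2 + 2$ in the statement is what is needed so that the metric, which involves one more derivative than the first-order frame variables, lands in $C^0 H^{s+1}_{ul} \cap C^1 H^s_{ul} \cap C^2 H^{s-1}_{ul}$, and so that the fluid variables lie in $C^0 H^s_{ul} \cap C^1 H^{s-1}_{ul}$. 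Continuity in time of the solution plus the open conditions $r>0$, $\nu^2>0$, $|u|$ timelike persist on a (possibly shorter) interval $[0,T_E]$; in the noncompact case it is the lower bounds (\ref{bound_from_below}) together with the uniformly local norms that make $T_E$ uniform, exactly as discussed before the theorem. The diffeomorphism $\cM \cong [0,T_E]\times\Si$ comes from the fact that the Lagrangian/frame gauge prescribes the flow and the time function by construction.

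\textbf{Step 3: propagation of the gauge.} This is the heart of the proof and the step I expect to be the main obstacle. One forms the collection of ``zero-quantities'' — the differences between the reduced equations and the genuine Einstein-Euler-Entropy equations, the torsion/frame-compatibility quantities, and the deviation of the entropy and rest-mass transport — and shows, using the Bianchi identities, the second Bianchi identity for the curvature variables, and the first law (\ref{first_law}) together with (\ref{loc_adiabatic}), that these zero-quantities satisfy a \emph{homogeneous} linear first-order symmetric hyperbolic system. Since, by Step 1, they all vanish on $\Si$, uniqueness for that linear system forces them to vanish on all of $[0,T_E]\times\Si$; hence the solution of the reduced system is a genuine Einsteinian development that is a perfect fluid source, $\pi_g(u)=v$, and $r|_\Si=r_0$, $s|_\Si=\scr_0$. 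The delicate points here are: carefully setting up the zero-quantity system so that it genuinely closes (no derivative loss), checking that the entropy/rest-mass relations interact correctly with the frame-gauge equations, and — in the $H^s_{ul}$ setting — making sure the uniqueness argument for the homogeneous linear system is available at the regularity in question; I would handle regularity issues by, if necessary, a density/approximation argument or by differentiating the zero-quantity system and applying energy estimates. The details of this step follow Friedrich's construction (\cite{Fri, FriRen}) and are where the bulk of the bookkeeping lies; with it done, all assertions of Theorem \ref{main_theorem} — the development, the regularity classes, the boundary values, and $\nu^2(r,s)>0$ — are obtained by restriction and by the open-ness of the positivity conditions.
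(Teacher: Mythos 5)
Your proposal follows the same three-step strategy as the paper (determination of the reduced initial data via an adapted frame and Lorentz transformation, $H^s_{ul}$ symmetric-hyperbolic well-posedness of the reduced system, and propagation of the gauge via a homogeneous subsidiary system that vanishes on $\Si$ by the constraints), so the approach is essentially the one taken in sections \ref{initial_data_section}--\ref{propagation_gauge}.

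Two small inaccuracies worth flagging. First, you write that $\nu^2 \leq 1$ enters in the positivity of the time part of the coefficient matrix; the paper explicitly notes (section \ref{further}) that (\ref{sound_speed_causal}) is never used, and the positivity of $M^t$ on $\Si$ depends only on $\nu^2 > 0$ together with the bound on $|v|_{g_0}$ keeping $u$ inside the light cone (equivalently, keeping the slices $\Si_t$ space-like, so that $g_t$ stays negative definite). Second, your sentence about the regularity threshold is too quick: applying the general quasilinear symmetric hyperbolic theory naively yields all components at the lowest regularity ($H^{s-1}_{ul}$), and the split regularities in proposition \ref{prop_well_pos_reduded} — $e^{\bar A}_{\ms \bar\al}$ in $H^{s+1}_{ul}$, the connection coefficients and $\varrho, r, s$ in $H^{s}_{ul}$, and $E_{\bar\al\bar\be}, B_{\bar\al\bar\be}, s_\al$ in $H^{s-1}_{ul}$ — are obtained only via a bootstrap over ``mildly coupled'' sub-systems, relying on the Fischer--Marsden result for linear symmetric hyperbolic systems whose lower-order terms are less regular than the data. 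Neither point invalidates your outline, but both are load-bearing for the regularity claims in theorem \ref{main_theorem} and would need to be made precise in a full write-up.
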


\begin{rema}
Notice that the pointwise inequalities of theorem \ref{main_theorem} all make sense in that
$s > \frac{3}{2} + 2$ implies that the quantities involved are continuous. 
As discussed above, when $\Si$ is compact, the hypotheses involving $c_1$ can
be relaxed by assuming that those quantities are simply greater than zero, $H^s_{ul}$ can be replaced
by the ordinary Sobolev spaces and the bound on $|v|_{g_0}$ is automatically satisfied
(recall that $g_0$ is negative definite).
\end{rema}

\section{The Einstein-Euler-Entropy system in the frame formalism\label{frame_formalism}.}
In this section, we shall use the so-called frame formalism to 
write a different set of equations for the Einstein-Euler-Entropy system.
It will be shown in section \ref{proof_main_theorem_section} that solutions
to the new system imply existence of solutions to the original 
Einstein-Euler-Entropy equations. 
As in most analytic investigations of the Einstein equations, the point of 
view is essentially local as a consequence of the phenomenon 
of finite propagation speed. Thus a chart $U$ should be implicitly understood whenever
coordinates are involved. If $\Si$ is a space-like three surface, or a \Em{slice} for short, 
then we shall use a slight abuse of notation and still denote by $\Si$ the set
$\Si \cap U$.

In the frame formalism, the information about the metric is encoded in an orthonormal
frame $\{ e_\mu \}_{\mu=0}^3$, where the frame is related to the coordinate basis by
\begin{gather}
 e_\mu = e^A_{\ms \mu} \frac{\partial}{\partial x^A}.
\label{frame_coefficients}
\end{gather}
\begin{notation}
From now on, unless otherwise specified, all tensor fields will be expressed in the 
orthonormal frame $\{ e_\mu \}_{\mu=0}^3$, with Greek letters used to indicate the components
of such fields in the basis $\{ e_\mu \}_{\mu=0}^3$. A bar over an index, e.g., $\bar{\al}$, indicates
that it can take only the values $1,2$ or $3$, with summation of barred indices being only over $1,2,3$
as well. The only exception for our choice of basis will be for the frame itself, which
will be given in terms of the frame coefficients $e^A_{\ms \mu}$ in (\ref{frame_coefficients}).
As  in (\ref{frame_coefficients}), capital Latin letters range from $0$ to $3$ and will be used 
to denote components with respect to 
the coordinate basis; a bar, e.g., $\bar{A}$, indicates restriction to $1, 2$ or $3$.
Indices are still raised and lowered with the metric $g$, as usual.
\label{notation_barred_indices}
\end{notation}

By construction, in terms of the orthonormal
frame $\{ e_\mu \}_{\mu=0}^3$, the metric $g$ is always represented by the Minkovski metric
$g_{\al\be} = \operatorname{diag}(1, -1, -1, -1 )$. The relation to the metric 
in the basis $\{ \frac{\partial}{\partial x^A} \}_{A=0}^3$ is given by
\begin{gather}
 g^{AB} = e^A_{\ms \al} e^B_{\ms \be} g^{\al\be}.
\label{def_metric_frame_coeff}
\end{gather}
The 
\Em{connection coefficients} $\Ga_{\al \ms \be}^{\ms \ga} $ are defined via
\begin{gather}
 \nabla_\al e_\be = \Ga_{\al \ms \be}^{\ms \ga} e_\ga.
 \label{conn_coeff}
\end{gather}
The condition that $\nabla$ is compatible with the metric then takes the form
\begin{gather}
 \Ga_{\al \ms \be}^{\ms \mu} g_{\mu \ga} + \Ga_{\al \ms \ga}^{\ms \mu} g_{\mu \be} = 0.
\label{metric_compatible}
\end{gather}
One of the key ingredients of the formalism we shall employ is to treat
the connection coefficients as unknowns in their own right.
As $\Ga_{\al \ms \be}^{\ms \ga}$ is given in terms of first derivatives of the metric,
treating them as independent variables allows us to express the original 
Einstein-Euler-Entropy system, which involves second derivatives of $g$, as a first
order system\footnote{This argument  only presents the basic heuristic intuition of 
the method. In fact, although the metric will not be one of the basic unknowns,
our system would be third order in the metric if written in terms of it. This is 
because the system will involve first derivatives of the Weyl tensor, see equations (\ref{Friedrichs_tensor}),
(\ref{div_Friedrichs_tensor}) and (\ref{div_Friedrichs_tensor_eq}). It should be noticed that 
this is not an oddity of our 
formalism. If one tries to approach the 
Einstein-Euler-Entropy system in the usual formalism of second order equations, 
it cannot 
be directly solved as it stands; a 
quasi-diagonalization process has then to be carried out, leading 
to a system that is also third order in the metric \cite{C2}. }.

Symmetry (\ref{metric_compatible}) will be assumed throughout. In other words,
we shall only need evolution equations for $24$  independent 
$\Ga$'s\footnote{Due to (\ref{metric_compatible}), there are $\frac{n^2(n-1)}{2}$ independent 
components of $\Ga_{\al \ms \be}^{\ms \ga}$ in $n$ 
space-time dimensions, $n=4$ in our case.}
 %see also comments on Wald p. 50
with the remaining
components explicitly defined via (\ref{metric_compatible}).

Since it is not possible to decide from the connection coefficients alone whether $\nabla$ is the
Levi-Civita connection of $g$ \cite{Schmidt}, further conditions will be necessary. The requirement
that the connection is torsion-free, along with the (once contracted) Bianchi identities and 
the standard decomposition of the Riemann curvature tensor in terms of the Weyl and Schouten 
tensors, will be imposed as further equations of motion of the system. In this regard,
we introduce the tensor $d^\al_{\ms\be\ga\de}$ ($d$ for ``decomposition'') defined as
\begin{gather}
 d^\al_{\ms\be\ga\de} := R^\al_{\ms\be\ga\de} - W^\al_{\ms \be \ga \de} - 
g^\al_{\ms [\ga} S_{\de]\be}^{} + g_{\be[\ga}^{}S_{\de]}^{\ms\al},
\label{decomposition}
\end{gather}
where $[\al\,\be]$ means that the indices are anti-symmetrized;
$W^\al_{\ms \be\ga\de}$ is the Weyl tensor; $S_{\al\be}$ is the Schouten tensor,
given by
\begin{gather}
 S_{\al\be} := R_{\al\be} - \frac{1}{6} R g_{\al\be},
\label{Schouten}
\end{gather}
and $R^\al_{\ms\be\ga\de}$ is the Riemann tensor, which can be written as
\begin{align}
\begin{split}
 R^\al_{\ms \be\ga\de} = & \,  e_\ga(\Ga_{\de \ms \be}^{\mss \al}) - 
e_\de(\Ga_{\ga \ms \be}^{\mss \al}) - \Ga_{\mu \ms \be}^{\mss \al}(\Ga_{\ga \ms \de}^{\mss \mu} - \Ga_{\de \ms \ga}^{\mss \mu})
+ \Ga_{\ga \ms \mu}^{\mss \al} \Ga_{\de \ms \be}^{\mss \mu} \\
& - \Ga_{\de \ms \mu}^{\mss \al} \Ga_{\ga \ms \be}^{\mss \mu}.
\end{split}
\label{definition_Riemann}
\end{align}
By construction, $d^{\al}_{\ms\be\ga\de}$ possesses the usual 
symmetries of the Riemann tensor, since such symmetries are shared by
$W^\al_{\ms \be \ga \de}$ 
and 
$-g^\al_{\ms [\ga} S_{\de]\be}^{} + g_{\be[\ga}^{}S_{\de]}^{\ms\al}$.
Recall that the \Em{torsion} of the connection is the tensor $\cT$ defined via
\begin{gather}
 \cT_{\al \msm \be}^{\mss \mu} e_\mu = -[e_\al,e_\be] + (\Ga_{\al \ms \be}^{\mss \mu} - \Ga_{\be \ms \al}^{\mss \mu} )e_\mu,
\nonumber
\end{gather}
where $[\cdot,\cdot]$ is the usual commutator of two vector fields. Notice that 
\begin{gather}
 \cT_{\al \msm \be}^{\mss \mu} = - \cT_{\be \msm \al}^{\mss \mu}.
\label{torsion}
\end{gather}
Aiming at the Bianchi identities, we define the \Em{Friedrich tensor} by
\begin{gather}
 F^{\al}_{\ms \be\ga\de} := W^\al_{\ms \be \ga \de} - 
g^\al_{\ms [\ga} S_{\de]\be}^{},
\label{Friedrichs_tensor}
\end{gather}
and let
\begin{gather}
 F_{\al\be\ga} := \nabla_\mu F^\mu_{\ms \al \be \ga}.
\label{div_Friedrichs_tensor}
\end{gather}
To understand the role of $F_{\al\be\ga}$, consider for simplicity the case of vacuum.
Then the once-contracted Bianchi identity
\begin{gather}
\nabla_\mu R^{\mu}_{\mss \al\be\ga} = \nabla_\be R_{\al \ga} - \nabla_\ga R_{\al \be},
\nonumber
\end{gather}
combined with Einstein equations (\ref{Einstein_eq}) yields $F_{\al\be\ga} = 0$ whenever
the decomposition of the Riemann tensor holds, i.e., $d^\al_{\mss\be\ga\de} = 0$.

The last definition we need to introduce is 
\begin{gather}
q_\al :=  (p + \varrho) u^\mu \nabla_\mu u_\al - \nu^2 (p + \varrho) u_\al \nabla_\mu u^\mu  - \nabla_\al p.
\label{q_al}
\end{gather}
$q_\al$ will replace  (\ref{Euler_2}) in the new set of equations. To motivate 
this, suppose
that we have a solution to the Einstein-Euler-Entropy system. Then
differentiating (\ref{eq_of_state}) in the direction of $u$, using (\ref{pressure}),  
(\ref{rest_mass_conservation}),  (\ref{loc_adiabatic}), and (\ref{sound_speed}) yield
\begin{gather}
u^\al \nabla_\al p = -(p + \varrho) \nu^2 \nabla_\al u^\al.
\nonumber
\end{gather}
Using this into (\ref{Euler_2}) then gives $q_\al = 0$.

Tracing Einstein equations
(\ref{Einstein_eq}) gives
\begin{gather}
 R = -\K T,
\label{trace_Einstein_eq}
\end{gather}
where $T$ is the trace of the stress-energy tensor. Using 
(\ref{Einstein_eq}) and (\ref{trace_Einstein_eq}) into (\ref{Schouten}) produces
\begin{gather}
 S_{\al\be} = \K( T_{\al\be} - \frac{1}{3} T g_{\al\be}),
\label{Einstein_eq_equivalent}
\end{gather}
which is an equivalent way of writing the Einstein equations.

We can now define the \Em{Einstein-Euler-Entropy system in the frame formalism}
as the system comprised of
\begin{subnumcases}{\label{EEE_frame_system}}
\text{equations (\ref{Euler_1}), (\ref{rest_mass_conservation}), (\ref{eq_of_state}), (\ref{pressure}), 
(\ref{loc_adiabatic}), (\ref{sound_speed}),
(\ref{Einstein_eq_equivalent})}, &  \\
q_\al = 0, & \label{q_al_equation} \\
\cT_{\al \msm \be}^{\mss \mu} = 0, & \label{no_torsion_eq} \\
 d^\al_{\ms\be\ga\de} = 0, & \label{decomposition_eq} \\
F_{\al\be\ga} = 0, & \label{div_Friedrichs_tensor_eq} 
\end{subnumcases}
subject to the constraint
$u^\al u_\al = 1$, with $T_{\al\be}$ given by (\ref{perfect_fluid_source}), 
and where $q_\al$, $\cT_{\al \msm \be}^{\mss \mu}$, 
$d^\al_{\ms\be\ga\de}$ and $F_{\al\be\ga}$ are given
by (\ref{q_al}), (\ref{torsion}), (\ref{decomposition}),
and (\ref{div_Friedrichs_tensor}), respectively.
The unknowns to be determined are the frame coefficients $e^A_{\ms \al}$, 
the connection coefficients 
$\Ga_{\al \ms \be}^{\ms \ga}$, 
the Weyl tensor $W^\al_{\ms \be\ga\de}$, the four-velocity $u$, 
the rest mass density $r$, the specific entropy $s$, and
the matter density $\varrho$. 
The usual symmetries of $W^\al_{\ms \be\ga\de}$, and those of 
$\Ga_{\al \ms \be}^{\ms \ga}$ determined by (\ref{metric_compatible}),
are explicitly assumed,  therefore the system is written for only the $20$ independent components 
of the Weyl tensor\footnote{In $n$ space-time dimensions, 
$W^\al_{\ms \be\ga\de}$ has $\frac{1}{12}n(n+1)(n+2)(n-3)$ independent components.
As explained in remark \ref{remark_trace_W}, the trace-free condition is not assumed in the system, thus there is
an extra freedom of $\frac{1}{2}n(n+1)$  components.}
 %see Weinberg Gravitation and Cosmology p. 146 for the Physics
 and the $24$ independent connection coefficients\footnote{All the symmetries and which components enter in the
 system are described in section \ref{reduced_section}, where we also
 write the system in a more explicit form.}.
 
 \begin{rema}
 Obviously, when attempting to solve (\ref{EEE_frame_system}), it is not yet
 known that $W$ is the Weyl tensor. In particular, the trace-free condition must
  be demonstrated.
\label{remark_trace_W}
\end{rema}

One way to motivate the choice of the Weyl tensor as one of the unknowns is to  
consider the 
simpler case of vacuum. Then, the 
validity of $d^\al_{\ms\be\ga\de} = 0$ gives 
$R^\al_{\ms \be\ga\de} = W^\al_{\ms \be\ga\de}$ which, upon contraction, produces
the vacuum Einstein equations. Since the Bianchi identities are necessary conditions for 
the solvability of Einstein equations, we also have
$F_{\be\ga\de} = \nabla_\al W^\al_{\ms \be\ga\de} 
= \nabla_\al R^\al_{\ms \be\ga\de} = 0$ in the case of vacuum.
We see in this way that the usual vacuum Einstein equations can be recovered from 
the Weyl tensor\footnote{Other choices of variables are, of course, possible. Choquet-Bruhat and
York derived a different system --- also based on the Bianchi identities --- where the Riemann tensor is 
one of the unknowns of the problem \cite{ChoquetYork}.}.

Equation (\ref{div_Friedrichs_tensor_eq}) is sometimes referred to as the ``Bianchi equation",
and its importance in General Relativity has been long recognized. 
It figures in the Newman-Penrose's spin formalism and has been used in the study
of massless fields \cite{Penrose_zero}, including gravitational radiation \cite{Sachs}.
Friedrich has employed it extensively, initially to obtain energy estimates 
in terms of the Bel-Robinson tensor \cite{Friedrich1, Friedrich2, Friedrich3}, and later to
derive several hyperbolic reductions for the Einstein equations, both in 
the vacuum case \cite{FriHypGauge, FriHypRed} and coupled to matter
\cite{Fri, FriRen}, with some of these results extended in \cite{MagHyd}. 
Friedrich and Nagy also used (\ref{div_Friedrichs_tensor_eq}) in their study of the 
initial-\emph{boundary} value problem in General Relativity \cite{FriNagy}.
Using a different point of view 
than Friedrich, the Bianchi equation was used to derive first order symmetric hyperbolic reduced Einstein equations that 
treat the Riemann tensor as one of the unknowns 
by Anderson, Choquet-Bruhat and
York, both in the 
vacuum and matter 
cases\footnote{By the time of the completion of this manuscript, we learned
of \cite{Vamsi}, where the formulation of 
Anderson, Choquet-Bruhat and
York is used to give yet another proof of short-time 
existence for the Einstein-Euler-Entropy system.}
\cite{ChoquetYorkAnderson,ChoquetYork}.
Finally, equation (\ref{div_Friedrichs_tensor_eq}) has also been employed (in a rather elaborated
fashion though) by Christodoulou and Klainerman in their impressive proof of the global 
non-linear stability of Minkovski space.

\subsection{Gauge fixing.}

The system (\ref{EEE_frame_system}) is overdetermined. In order to obtain a reduced system, which 
is determined and hyperbolic in a precise sense, a  gauge choice has to be made.
A specific decomposition of the Weyl tensor, suitable for our gauge choice, will also be necessary.

Put 
\begin{gather}
\pi_{\al\be} := g_{\al\be} - u_\al u_\be,
\nonumber
\end{gather}
so $\pi_{\al\be}$ is the metric induced on the space orthogonal to $u$, with projection given
by  
\begin{gather}
\pi_\al^{\mss \be} = g_\al^{\mss \be} - u_\al u^\be 
\nonumber
\end{gather}
(recall that indices are always raised with $g_{\al\be}$).

\begin{rema}
Despite the terminology, in general, $\pi_{\al\be}$ and $\pi_\al^{\mss \be}$
will not agree with the metric and projection on the $t=\,$constant space-like slices,
because such slices are not expected to be orthogonal to $u$ except in some special cases. 
In particular, 
$\left. \pi_{\al\be} \right|_\Si$ does not, in general, agree with $g_0$.
\end{rema}

Letting $\ve_{\al\be\ga\de}$ be the totally anti-symmetric tensor, with the usual
convention $\ve_{0123} = +1$, we define 
\begin{gather}
\ve_{\al\be\ga} := \ve_{\mu \nu \si \tau} u^\mu \pi_\al^{\mss \nu} \pi_\be^{\mss \si}
\pi_\ga^{\mss \tau}.
\nonumber
\end{gather}
Expanding out, it is easy to obtain
\begin{gather}
\ve^{\mu \al \be } \ve_{\mu \ga \de} = -2 \pi^{\al}_{\ms [\ga } \pi^{\be}_{\mss\de ]},
\label{identity_ve_1}
\end{gather}
and
\begin{gather}
\ve^{\mu \nu \al } \ve_{\mu \nu \be } = -2 \pi^{\al}_{\mss \be}.
\label{identity_ve_2}
\end{gather}

Recall that an observer moving with a (time-like) $4$-velocity $V$ measures an electric 
and a magnetic field  given by $\cF_{\al \be} V^\be$ and $\cF_{\al\be}^* V^\be$, respectively, 
where $\cF$ is the Maxwell stress tensor and $\cF^*$ its dual. By analogy, it is 
customary to introduce the following.

\begin{defi}
Let $W_{\al \be\ga\de}$ be the Weyl tensor and 
$W^*_{\al \be\ga\de}$ its dual, given by
\begin{gather}
W^*_{\al \be\ga\de} = \frac{1}{2} \ve_{\ga\de}^{\mss \mss \mu \nu}  W_{\al \be \mu \nu}^{}.
\nonumber
\end{gather}
The $u$-\Em{electric} and $u$-\Em{magnetic parts} of $W^\al_{\ms \be\ga\de}$
are defined by
\begin{gather}
E_{\al\be} : = W_{\mu \nu \si \tau} u^\mu u^\si \pi_\al^{\mss \nu} \pi_\be^{\mss \tau},
\nonumber
\end{gather}
and
\begin{gather}
B_{\al\be} : = W^*_{\mu \nu \si \tau} u^\mu u^\si \pi_\al^{\mss \nu} \pi_\be^{\mss \tau},
\nonumber
\end{gather}
respectively.
\label{u_elec_u_mag_def}
\end{defi}
It follows that $E_{\al\be}$ and $B_{\al\be}$ are symmetric and trace-free. 
With the help of (\ref{identity_ve_1}) and (\ref{identity_ve_2}), it is not difficult 
to verify the following.

\begin{lemma}
The following decompositions of the Weyl tensor and its dual hold: 
\begin{align}
\begin{split}
W_{\al \be\ga\de}  & =   2 \Big ( 
\pi_{\be [\ga } E_{\de ] \al} 
- u_\be u_{ [\ga } E_{\de ] \al} 
- \pi_{\al [\ga} E_{\de] \be} 
+  u_\al u_{ [\ga} E_{\de] \be}
 \Big ) \\
 %p.  91ff
& - 2 \Big( u_{[\ga} B_{\de] \mu} \ve^{\mu}_{\mss \al \be} + 
u_{[\al } B_{\be]\mu} \ve^{\mu}_{\mss \ga \de} \Big ),
%p. 91ff
\end{split}
\label{decomp_W}
\end{align}
and
\begin{align}
\begin{split}
W^*_{\al \be \ga \de} =& \,  2 u_{ [ \al } E_{\be ] \mu} \ve^{\mu}_{\mss \ga \de}
- 4 E_{\mu [\al}^{} \ve_{\be] \ms  [\ga }^{\msm \mu } u_{\de ]}^{} 
- 4 u_{ [ \al } B_{\be ] [ \ga } u_{\de]} \\
& - B_{\mu \nu} \ve^{\mu}_{\mss \al \be} \ve^{\nu}_{\mss \ga \de}.
\end{split}
\label{decomp_W_dual}
\end{align}
\label{decomp_W_lemma}
\end{lemma}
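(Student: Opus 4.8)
The plan is to establish (\ref{decomp_W}) by a direct $1+3$ splitting of the Weyl tensor along $u$, and then to derive (\ref{decomp_W_dual}) by dualizing the result. First I would use the pointwise identity $g_\al^{\mss\mu} = u_\al u^\mu + \pi_\al^{\mss\mu}$ in each of the four slots of $W_{\al\be\ga\de}$, writing $W_{\al\be\ga\de} = g_\al^{\mss\mu}g_\be^{\mss\nu}g_\ga^{\mss\si}g_\de^{\mss\tau}W_{\mu\nu\si\tau}$ and expanding into sixteen terms. Since $W_{\mu\nu\si\tau}$ is antisymmetric in $\mu\nu$ and in $\si\tau$, every term carrying two factors of $u$ contracted into the same antisymmetric pair vanishes; the survivors are precisely those with at most one $u$ in each pair, and I would sort them into four groups according to whether there are (a) no $u$'s, (b) one $u$ in the first pair only, (c) one $u$ in the second pair only, or (d) one $u$ in each pair.

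Group (d) is immediate: $W_{\mu\nu\si\tau}u^\mu u^\si\pi_\be^{\mss\nu}\pi_\de^{\mss\tau} = E_{\be\de}$ by definition, and after using the pair symmetry and the two antisymmetries the four terms of this group assemble into $2u_\al u_{[\ga}E_{\de]\be} - 2u_\be u_{[\ga}E_{\de]\al}$. In groups (b) and (c), the object $W_{\mu\nu\si\tau}u^\mu\pi_\be^{\mss\nu}\pi_\ga^{\mss\si}\pi_\de^{\mss\tau}$ is orthogonal to $u$ in every index and antisymmetric in the pair $\ga\de$; any such tensor is the $\ve$-dual (in the pair $\ga\de$) of a $2$-tensor carrying a $\be$-index, and a short computation with (\ref{identity_ve_1})--(\ref{identity_ve_2}) together with the definition of $B_{\al\be}$ through $W^*$ identifies that $2$-tensor with $B$; these two groups then contribute $-2u_{[\ga}B_{\de]\mu}\ve^{\mu}_{\mss\al\be}$ and $-2u_{[\al}B_{\be]\mu}\ve^{\mu}_{\mss\ga\de}$. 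Group (a), the purely spatial block $W_{\mu\nu\si\tau}\pi_\al^{\mss\mu}\pi_\be^{\mss\nu}\pi_\ga^{\mss\si}\pi_\de^{\mss\tau}$, inherits all the algebraic symmetries of a curvature tensor in the three-dimensional space orthogonal to $u$, so it is reconstructed from its trace (the vanishing of the Weyl tensor in three dimensions); since $W$ is totally trace-free, projecting $0 = g^{\mu\si}W_{\mu\nu\si\tau} = (u^\mu u^\si + \pi^{\mu\si})W_{\mu\nu\si\tau}$ shows this trace equals $-E_{\al\be}$ (no scalar term survives because $E$ is trace-free), and the standard three-dimensional curvature formula then yields $2\pi_{\be[\ga}E_{\de]\al} - 2\pi_{\al[\ga}E_{\de]\be}$. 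Adding the four groups gives exactly (\ref{decomp_W}).

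For (\ref{decomp_W_dual}) I would apply $W^*_{\al\be\ga\de} = \frac{1}{2}\ve_{\ga\de}^{\mss\mss\mu\nu}W_{\al\be\mu\nu}$ directly to the right-hand side of (\ref{decomp_W}). This uses the splitting of the spacetime volume form, $\ve_{\al\be\ga\de} = u_\al\ve_{\be\ga\de} - u_\be\ve_{\al\ga\de} + u_\ga\ve_{\al\be\de} - u_\de\ve_{\al\be\ga}$ (the totally spatial part vanishing for dimensional reasons), the contraction identities (\ref{identity_ve_1})--(\ref{identity_ve_2}), and the facts recorded just before the lemma that $E_{\al\be}$ and $B_{\al\be}$ are symmetric, trace-free, and orthogonal to $u$. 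Dualizing each of the two $E$-blocks and the two $B$-blocks of (\ref{decomp_W}) term by term, the $u$-contractions annihilating several pieces, and regrouping, produces (\ref{decomp_W_dual}).

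All of this is elementary but bookkeeping-intensive; the only step that is not purely mechanical is the treatment of the purely spatial block, where one must invoke the reconstruction of a three-dimensional curvature-type tensor from its trace and combine it with the global trace-freeness of $W$. I expect that to be the main obstacle — everything else is index algebra organized by counting factors of $u$.
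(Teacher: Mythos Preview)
Your proposal is correct and matches the paper's approach: the paper does not spell out a proof at all but simply remarks, just before the lemma, that the identities are ``not difficult to verify'' with the help of (\ref{identity_ve_1}) and (\ref{identity_ve_2}). Your $1{+}3$ splitting via $g_\al^{\mss\mu}=u_\al u^\mu+\pi_\al^{\mss\mu}$, the grouping by the number of $u$'s, and the use of the three-dimensional reconstruction for the purely spatial block are exactly the standard way to carry out that verification, and your treatment of (\ref{decomp_W_dual}) by dualizing (\ref{decomp_W}) using the splitting of $\ve_{\al\be\ga\de}$ and the identities (\ref{identity_ve_1})--(\ref{identity_ve_2}) is precisely what the paper has in mind.
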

Decompositions (\ref{decomp_W}) and (\ref{decomp_W_dual}) allow the Weyl tensor to be eliminated
from the system (\ref{EEE_frame_system}) in favor of its electric and magnetic components, producing 
yet another set of equations where
$E_{\al\be}$ and $B_{\al\be}$ will be unknowns; see section \ref{reduced_section}.

Until now, $\{ e_\mu \}_{\mu=0}^3$ has been an arbitrary frame with respect to which
all tensor fields have been written. In particular, there is no relation so far between  
$u$ and the frame other than the general fact that $u$ can be decomposed in this base, i.e., 
$u = u^\mu e_\mu$. A \Em{gauge choice} in our formalism will be a specific choice of frame --- very 
much in the same way that a choice of gauge in the coordinate formalism corresponds to 
a determined choice of coordinates, e.g., wave coordinates. 

\begin{defi}
Let $(\cU,g,u)$ be a fluid source. An orthonormal frame $\{ e_\mu \}_{\mu=0}^3$ in $\cU$ is 
called a \Em{fluid source gauge} if it satisfies
\begin{gather}
e_0 = u, 
\label{gauge_e_0}
\end{gather}
with the remaining $\{ e_\mu \}_{\mu = 1}^3$ being 
\Em{Fermi propagated} along $e_0$. By definition, this means
\begin{gather}
\langle \nabla_{e_0} e_{\bar{\al}}, e_{\bar{\be}} \rangle = 0,
\label{Fermi_transport}
\end{gather}
for $\bar{\al},\bar{\be} = 1,2,3$ (recall our 
conventions in notation \ref{notation_barred_indices}).
\label{def_fluid_source_gauge}
\end{defi}
%p. 72 and 23
Condition (\ref{Fermi_transport}) means that $\{ e_{\bar{\al}} \}$ remains orthogonal along the 
time-flow of $u$. Roughly speaking, this can be understood as similar to parallel transport, 
with the important difference that $\{ e_{\bar{\al}} \}$ does not remain parallel but is allowed
to ``rotate about time-axis given by $u$."

Let $t$ be a parameter for the flow lines of $u$ and $\Si$ be a slice. Then
there exists a foliation of a neighborhood of $\Si$ by 
leaves $\Si_t = \{ t = \text{ constant} \}$,
which are space-like and diffeomorphic to $\Si$. Let  
$\{ \fracbarAline  \}_{\bar{A}=1}^3$
be tangent vectors on $\Si$ associated with coordinates $\{ x^{\bar{A}} \}_{\bar{A}=1}^3$. The
coordinates $\{ x^{\bar{A}} \}_{\bar{A}=1}^3$
can be dragged along $u$ to give coordinates on $\Si_t$, and since $u$ is time-like, 
$u \not\in \operatorname{span}\{ \fracbarAline  \}$. Setting 
$ \frac{ \partial }{\partial x^0} := \frac{\partial }{\partial t} \equiv u$ then 
gives a basis 
$\{ \frac{ \partial }{\partial x^0}, \fracbarAline  \}_{\bar{A}=1}^3
 \equiv \{  \frac{\partial}{\partial x^A }\}_{A=0}^3$
for the space-time tangent space, with coordinates $\{ x^A \}_{A=0}^3$. From now on, 
it will be assumed that whenever a fluid source gauge is employed, the coordinates are
arranged as just described, unless stated otherwise.

The following is a simple consequence of our choice of gauge and
the fact that $g$ is represented by the Minkovski metric in 
the frame formalism.

\begin{lemma}
In fluid source gauge, it holds that 
\begin{gather}
e^A_{\ms 0} = \de^A_{\ms 0}, \, u^\al = \de^\al_{\mss 0} , \, 
\Ga_{0 \ms \bar{\be}}^{\mss \bar{\al}} = 0, 
\nonumber \\ 
\pi_{\al\be} = g_{\al\be} - \de_{0\al}\de_{0\be}, 
\,
\pi_{\bar{\al}\bar{\be}} = -\de_{\bar{\al}\bar{\be}},
\, 
\pi_{\al}^{\mss \be} = \de_{\al}^{\ms\be} - \de_{0\al} \de^{0\be}.
\nonumber
\end{gather}
\label{gauged_quantities_lemma}
\end{lemma}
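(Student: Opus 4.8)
The plan is to verify each identity in Lemma \ref{gauged_quantities_lemma} directly from the defining conditions of the fluid source gauge (Definition \ref{def_fluid_source_gauge}) together with the fact that in the orthonormal frame the metric components are constant, $g_{\al\be} = \operatorname{diag}(1,-1,-1,-1)$, so that all connection coefficients satisfy the skew-symmetry relation $\Ga_{\al \ms \be}^{\mss \mu} g_{\mu\ga} = -\Ga_{\al \ms \ga}^{\mss \mu} g_{\mu\be}$ coming from (\ref{metric_compatible}).

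First I would establish $e^A_{\ms 0} = \de^A_{\ms 0}$. By (\ref{gauge_e_0}) we have $e_0 = u$, and by the coordinate arrangement described just before the lemma the time coordinate is chosen so that $\frac{\partial}{\partial x^0} = \frac{\partial}{\partial t} \equiv u$; combining these with (\ref{frame_coefficients}), $e_0 = e^A_{\ms 0}\frac{\partial}{\partial x^A} = u = \frac{\partial}{\partial x^0}$, forcing $e^A_{\ms 0} = \de^A_{\ms 0}$. Next, $u^\al = \de^\al_{\mss 0}$ is immediate: since $e_0 = u$ and $\{e_\mu\}$ is a frame, the frame components of $u$ are $u = u^\mu e_\mu = e_0$, i.e.\ $u^\mu = \de^\mu_{\mss 0}$. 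For the connection coefficients, Fermi transport (\ref{Fermi_transport}) says $\langle \nabla_{e_0} e_{\bar\al}, e_{\bar\be}\rangle = 0$ for $\bar\al,\bar\be \in \{1,2,3\}$; writing $\nabla_{e_0} e_{\bar\al} = \nabla_0 e_{\bar\al} = \Ga_{0\ms\bar\al}^{\mss\ga} e_\ga$ from (\ref{conn_coeff}) and pairing with $e_{\bar\be}$ gives $\Ga_{0\ms\bar\al}^{\mss\ga} g_{\ga\bar\be} = 0$; since $g$ is diagonal in the frame and $\bar\be$ runs over spatial indices, this yields $\Ga_{0\ms\bar\al}^{\mss\bar\be}(-1) = 0$, hence $\Ga_{0\ms\bar\al}^{\mss\bar\be} = 0$ as claimed. (One should note that the unit-norm condition $u_\al u^\al = 1$ forces $\Ga_{0\ms 0}^{\mss 0} = 0$ and metric compatibility then relates the mixed components, but only the stated components $\Ga_{0\ms\bar\be}^{\mss\bar\al}$ are asserted to vanish.)

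Finally, the three formulas for $\pi$ follow by substituting $u^\al = \de^\al_{\mss 0}$, equivalently $u_\al = g_{\al\mu}u^\mu = g_{\al 0} = \de_{0\al}$ (using the frame metric), into the definitions $\pi_{\al\be} = g_{\al\be} - u_\al u_\be$ and $\pi_\al^{\mss\be} = g_\al^{\mss\be} - u_\al u^\be$: one gets $\pi_{\al\be} = g_{\al\be} - \de_{0\al}\de_{0\be}$ directly, then restricting to spatial indices and using $g_{\bar\al\bar\be} = -\de_{\bar\al\bar\be}$, $\de_{0\bar\al} = 0$ gives $\pi_{\bar\al\bar\be} = -\de_{\bar\al\bar\be}$, and likewise $\pi_\al^{\mss\be} = \de_\al^{\ms\be} - \de_{0\al}\de^{0\be}$ since $g_\al^{\mss\be} = \de_\al^{\ms\be}$ and $u^\be = \de^\be_{\mss 0} = \de^{0\be}$ in the frame. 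There is no real obstacle here: the entire lemma is a bookkeeping consequence of the gauge conditions, and the only point requiring a moment's care is keeping track of which index positions are ``frame'' versus ``coordinate'' in the identity $e^A_{\ms 0} = \de^A_{\ms 0}$ and being careful that the vanishing of connection coefficients is only for the spatial-spatial block $\Ga_{0\ms\bar\be}^{\mss\bar\al}$ dictated by (\ref{Fermi_transport}), not for all $\Ga_{0\ms\be}^{\mss\al}$.
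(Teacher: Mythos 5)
Your proof is correct and is exactly the routine verification the paper has in mind: the paper states this lemma without proof, calling it ``a simple consequence of our choice of gauge and the fact that $g$ is represented by the Minkovski metric in the frame formalism,'' and your argument simply fills in those steps. All the identities, index bookkeeping, and the restriction of the vanishing to the spatial block $\Ga_{0\ms\bar{\be}}^{\mss\bar{\al}}$ dictated by (\ref{Fermi_transport}) are handled correctly.
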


\subsection{The reduced system of equations.\label{reduced_section}}
In this section, we investigate a reduced system for equations 
(\ref{EEE_frame_system}). As mentioned in the introduction, its derivation, which 
follows that of 
\cite{FriHypRed, Fri, FriRen}, is given in the appendix \ref{derivation_appendix}.

In light of the various symmetries 
involved, only equations for some components of the tensors involved are needed. 
First let us  determine them.

Taking the inner products of (\ref{conn_coeff}) with $e_\de$ and using
(\ref{metric_compatible}) gives
\begin{gather}
\left. \Ga_{\al \ms \be}^{\ms \be} \right|_{ \nosum } = 0,
\label{gamma_2_indices}
%p. 48 and 127, no gauge needed
\end{gather}
\begin{gather}
\Ga_{\bar{\al}\ms \bar{\be}}^{\ms 0} = \Ga_{\bar{\al} \ms 0}^{\ms \bar{\be}},
%p. 120 and 127, no gauge needed.
\label{gamma_sym_two_bar}
\end{gather}
\begin{gather}
\Ga_{\bar{\al}\ms\bar{\be}}^{\ms\bar{\ga} } = -\Ga_{\bar{\al}\ms\bar{\ga}}^{\ms\bar{\be} },
\label{gamma_sym_spatial}
\end{gather}
where  $\left. (\cdot) \right|_{\nosum}$ indicates that there is no sum over repeated
indices, and 
from (\ref{metric_compatible}) and (\ref{gamma_2_indices}) we have
\begin{gather}
\Ga_{0\ms0}^{\mss\al} = \Ga_{0 \ms \al}^{\mss 0}.
%p. 105 and 127, no gauge needed
\label{gamma_sym_two_0}
\end{gather}
Identities (\ref{gamma_2_indices}), (\ref{gamma_sym_two_bar}), 
(\ref{gamma_sym_spatial}) and 
(\ref{gamma_sym_two_0}) hold for any frame. Fixing the gauge allows further simplifications.
From now on, we shall assume that our frame is a fluid source gauge, unless stated
otherwise.

From lemmas \ref{decomp_W_lemma}, \ref{gauged_quantities_lemma} and the 
symmetries of the Weyl tensor,
\begin{gather}
E_{\al 0} \, (=E_{0\al} ) = 0 , \,  B_{\al 0} \, (=B_{0\al}) = 0, 
\label{E_0_B_0_gauge} \\ 
%p. 31B
E_{\bar{\al}\bar{\be}} = W_{0\bar{\al}0\bar{\be}}, \,
B_{\bar{\al}\bar{\be}} = \frac{1}{2} 
W^{}_{0\bar{\al}\mu \nu}\ve_{0\bar{\be}}^{\mss\mss \mu \nu}.
\label{E_bar_B_bar_gauge}
%p. 31BB
\end{gather}
From (\ref{E_0_B_0_gauge}) and lemmas \ref{decomp_W_lemma} and
\ref{gauged_quantities_lemma},
 it is seen that in fluid source gauge the Weyl tensor is determined from
 $E_{\bar{\al}\bar{\be}}$ and $B_{\bar{\al}\bar{\be}}$. It is therefore
 enough to consider equations for these quantities, whereas from
 lemma \ref{gauged_quantities_lemma}  and identities 
(\ref{gamma_2_indices}), (\ref{gamma_sym_two_bar}), 
(\ref{gamma_sym_spatial}) and 
(\ref{gamma_sym_two_0}), we obtain that it suffices to have
evolution equations for connection coefficients 
$\Ga_{\bar{\al}\ms \bar{\ga} }^{\ms \bar{\be} }$, $\Ga_{0 \ms \bar{\al}}^{\mss 0}$,
$\Ga_{\bar{\al}\ms \bar{\be} }^{\ms 0}$ and frame coefficients 
$e^A_{\ms \bar{\al}}$.

To write the reduced system, the introduction of yet another variable is needed. Write
\begin{gather}
s_\al =\nabla_\al s.
\label{s_al_def}
\end{gather}
As was done for the quantities $e^A_{\ms \beta}$, 
$\Ga_{\al \msm \be}^{\ms \ga}$, $E_{\al\be}$, $B_{\al\be}$, we shall treat
$s_\al$ as an unknown, with the relation (\ref{s_al_def}) to be demonstrated after solutions
are obtained.

Define the operator $\D_\mu$ which acts on fields of 
the orthogonal complement of $e_0$ by
\begin{gather}
 \D_\mu A_{\bar{\al}_1 \bar{\al_2} \cdots \bar{\al}_\ell }
= \pi_\mu^{\mss \nu} \nabla_\nu 
A_{\be_1 \be_2 \cdots \be_\ell }
\pi_{\bar{\al}_1}^{\msb \be_1} \pi_{\bar{\al}_2}^{\msb \be_2} \cdots \pi_{\bar{\al}_\ell}^{\msb \be_\ell}.
\label{spatial_derivative}
\end{gather}
Then
\begin{gather}
\D_\mu \pi_{\bar{\al}\bar{\be}} = 0 \, \text{ and } \, 
\D_\mu \ve_{\bar{\al}\bar{ \be}\bar{ \ga} } = 0.
\nonumber
\end{gather}
Although, in fluid source gauge, it then follows that 
\begin{gather}
 \D_{\bar{\mu}} A_{\bar{\al}_1 \bar{\al_2} \cdots \bar{\al}_\ell }
=  \nabla_{\bar{\mu}} A_{\bar{\al}_1 \bar{\al_2} \cdots \bar{\al}_\ell },
 \nonumber
\end{gather}
we shall explicitly write $\D_{\bar{\mu}}$ to facilitate 
the comparison with appendix 
\ref{derivation_appendix}.

We can now investigate the reduced system, whose equations are
% see p. 87-85 for the translation between quantities in CB, Rendall, Friedrich, 
% Anile, Lichnerowicz
%see p. 98
\begin{subequations}{\label{reduced_system}}
\begin{align}
& \partial_t e^A_{\ms \bar{\beta}} + (\Ga_{\bar{\be}\msm 0}^{\ms \bar{\mu}}  - 
 \Ga_{ 0\ms \bar{\be} }^{\ms \bar{\mu} } )e^A_{\ms \bar{\mu}} 
   -  \Ga_{0 \ms \bar{\beta}}^{\ms 0 }  \de^A_{\ms 0} = 0  
 \label{reduced_eq_frame}
  \\ %p. 101 
%%%%%%%%%%%%%%
 & \partial_t \Ga_{\bar{\de}\msm \bar{\beta}}^{\ms \bar{\al}} + 
 \Ga_{\bar{\la}\ms \bar{\beta}}^{\ms \bar{\al}}\Ga_{\bar{\de}\msm 0}^{\ms \bar{\la}} 
 + \Ga_{0\ms 0}^{\mss \bar{\al}} \Ga_{\bar{\de} \ms \bar{\be}}^{\mss 0}  - \Ga_{\bar{\de}\msm 0}^{\mss \bar{\al}} \Ga_{0\msm \bar{\be}}^{\ms 0} - \ve^{\mu \bar{\al}}_{\msb\bar{\be}} B_{\bar{\de}\mu} = 0
 \label{reduced_eq_Ga_bar}
  \\ % p. 104
%%%%%%%%%%%%%%
& \partial_t \Ga_{0 \msm \bar{\al}}^{\ms 0} - \nu^2 e_{\bar{\la}}(\Ga_{\bar{\al}\msm 0}^{\ms \bar{\la}})
+ \Ga_{0\msm \bar{\mu}}^{\ms 0} \Ga_{\bar{\al} \msm 0}^{\ms \bar{\mu}}
-\frac{\partial \nu^2}{\partial s}  \Ga_{\bar{\mu}\msm 0}^{\ms \bar{\mu}} s_{\bar{\al}}
+ \frac{1}{p+\varrho} 
 \nonumber \\
& \hspace{0.5cm}
\left(1 + \frac{r}{\nu^2} \frac{\partial \nu^2}{\partial r} \right)\frac{\partial p}{\partial s}
 \Ga_{\bar{\mu}\msm 0}^{\ms \bar{\mu}} s_{\bar{\al}} 
 +  \left(\frac{p+\varrho}{\nu^2}\left(\frac{\partial^2 p}{\partial \varrho^2}\right)_{\hspace{-0.2cm} s}
   - \nu^2 \right) \Ga_{\bar{\mu}\msm 0}^{\ms \bar{\mu}}\Ga_{0 \msm \bar{\al}}^{\ms 0}
 \nonumber \\
& \hspace{0.5cm}
- \nu^2 \Big [ \Ga_{\mu \msm 0}^{\ms \bar{\la}}(\Ga_{\bar{\al} \ms \bar{\la}}^{\ms \mu} - 
\Ga_{\bar{\la} \ms \bar{\al}}^{\ms \mu}) - \Ga_{\bar{\al} \ms \mu}^{\ms \bar{\la}} 
\Ga_{\bar{\la} \msm 0}^{\ms \mu} 
 - \Ga_{\bar{\la} \ms \mu}^{\ms \bar{\la}} \Ga_{\bar{\al} \msm 0}^{\ms \mu} 
\Big ]
\nonumber \\
& \hspace{0.5cm}
- 
\frac{1}{p+\varrho} \nu^2 \frac{\partial \varrho}{\partial s}\Ga_{\bar{\mu}\msm 0}^{\ms \bar{\mu}} s_{\bar{\al}} 
 = 0 
\label{reduced_eq_Ga_0_0}
\\ %p. 111 and 112
%%%%%%%%%%%%%%
& \nu^2 \partial_t \Ga_{\bar{\al}\ms \bar{\be}}^{\ms 0} 
- \nu^2 e_{\bar{\be}}(\Ga_{0\msm\bar{\al}}^{\ms0} )
- \nu^2 E_{\bar{\al}\bar{\be}}
  -\nu^2 \Big [ \Ga_{\mu \ms \bar{\be} }^{\ms 0}( \Ga_{0 \msm \bar{\al}}^{\ms \mu} -
  \Ga_{\bar{\al} \msm 0}^{\ms \bar{\mu} } )
 \nonumber \\
&  \hspace{0.5cm}
    - \Ga_{0 \msm \bar{\mu}}^{\ms 0} \Ga_{\bar{\al}\ms \bar{\beta}}^{\ms \bar{\mu}} 
  + \Ga_{0 \msm \bar{\mu}}^{\ms 0}( \Ga_{\bar{\al}\ms \bar{\be}}^{\ms\bar{\mu}} -
  \Ga_{\bar{\be}\ms \bar{\al}}^{\ms\bar{\mu}} )
     + 
  \nu^2 \Ga_{\bar{\mu}\msm 0}^{\ms\bar{\mu}}( \Ga_{\bar{\al}\ms \bar{\be}}^{\ms 0} -
  \Ga_{\bar{\be}\ms \bar{\al}}^{\ms 0} )
   \nonumber \\
&   \hspace{0.5cm}  + \frac{1}{p + \varrho}\left( \frac{\partial \varrho}{\partial s} - 
     \frac{1}{\nu^2}\frac{\partial p }{\partial s} \right)
    (\Ga_{0\ms \bar{\al}}^{\ms 0} s_{\bar{\be}} - \Ga_{0 \ms \bar{\be}}^{\ms 0} 
    s_{\bar{\al}} ) 
 \nonumber \\
&  \hspace{0.5cm}    
       -  \frac{1}{6} \K ( \varrho   +  3 p ) \de_{\bar{\al}\bar{\be}}
   \Big ] =   0 
   \label{reduced_eq_Ga_0}
   \\     %p. 114B; see also p 112 and 114, the last line isR^0_{\mss \bar{\be}0\bar{\al}} in p. 112, which we 
   %already replaced 
%%%%%%%%%%%%%%
& \partial_t E_{\bar{\al} \bar{\be}} 
% these vanish by gauge   - \Ga_{0 \msm \bar{\al}}^{\ms \bar{\la}} E_{\bar{\la} \bar{\be}} - \Ga_{0 \msm \bar{\be}}^{\ms \bar{\la}} E_{\bar{\al} \bar{\la}} 
   + E_{\bar{\mu} \bar{\be}} 
   \Ga_{\bar{\al} \msm 0}^{\ms \bar{\mu}}  + E_{\bar{\al} \bar{\mu} }
   \Ga_{\bar{\be} \msm 0}^{\ms \bar{\mu}} + 
   \D_{\bar{\mu}} B_{\bar{\nu} ( \bar{\al}}^{} \ve_{\bar{\be} ) }^{\ms\bar{\mu}\bar{\nu}}
   + 2 \de_{\bar{\mu} \bar{\la} } \Ga_{0 \msm 0}^{\ms \bar{\la}}  \ve^{\bar{\mu}\bar{\nu}}_{\msb (\bar{\al}}
  B_{\bar{\be} ) \bar{\nu}}^{} 
     \nonumber \\
   &  \hspace{0.5cm}
 -3 \de^{\bar{\mu}\bar{\la}} \Ga_{( \bar{\al} \ms |\bar{\la}|}^{\msb 0} E_{\bar{\be} )
   \bar{\mu}}^{}
   - 2\de^{\bar{\mu} \bar{\la}} \Ga_{\bar{\la} \ms (\bar{\al}}^{\ms 0} E^{}_{\bar{\be} )\bar{\mu}} 
 +
   \de_{\bar{\al}\bar{\be}} \de^{\bar{\mu} \bar{\la}} \de^{\bar{\nu} \bar{\si}} 
   \Ga_{\bar{\la} \msm \bar{\si}}^{\ms 0} E_{\bar{\mu} \bar{\nu}}  \nonumber \\ 
& \hspace{0.5cm}  
     +2 \de^{\bar{\mu}\bar{\nu}} \Ga_{\bar{\mu} \msm \bar{\nu}}^{\ms 0} E_{\bar{\al}\bar{\be}} 
   - \frac{1}{4} \K (p + \varrho) (\Ga_{\bar{\al}\msm \bar{\be}}^{\ms 0} +
    \Ga_{\bar{\be}\msm \bar{\al}}^{\ms 0} 
      \nonumber \\
   &  \hspace{0.5cm}
      - \frac{2}{3} \de_{\bar{\al}\bar{\be}}
     \de^{\bar{\mu}\bar{\nu}}
     \Ga_{\bar{\mu} \msm \bar{\nu}}^{\ms 0} ) = 0 
     \label{reduced_eq_E}
     \\ %p. 115 to 117
%%%%%%%%%%%%%%
&  \partial_t B_{\bar{\al} \bar{\be}} 
% these vanish by gauge   - \Ga_{0 \msm \bar{\al}}^{\ms \bar{\la}} B_{\bar{\la} \bar{\be}} - \Ga_{0 \msm \bar{\be}}^{\ms \bar{\la}} B_{\bar{\al} \bar{\la}} 
  + B_{\bar{\mu} \bar{\be}} 
   \Ga_{\bar{\al} \msm 0}^{\ms \bar{\mu}}  + B_{\bar{\al} \bar{\mu} }\Ga_{\bar{\be} \msm 0}^{\ms \bar{\mu}} -
   \D_{\bar{\mu}} E_{\bar{\nu} ( \bar{\al}}^{} \ve_{\bar{\be} ) }^{\ms\bar{\mu}\bar{\nu}}
   - 2 \de_{\bar{\mu} \bar{\la} } \Ga_{0 \msm 0}^{\ms \bar{\la}}  \ve^{\bar{\mu}\bar{\nu}}_{\msb (\bar{\al}}
  E^{}_{\bar{\be} ) \bar{\nu}} 
  \nonumber \\
&   \hspace{0.5cm} 
   - \de^{\bar{\mu} \bar{\la}} \Ga_{\bar{\la}\msm (\bar{\al}}^{\ms 0} B_{\bar{\be}) \bar{\mu}}^{}
  - 2 \de^{\bar{\mu} \bar{\la} }\Ga_{ (\bar{\al} \msm |\bar{\la}| }^{\msm 0} B_{\bar{\be} ) \bar{\mu}}^{}
  + \de^{\bar{\mu} \bar{\nu}} \Ga_{\bar{\mu} \msm \bar{\nu}}^{\ms 0} B_{\bar{\al}\bar{\be} }
  \nonumber \\
&   \hspace{0.5cm}   
  - \Ga_{\bar{\mu}\msm \bar{\nu}}^{\ms 0} B_{\bar{\si} \bar{\la}} \ve^{\bar{\si}\bar{\mu}}_{\msb (\bar{\al}}
  \ve^{\bar{\nu}\bar{\la}}_{\msb \bar{\be})} = 0 
  \label{reduced_eq_B}
  \\ % p. 118
%%%%%%%%%%%%%%%%% 
&
  \partial_t \varrho + (p + \varrho ) \Ga_{\mu \msm 0}^{\ms \mu }= 0  
  \label{reduced_eq_rho}
  \\ % p. 118B 
 %%%%%%%%%%%%%%%%%%
&  \partial_t s = 0  \label{reduced_eq_s} \\ % p. 118B
%%%%%%%%%%%%%%%%%%
&   \partial_t s_\al - (\Ga_{0 \msm \al}^{\ms \mu} - \Ga_{\al \msm 0}^{\ms \mu} )s_\mu = 0 
\label{reduced_eq_s_al}
\\ %p. 53 and 98 
%%%%%%%%%%%%%%%%%%%%
& \partial_t r + r \Ga_{\mu \msm 0}^{\ms \mu} = 0, 
\label{reduced_eq_r}
%p. 11B  
\end{align}
\end{subequations}
where it is understood that in the expressions involving partial derivatives
of $\varrho$ with respect to the matter variables, 
$\varrho$ is to be replaced by $\cP$ (since 
(\ref{eq_of_state}) is not a part of the above system); 
 $p$ and $\nu^2$ are given by (\ref{pressure}) and (\ref{sound_speed}), respectively;
and $(\cdot)_{[\al|\mu|\be]}$
means  anti-symmetrization of the indices
$\al$ and $\be$ only.
The unknowns to be determined are $e^A_{\ms\bar{\al}}$,
$\Ga_{\bar{\al}\ms \bar{\ga} }^{\ms \bar{\be} }$, $\Ga_{0 \ms \bar{\al}}^{\mss 0}$, 
$\Ga_{\bar{\al}\ms \bar{\ga} }^{\ms 0 }$, 
$E_{\bar{\al}\bar{\be}}$, $B_{\bar{\al}\bar{\be}}$,
$\varrho$, $r$, $s$, $s_\al$. It is explicitly assumed that $E$ and $B$ are symmetric,
 therefore equations (\ref{reduced_eq_E}) and (\ref{reduced_eq_B}) are written
only for the independent components of these fields, say $\bar{\al} \leq \bar{\be}$,
%see comments p. 635 Friedrich-Nagy for the relation to fluid source functions
with the remaining components defined by these symmetry relations.
Similarly, all the symmetries (\ref{gamma_2_indices}), (\ref{gamma_sym_two_bar}), 
(\ref{gamma_sym_spatial}) and 
(\ref{gamma_sym_two_0}) and the gauge condition $\Ga_{0\ms \bar{\al}}^{\ms \bar{\be}} = 0$
from lemma \ref{gauged_quantities_lemma} are assumed, with equations (\ref{reduced_eq_Ga_bar}),
(\ref{reduced_eq_Ga_0_0}) and (\ref{reduced_eq_Ga_0}) written only for the independent components
and the remaining ones being defined by these symmetry relations. 
We write the unknowns collectively as a vector
\begin{gather}
z = 
(e^A_{\ms\bar{\al}},
\Ga_{\bar{\al}\ms \bar{\ga} }^{\ms \bar{\be} }, \Ga_{0 \ms \bar{\al}}^{\mss 0}, 
\Ga_{\bar{\al}\ms \bar{\ga} }^{\ms 0 }, 
E_{\bar{\al}\bar{\be}}, B_{\bar{\al}\bar{\be}}, \varrho, r, s, s_\al).
\label{unk_reduced}
\end{gather}

\begin{rema}
In order that some components of $\Ga$, $E$ and $B$ be defined by symmetry, as mentioned above, 
it is necessary that the initial conditions also obey 
such relations. Given arbitrary $\left. E_{\bar{\al}\bar{\be}} \right|_{t=0}$,
$1 \leq \bar{\al} \leq \bar{\be} \leq 3$, one
can in principle always define the remaining $\left. E_{\bar{\al}\bar{\be}} \right|_{t=0}$ by
$E_{\bar{\al}\bar{\be}} = E_{\bar{\be}\bar{\al}}$. However, in the case of interest, $E$ has 
to be determined from an initial data set, in which case all its components 
will be given on the initial slice (see proposition \ref{prop_initial_data} below).
In this situation, the symmetry of $E$ at $t=0$ has to be demonstrated --- and 
only then 
 is one allowed to the write system (\ref{reduced_system}) solely for  
$\{E_{\bar{\al}\bar{\be}}\}_{\bar{\al}\leq\bar{\be}}$
and impose symmetry relations for the remaining components. Similar 
statements hold for the other fields
involving symmetries.
\end{rema}

\section{Proof of theorem \ref{main_theorem}. \label{proof_main_theorem_section}}

\subsection{Initial data. \label{initial_data_section}} In order to address the 
solvability of  the systems (\ref{reduced_system}), we need to provide suitable initial conditions.
These should be determined entirely by the initial data for the Einstein-Euler-Entropy system 
(as this is the set of equations we are ultimately interested in) and our gauge choices. 
We also have to show that the initial data for $E$ and $B$, which are naturally constructed
from the initial data set, are indeed symmetric and trace-free.
Although all of this can be  inferred from 
similar works treating the vacuum and conformal vacuum Einstein equations, as well
as their coupling to the Yang-Mills equations
\cite{FriHypGauge, Friedrich1, Friedrich2, Friedrich3, FriNagy}, an explicit proof does not seem to be available in the 
literature in the case of our system with our gauge choices. It is therefore 
useful to provide it here.

\begin{prop}
Let $\cI = (\Si, g_0, \kappa, r_0, s_0, v, \ccP)$ be an initial data set for the Einstein-Euler-Entropy 
system, with Einsteinian development $(\cM,g)$ that is a perfect fluid source where 
the Einstein-Euler-Entropy system of equations is satisfied. Let 
$\{ e_\al \}_{\al=0}^3$ be a fluid source gauge defined on a coordinate chart $U$ of $\cM$, and 
let $z$ be as in  (\ref{unk_reduced}). Then 
\begin{gather}
\left. z\right|_{\Si \cap U}
\nonumber
\end{gather}
can be written in terms of quantities determined entirely by $\cI$. Furthermore,
$\left. E \right|_{\Si}$ and $\left. B \right|_{\Si}$ are symmetric and trace-free.
\label{prop_initial_data}
\end{prop}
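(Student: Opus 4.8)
The plan is to treat the families of unknowns in $z$ (see (\ref{unk_reduced})) one at a time, showing that each restricts to $\Si$ to an expression built from $\cI$ together with the restriction to $\Si$ of the fluid source gauge, and to dispatch the symmetry and trace-freeness of $\left.E\right|_\Si$, $\left.B\right|_\Si$ at the end. The backbone of every computation is the decomposition of $u$ along $\Si$: writing $n$ for the future unit normal of $\Si$ in $\cM$ and recalling $\pi_g(u)=v$, the normalization $u^\al u_\al=1$ forces $\left.u\right|_\Si=\gamma\,n+v$ with $\gamma=\sqrt{1+|v|_{g_0}^2}$, so $\left.e_0\right|_\Si$ is already expressed through $\cI$; since the $\{e_{\bar\al}\}$ form a $g$-orthonormal basis of the $g$-orthogonal complement of $u$, each $\left.e_{\bar\al}\right|_\Si$ is obtained from a basis tangent to $\Si$ by the boost sending $n$ to $u$, whence all frame coefficients $\left.e^A_{\ms\mu}\right|_\Si$ are determined (with $e^A_{\ms0}=\de^A_{\ms0}$ automatic by Lemma \ref{gauged_quantities_lemma}). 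The matter scalars are immediate — $\left.r\right|_\Si=r_0$, $\left.s\right|_\Si=s_0$, $\left.\varrho\right|_\Si=\ccP(r_0,s_0)$, $p$ from (\ref{pressure}) — and for $s_\al=\nabla_\al s$ the directions tangent to $\Si$ give the gradient of $s_0$, while contracting (\ref{loc_adiabatic}) with $u=\gamma n+v$ gives the transverse component $n^\al\nabla_\al s=-\gamma^{-1}v^\al\nabla_\al s_0$.

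The one genuinely delicate point, which I would treat first, is the fluid expansion $\vartheta:=\left.\nabla_\al u^\al\right|_\Si$, because this scalar enters every connection coefficient containing a $u$-derivative. I would derive a single scalar identity for it on $\Si$. Writing $\nabla_\al u^\al=(\nabla u)(n,n)-\sum_a(\nabla u)(\omega_a,\omega_a)$ in a $g$-orthonormal frame $\{n,\omega_a\}$ adapted to $\Si$, and using the kinematic identity $u^\be\nabla_\al u_\be=0$ (from $\nabla_\al(u^\be u_\be)=0$) to eliminate all genuinely transverse derivatives of $u$ except the single combination $v^\al a_\al$, where $a_\al=u^\mu\nabla_\mu u_\al$ is the acceleration, one reduces the right-hand side to $v^\al a_\al$ plus terms built from the derivatives of $u$ tangent to $\Si$ — hence from $v$, the Levi-Civita connection of $g_0$, and $\kappa$ (which governs $\nabla n$ restricted to $T\Si$). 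On the other hand (\ref{Euler_2}) gives $a_\al=(p+\varrho)^{-1}(\nabla_\al p-u_\al\,u^\mu\nabla_\mu p)$, and the identity $u^\mu\nabla_\mu p=-\nu^2(p+\varrho)\,\nabla_\al u^\al$ (obtained by differentiating (\ref{eq_of_state}) along $u$ and using (\ref{pressure}), (\ref{loc_adiabatic}), (\ref{sound_speed})) makes $v^\al a_\al$ equal to an explicit $\cI$-term plus a multiple of $|v|_{g_0}^2\,\nu^2\,\nabla_\al u^\al$. Collecting, the identity reads $\big(1-\gamma^{-2}|v|_{g_0}^2\nu^2\big)\vartheta=\Phi$ with $\Phi$ completely explicit in $g_0$, $\kappa$, $v$, $r_0$, $s_0$ and their first $\Si$-derivatives. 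The hypotheses now close the argument: $1-\gamma^{-2}|v|_{g_0}^2\nu^2=\gamma^{-2}\big(1+|v|_{g_0}^2(1-\nu^2)\big)\ge\gamma^{-2}>0$ by (\ref{sound_speed_causal}), so $\vartheta$ is determined by $\cI$. I expect this non-circularity check to be the main obstacle; everything after it is bookkeeping.

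With $\vartheta$ known, the acceleration $a_\al$ is explicit, and hence so is $\Ga_{0\ms0}^{\ms\bar\al}=\langle\nabla_u u,e_{\bar\al}\rangle$, which equals $\Ga_{0\ms\bar\al}^{\ms0}$ by (\ref{gamma_sym_two_0}). Next, $\Ga_{\bar\al\ms\bar\be}^{\ms0}=-\langle\nabla_{e_{\bar\al}}u,e_{\bar\be}\rangle=-(\nabla u)_{\bar\al\bar\be}$ is expanded by the same normal/tangential split used for $\vartheta$, every piece now being governed by $v$, $\kappa$, the $\Si$-derivatives of $v$, and $\vartheta$. Finally $\Ga_{\bar\al\ms\bar\be}^{\ms\bar\ga}=\langle\nabla_{e_{\bar\al}}e_{\bar\be},e_{\bar\ga}\rangle$ is obtained by splitting each $e_{\bar\al}$ on $\Si$ into its $n$-part and its $T\Si$-part: the derivatives along $T\Si$ of the (boosted) frame are handled by the Levi-Civita connection of $g_0$ together with $\kappa$, and the derivative along $n$ is handled through $\nabla_n e_{\bar\be}=\gamma^{-1}\big(\nabla_u e_{\bar\be}-\nabla_v e_{\bar\be}\big)$, where $\nabla_u e_{\bar\be}=-a_{\bar\be}\,u$ is precisely the Fermi transport condition (\ref{Fermi_transport}) and $\nabla_v e_{\bar\be}$ is again an $\Si$-tangential derivative.

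It remains to determine $\left.E_{\bar\al\bar\be}\right|_\Si$, $\left.B_{\bar\al\bar\be}\right|_\Si$ and to verify they are symmetric and trace-free. I would first assemble the spacetime Riemann tensor along $\Si$: its purely tangential and its mixed components are given directly by the Gauss and Codazzi equations of $(\Si,g_0,\kappa)$, and its twice-transverse components are recovered from these together with $\left.R_{\al\be}\right|_\Si$, which by the Einstein equations in the form (\ref{Einstein_eq_equivalent}) equals $\K\big(T_{\al\be}-\tfrac{1}{3} T g_{\al\be}\big)$ with $T$ the perfect-fluid tensor (\ref{perfect_fluid_source}) built from the already-determined matter data. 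Feeding this into the decomposition (\ref{decomposition}) with $d^\al_{\ms\be\ga\de}=0$ and the definition (\ref{Schouten}) of the Schouten tensor yields $\left.W^\al_{\ms\be\ga\de}\right|_\Si$, and then $\left.E_{\bar\al\bar\be}\right|_\Si$, $\left.B_{\bar\al\bar\be}\right|_\Si$ follow from (\ref{E_bar_B_bar_gauge}) by contracting with the already-determined frame. Symmetry and trace-freeness are then not additional conditions but automatic: the tensor just built is genuinely the Weyl tensor of $g$ restricted to $\Si$, so it has the pair symmetry $W_{\al\be\ga\de}=W_{\ga\de\al\be}$ and vanishing traces, and combining this with (\ref{E_0_B_0_gauge})--(\ref{E_bar_B_bar_gauge}) gives $E_{\bar\al\bar\be}=E_{\bar\be\bar\al}$, $B_{\bar\al\bar\be}=B_{\bar\be\bar\al}$ and $\de^{\bar\al\bar\be}E_{\bar\al\bar\be}=\de^{\bar\al\bar\be}B_{\bar\al\bar\be}=0$.
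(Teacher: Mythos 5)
Your proposal is correct, and it reaches the conclusion by a genuinely different route from the paper's at the one step that actually matters. The paper introduces the normal-adapted, parallel-propagated frame $\we_\al$ and the Lorentz transformation $\La$ relating it to the fluid frame, and then determines the connection coefficients from the transformation law (\ref{transformation_connection}), extracting the transverse derivatives of $\La$ from the gauge conditions $\Ga_{0 \ms \bar{\be}}^{\ms \bar{\ga}}=0$ and $\Ga_{0\ms 0}^{\mss 0}=0$. You instead decompose $u=\gamma n+v$ directly and isolate the genuinely transverse data --- the acceleration and the expansion $\vartheta=\nabla_\al u^\al$ --- and recover them from the Euler equation (\ref{Euler_2}) together with $u^\mu\nabla_\mu p=-\nu^2(p+\varrho)\vartheta$, arriving at a scalar equation for $\vartheta$ whose coefficient $\gamma^{-2}\big(1+|v|^2(1-\nu^2)\big)$ is positive. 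This is arguably the more robust argument at the crucial point: the acceleration is second-order information that the gauge conditions alone cannot fix (note that $\Ga_{0\ms 0}^{\mss 0}=0$ is the identity $\langle\nabla_u u,u\rangle=0$ and carries no information), so the paper's derivation of $\left.\Ga_{0\ms 0}^{\mss\bar{\xi}}\right|_\Si$ is terse at best, whereas your route makes explicit that the equations of motion, and not only the gauge, enter the determination of the reduced data. The price is that you invert the $\vartheta$-equation using the causality bound (\ref{sound_speed_causal}), a hypothesis the paper asserts in section \ref{further} is never used; your coefficient could degenerate for superluminal equations of state with large $|v|$, so this is a real difference in the hypotheses consumed. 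For $E$ and $B$ the two arguments coincide (Gauss--Codazzi plus the Einstein equations, as in (\ref{Weyl_constraint_1})--(\ref{Weyl_constraint_3})); your observation that symmetry and trace-freeness are automatic because $W$ is the genuine Weyl tensor is valid under the proposition's standing assumption that a development exists, although the paper's attribution of these properties to the constraints (\ref{hamilonian_constraint})--(\ref{momentum_constraint}) is the formulation needed immediately afterwards, when the same formulas are used to \emph{define} a reduced initial data set from $\cI$ before any development is known to exist.
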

\begin{proof}
It will be useful to first express space-time quantities in terms of an adapted frame.
Let $\{ \we_{\bar{\al}} \}^3_{\bar{\al}=1}$ be 
a frame on $\Si$ orthonormal with respect to $g_0$. Denote by $\we_0$ the
future directed unit normal (with respect to $g$) of $\Si$. The frame  
$\{ \we_\al \}^3_{\al=0}$ is extended to $\cM$ by parallel transport in the direction of 
$\we_0$. Let $\{ \wx^{\bar{A}} \}_{\bar{A}=1}^3$ be coordinates on $\Si$. These
coordinates are extended to $\cM$ by dragging them along $\we_0$, so that
they are constant on the integral curves of $\we_0$. Denoting by $\wx^0$ the parameter
of such curves, we obtain that $\{ \wx^A \}_{A=0}^3$ is a coordinate system on $\cM$. The
frame and coordinate basis are related by
\begin{gather}
 \we_\al = \we^A_{\ms \al} \frac{\partial}{\partial \wx^A}.
\nonumber
\end{gather}
By construction, it holds that 
\begin{gather}
\we^A_{\ms 0} = \de^A_{\ms 0}, \, \we^0_{\ms \bar{\al}} = 0.
\nonumber
\end{gather}
Quantities expressed in terms of $\we_\al$ and $\wx^A$ will be denoted 
with a tilde $\widetilde{~}\,$, and the same index convention as of notation
\ref{notation_barred_indices} is assumed. In particular, in the frame $\we_\al$,
the metric $g_0$ is written as
\begin{gather}
\widetilde{g}_0{}_{\bar{\al}\bar{\be}} = \operatorname{diag}(-1,-1,-1),
\nonumber
\end{gather}
and the connection coefficients are given by
\begin{gather}
\nabla_{\we_\al} \we_\be = \wGa_{\al \ms \be}^{\ms \ga} \we_\ga.
\nonumber
\end{gather}
Notice that the space-time metric is represented by the (constant) matrix
$\operatorname{diag}(1,-1,-1,-1)$ in both frames $e_\al$ and $\we_\al$. This allows
us to drop the $\widetilde{~}$ from the metric $\widetilde{g}_{\al\be}$, 
but we still write $\widetilde{g}_{\al\be}$ when we want 
to stress that some expression is written in the $\we_\al$ basis.
(The distinction
has to be maintained though when $g$ is written in the coordinate basis 
$\{\frac{\partial}{\partial \wx^A} \}_{A=0}^3$ and 
$\{\frac{\partial}{\partial x^A} \}_{A=0}^3$.)

Arguing similarly to 
 (\ref{gamma_2_indices})-(\ref{gamma_sym_two_0}),
\begin{gather}
\begin{split}
\wGa_{0 \ms \be}^{\ms \ga} =  
\wGa_{\bar{\al} \ms 0}^{\ms 0} = \wGa_{0 \ms \bar{\be} }^{\ms \ga}  = 0, \\
\wGa_{\bar{\al} \ms \bar{\be} }^{\ms 0} = \wGa_{\bar{\al} \ms 0}^{\ms \bar{\be}} =
-\wk_{\bar{\al}\bar{\be}}.
\end{split}
\label{conn_coeff_tilde_gauge}
\end{gather}
Now we proceed to relate the tilded quantities to $z$. 
Without loss of generality, it can be assumed that\footnote{Then
on $\Si$ it also holds that $g_{\bar{A}\bar{B}} = \widetilde{g}_{\bar{A}\bar{B}}$, but it will
be convenient to write the $\tilde{~}$ in the metric as a way to keep track of the
changes of basis. } 
\begin{gather}
 \fracbarA =
 \frac{ \partial }{\partial \widetilde{x} } {}_{ \hspace{-0.05cm} \bar{A} }
\text{ on } \Si.
\label{coordinate_equal_Si}
\end{gather}
% this is also used in paragraph 2 p. 2320 Fri, phys D.
Write
\begin{gather}
e_{\bar{\al}} = e^0_{\ms\bar{\al}} \frac{\partial}{\partial x^0} + e^{\bar{A}}_{\ms \bar{\al} }
\fracbarA .
\nonumber
\end{gather}
On $\Si$, $e^{\bar{A}}_{\ms \bar{\al} } \fracbarAline$ can be written in terms of $\we_{\bar{\al}}$, 
which in turn is determined by $g_0$. As for $e^0_{\ms \bar{\al}}$, compute 
\begin{align} 
\begin{split}
& \langle e_{\bar{\al}}, e_{\bar{\al}} \rangle
= -
1 =\langle 
 e^{A}_{\ms \bar{\al} }\frac{\partial}{\partial x^A}, 
 e^{B}_{\ms \bar{\al} }\frac{\partial}{\partial x^B} \rangle 
 \\
& =
 (e^0_{\ms\bar{\al}})^2 
  + 2 e^{\bar{A}}_{\ms \bar{\al}} e^0_{\ms \bar{\al}} g_{\bar{A}0} +
 e^{\bar{A}}_{\ms \bar{\al}} e^{\bar{B}}_{\ms \bar{\al}} g_{\bar{A}\bar{B}},
 \end{split}
\label{e_bar_al_squared}
\end{align}
where we have used that $g_{00} = \langle e_0, e_0 \rangle \equiv \langle 
\frac{\partial}{\partial x^0}, \frac{\partial}{\partial x^0} \rangle = 1$.
But since the projection of $u$ onto $\Si$ is $v$,  it holds that 
\begin{gather}
v_{\bar{\al}} = \langle u, \we_{\bar{\al}}  \rangle
= \langle u^B \frac{\partial}{\partial x^B } ,  \we^{\bar{A}}_{\ms \bar{\al}} \fracbarA \rangle
= \we^{\bar{A}}_{\ms \bar{\al}} g_{0\bar{A}} \, \text{ on } \Si,
\label{v_g}
\end{gather}
where  $u^A = \de^A_{\ms 0}$ and (\ref{coordinate_equal_Si}) have been used. 
From (\ref{v_g}), we can now get
$g_{0\bar{A}}$ on $\Si$. Therefore, restricting (\ref{e_bar_al_squared}) 
to $\Si$ allows us to solve for 
for $\left. e^0_{\ms\bar{\al}} \right|_{\Si}$ since $e_{\bar{\al}}$ is space-like
 and unit, with the sign of $e^0_{\ms\bar{\al}}$ being unambiguously defined by our choice
 of orientation. Therefore, $\left. e^0_{\ms \bar{\al}} \right|_\Si$ is also determined by 
the initial data.
Because $e^A_{\ms 0} = \de^A_{\ms 0}$, we conclude that all the functions $e^A_{\ms \al}$
are determined on $\Si$ by the initial data (and of course our gauge choice), the same being
true for $g_{AB}$ in light of (\ref{v_g}) and $g_{00} = 1$.

The change of basis from $\{ \we_\al \}$ to $\{ e_\al \}$ is given by a Lorentz 
transformation $\La$:
\begin{gather}
e_\al = \La^\mu_{\ms \al} \we_\mu.
\label{Lorentz}
\end{gather}
Taking the
inner product of  (\ref{Lorentz}) with $\fracbarAline$ and restricting to $\Si$ produces
\begin{gather}
e^B_{\ms \al} g_{B \bar{A} } = \La^{\bar{\mu}}_{\ms \al} \we^{\bar{B}}_{\ms \bar{\mu}} 
\widetilde{g}_{\bar{B} \bar{A} } \, \text{ on } \Si,
\label{Lorentz_Si}
\end{gather}
where $\langle \we_0, \frac{ \partial }{\partial \wx } {}_{ {}^{ \hspace{-0.05cm} \bar{A} } } \rangle = 0$ has 
been used. From our previous relations, it follows that all quantities 
on (\ref{Lorentz_Si}), except possibly the $\Lambda$'s themselves, are determined
by the initial data on $\Si$. Viewing (\ref{Lorentz_Si}) as a system for the 
$\La^{\bar{\mu}}_{\ms \al}$, ($\al=0,\dots,3$, $\bar{\mu}=1,2,3$) (which will
be invertible since the matrix of the system is that of the change from 
$\frac{ \partial }{\partial \wx } {}_{ {}^{ \hspace{-0.05cm} \bar{A} } } $ to 
$\we_{\bar{\mu}}$ up to a lowering of the indices) shows that 
$\left. \La^{\bar{\mu}}_{\ms \al} \right|_{\Si} $ is entirely determined by
the initial data and the gauge choices. Recalling  the identity
\begin{gather}
\La^\al_{\ms \ga} \La^\be_{\ms \de} = g_{\ga \de}
\label{identity_Lorentz}
\end{gather}
then gives
\begin{gather}
 (\La^0_{\ms 0})^2 = 1 + \sum_{\bar{\mu}=1}^3 (\La^{\bar{\mu}}_{\ms 0})^2,
 \nonumber
 \end{gather}
so that $\La^0_{\ms 0}$ is also determined only by the initial data when restricted to $\Si$.
Notice that the sign of $\La^0_{\ms 0}$ is positive because $\La$ belongs to
the proper Lorentz group\footnote{Since $u$ belongs to  the inside of the future light-cone.}.
From (\ref{identity_Lorentz}), the remaining $\La$'s are given in terms of 
$\La^{\bar{\mu}}_{\ms \al}$ and we conclude that $\left. \La \right|_{\Si}$ is determined
by the initial data.

Next we investigate the connection coefficients. 
Taking the inner products of (\ref{conn_coeff}) with $e_\de$,
\begin{gather}
 g_{\de\ga} \Ga_{\al \ms \be}^{\ms \ga} = \langle \nabla_\al e_\be, e_\de \rangle. 
\end{gather}
Writing the frames on the right hand
side of the above expression in terms of $\we_\al$ via (\ref{Lorentz}) leads to
\begin{gather}
\Ga_{\al \ms \be}^{\ms \xi} 
=g_{\nu\si} \La^\si_{\ms \ga} g^{\ga\xi} \La^\mu_{\ms \al} \we_\mu(\La^\nu_{\ms \be})
+ g_{\tau\si}\La^\si_{\ms \ga} g^{\ga \xi} \La^\nu_{\ms \be} \La^\mu_{\ms \al} 
\wGa_{\mu \ms \nu}^{\ms \tau}.
\label{transformation_connection}
\end{gather}
On $\Si$, the coefficients $\wGa_{\bar{\mu} \ms \bar{\nu}}^{\ms \bar{\tau}}$ 
are determined by $g_0$. Then, by virtue of 
(\ref{conn_coeff_tilde_gauge}), the construction of $\we_0$ and $\we_{\bar{\mu}}$,
 and our previous relations involving 
$\La$, it follows that all terms on the right hand side of 
(\ref{transformation_connection}) are determined by the initial data, except
possibly those involving the derivatives of the Lorentz transformation
in the direction of $\we_0$. To see how such terms are determined, recall that 
in fluid source gauge $\Ga_{0 \ms \bar{\be}}^{\ms \bar{\xi}} = 0$, so that (\ref{transformation_connection}) gives
\begin{gather}
0
= \La^0_{\ms \ga} g^{\ga\bar{\xi}} \La^0_{\ms 0} \we_0(\La^0_{\ms \bar{\be}})
 + g_{\bar{\nu} \si}\La^\si_{\ms \ga} g^{\ga\bar{\xi}} \La^{\bar{\mu}}_{\ms 0} \we_{\bar{\mu}}(\La^{\bar{\nu}}_{\ms \bar{\be}})
 \nonumber \\
 + g_{\tau\si}\La^\si_{\ms \ga} g^{\ga \bar{\xi}} \La^\nu_{\ms \bar{\be}} \La^\mu_{\ms 0} 
\wGa_{\mu \ms \nu}^{\ms \tau}.
\nonumber
\end{gather}
Because $\La^0_{\ms 0} \geq 1$, we see that 
\begin{gather}
\left. \La^0_{\ms \ga} g^{\ga\bar{\xi}} \we_0(\La^0_{\ms \bar{\be}}) \right|_{\Si}
\label{der_0_La_1} 
\end{gather}
can be written in terms of quantities determined by $\cI$. Using then (\ref{der_0_La_1}) 
into (\ref{transformation_connection}) with $\al \mapsto \bar{\al}$,
$\be\mapsto \bar{\be}$ and $\xi \mapsto \bar{\xi}$ shows that 
$\left. \Ga_{\bar{\al} \ms \bar{\be}}^{\ms \bar{\xi}} \right|_{\Si}$ is also written solely in terms of quantities
coming from $\cI$. The coefficients $\left. \Ga_{0 \ms \bar{\be}}^{\mss 0}\right|_{\Si}$
and $\left. \Ga_{\bar{\al}  \ms \bar{\be}}^{\ms 0}\right|_{\Si}$
are similarly determined: from $\Ga_{0\ms 0}^{\mss 0} =0$ and (\ref{transformation_connection}) one
obtains that $\left. \we_0(\La^0_{\ms 0}) \right|_{\Si}$ is determined by $\cI$; using this 
fact into the expressions for $\Ga_{0 \ms 0}^{\mss \bar{\xi}}$ and 
$\Ga_{\bar{\al} \ms 0}^{\ms \bar{\xi}}$ shows that the same is true for 
these quantities restricted to $\Si$, and hence the claim follows upon evoking
(\ref{gamma_sym_two_bar})
and 
(\ref{gamma_sym_two_0}).

The restrictions of $\varrho$, $s$ and $r$ to $\Si$ have the desired properties in that
they are just scalar functions. For $s_\al$, 
our gauge choice and (\ref{loc_adiabatic}) imply $s_0 \equiv 0$. Then,
recalling that $\frac{\partial}{\partial x^0} = e_0$,
\begin{gather}
 s_{\bar{\al}} = \nabla_{e_{\bar{\al}}} s = 
e^{\bar{A}}_{\ms \bar{\al}}  \frac{ \partial s}{\partial x } {}_{ \hspace{-0.05cm} \bar{A} }, 
\nonumber
\end{gather}
which is determined by $\cI$ on $\Si$ by the above results for $e^{\bar{A}}_{\ms \bar{\al}}$
and noticing that $\left. \frac{ \partial s }{\partial x } {}_{ {}^{ \hspace{-0.05cm} \bar{A} } }\right|_{\Si} = 
\frac{ \partial \scr_0 }{\partial x } {}_{ {}^{ \hspace{-0.05cm} \bar{A} } }$.

From definition \ref{u_elec_u_mag_def},
\begin{gather}
 \widetilde{E}_{\al\be} = \widetilde{W}_{0\al 0\be}\widetilde{u}^0 \widetilde{u}^0 
+ \widetilde{W}_{0\al\bar{\si}\be} \widetilde{u}^0 \widetilde{u}^{\bar{\si}}
+ \widetilde{W}_{\bar{\mu}\al 0\be} \widetilde{u}^{\bar{\mu}}u^0
+ \widetilde{W}_{\bar{\mu}\al\bar{\si}\be}\widetilde{u}^{\bar{\mu}}\widetilde{u}^{\bar{\si}}.
\label{E_al_be_tilde}
\end{gather}
On $\Si$ we have $\widetilde{u}^{\bar{\mu}} = \widetilde{v}^{\bar{\mu}}$, while
$\left. \widetilde{u}^0 \right|_{\Si}$ is computed from the $\widetilde{v}$ and the
normalization condition $u^\al u_\al = 1$. Therefore, to show that 
$\left. \widetilde{E}_{\al\be} \right|_{\Si}$ is written in terms of quantities
determined by $\cI$, we only need to investigate the components of the Weyl tensor in 
(\ref{E_al_be_tilde}), and by its symmetries, it suffices to do so for components 
of the form $\widetilde{W}_{0 \bar{\be} 0 \bar{\de}}$, $\widetilde{W}_{0 \bar{\al} \bar{\si} \bar{\de}}$ and
$\widetilde{W}_{\bar{\mu} \bar{\al} \bar{\si} \bar{\be}}$.

From the Gauss equation, the decomposition of the Riemann tensor (i.e., (\ref{decomposition}) 
with $d_{\al\be\ga\de} \equiv 0$), Einstein equations, and (\ref{trace_Einstein_eq}), we 
obtain
\begin{align}
\begin{split}
 \widetilde{W}_{0 \bar{\be} 0 \bar{\de}} = & 
\frac{1}{2} \K \widetilde{T}_{\bar{\be}\bar{\de}} - \frac{1}{6} \K \widetilde{T} \, \widetilde{g}_{\bar{\be}\bar{\de}}
-\frac{1}{2} \K \widetilde{T}_{00} \widetilde{g}_{\bar{\be}\bar{\de}}
\\
&  - {}^{(3)}\widetilde{R}_{\bar{\be}\bar{\de}} - \wk_{\bar{\la}}^{\mss \bar{\la}} \wk_{\bar{\be}\bar{\de}}
+ \wk^{\bar{\la}}_{\ms \bar{\de}} \wk_{\bar{\be}\bar{\la}} \, \text{ on } \Si,
\end{split}
\label{Weyl_constraint_1}
\end{align}
and
\begin{align}
\begin{split}
\widetilde{W}_{\bar{\mu}\bar{\al}\bar{\si}\bar{\be}} & = {}^{(3)} \widetilde{R}_{\bar{\mu}\bar{\al}\bar{\si}\bar{\be}}
+ \wk_{\bar{\mu}\bar{\si}}\wk_{\bar{\al}\bar{\be}} - \wk_{\bar{\mu}\bar{\be}}\wk_{\bar{\al}\bar{\si}}
-\frac{1}{2} \K (\widetilde{T}_{\bar{\be}\bar{\al}} - \frac{1}{3} \widetilde{T} \, \widetilde{g}_{\bar{\be}\bar{\al}} ) 
\widetilde{g}_{\bar{\mu}\bar{\si}} 
\\
&  + \frac{1}{2} \K ( \widetilde{T}_{\bar{\si}\bar{\al}} - \frac{1}{3} \widetilde{T} \,
\widetilde{g}_{\bar{\si}\bar{\al}} ) \widetilde{g}_{\bar{\mu}\bar{\be}} 
 + \frac{1}{2} \K ( \widetilde{T}_{\bar{\be}\bar{\mu}} - \frac{1}{3} \widetilde{T} \, \widetilde{g}_{\bar{\be}\bar{\mu}} ) 
\widetilde{g}_{\bar{\al}\bar{\si}}
\\
&
-\frac{1}{2} \K ( \widetilde{T}_{\bar{\si}\bar{\mu}} - \frac{1}{3} \widetilde{T} \, \widetilde{g}_{\bar{\si}\bar{\mu}} )
\widetilde{g}_{\bar{\al}\bar{\be}} \, \text{ on } \Si,
\end{split}
\label{Weyl_constraint_2}
\end{align}
where ${}^{(3)}\widetilde{R}_{\bar{\be}\bar{\de}}$ and 
$ {}^{(3)} \widetilde{R}_{\bar{\mu}\bar{\al}\bar{\si}\bar{\be}}$ are respectively 
the Ricci and Riemann curvature of $(\Si, g_0)$.
In a similar fashion but using now the Codazzi equation:
\begin{gather}
 \widetilde{W}_{0\bar{\al}\bar{\si}\bar{\be}} =
{}^{(3)} \nabla_{\bar{\si}} \wk_{\bar{\be}\bar{\al}} - 
{}^{(3)} \nabla_{\bar{\be}} \wk_{\bar{\si}\bar{\al}} + \frac{1}{2} \K \widetilde{T}_{0\bar{\be}} 
\widetilde{g}_{\bar{\al}\bar{\si}}
- \frac{1}{2} \K \widetilde{T}_{0\bar{\si}} \widetilde{g}_{\bar{\al}\bar{\be}} \, \text{ on } \Si,
\label{Weyl_constraint_3}
\end{gather}
where ${}^{(3)} \nabla$ is Levi-Civita connection of $g_0$.

From (\ref{perfect_fluid_source}), (\ref{eq_of_state}), (\ref{pressure}), (\ref{E_al_be_tilde}), 
(\ref{Weyl_constraint_1}), (\ref{Weyl_constraint_2}),
and (\ref{Weyl_constraint_3}), we obtain that $\left. \widetilde{E}_{\al\be} \right|_{\Si}$ is 
a tensor solely determined by $\cI$, which is symmetric and trace-free by the
constraint equations (\ref{hamilonian_constraint}) and (\ref{momentum_constraint}).
The same holds for $\left. E_{\al\be} \right|_{\Si}$ by the invariance of the trace and
the properties of $\La^\mu_{\ms \nu}$ previously shown. By an analogous argument,
a similar statement holds for $\left. B_{\al\be} \right|_{\Si}$.
\end{proof}

\begin{defi}
By proposition \ref{prop_initial_data}, given an initial data set, a choice of 
(fluid source) gauge uniquely determines initial conditions for the reduced system.
These initial conditions for the reduced system are henceforth
called a \Em{reduced initial data set}.
\end{defi}

For practical applications, e.g., to numerically solve the equations,
proposition \ref{prop_initial_data} tells us  how to arrange the initial data.
Given a negative three dimensional Riemannian manifold $(\Si,g_0)$, choose coordinates
$\{ \frac{ \partial }{\partial \widetilde{x} } {}_{ {}^{ \hspace{-0.05cm} \bar{A} } } \}_{\bar{A}=1}^3$ 
and an orthonormal frame $\{ \we_{\bar{\al}} \}_{\bar{\al}=1}^3$. 
Declare a metric on $[0,T]\times \Si$ by $g = d\,\widetilde{t}\,{}^2 + g_0$,
where the tilde emphasizes that we identify $[0,T]$ with the time coordinate
$\widetilde{x}^0$ (so that $\we_0 = \frac{\partial}{\partial \widetilde{x}^0}$), and 
the metric is written in terms of the resulting coordinates on 
$[0,T]\times \Si$.
Using this metric and the (given) 
three-velocity $v$ we determine the four velocity $u$ on $\Si$
by the condition $\langle u, u\rangle = 1$ (notice 
that flowing along $u$ produces the parameter $t$, i.e., $x_0$).
Besides, we complete $u$ to an orthonormal frame $\{ e_\al \}_{\al=0}^3$ at the 
instant $\widetilde{x}^0 = 0$.  
The relations given in proposition \ref{prop_initial_data} can now be used to 
\emph{define} the remaining quantities such as $\La^0_{\ms0}$, 
$\wGa_{\al \ms \be}^{\ms \xi}$, $\Ga_{\al \ms \be}^{\ms \xi}$, etc., 
on the initial Cauchy surface.

\subsection{Well-posedness of the reduced system. \label{well_posedness_reduced_section}}
Short-time existence for the reduced system is a direct consequence of the
way it has been set up, at least under our hypotheses. In fact, the gauge choice and construction of
(\ref{reduced_system}), originally devised by Friedrich \cite{Fri}, are motivated exactly by the
attempt of obtaining a reduced system that is symmetric hyperbolic, in which 
case well-known results can be applied. There are, however, one subtlety and one 
observation, that have to be dealt with. First, the initial conditions for the $e^A_{\ms\bar{\al}}$, 
$\Ga$, $E$ and $B$ involve a different number of derivatives of the initial metric $g_0$, 
hence they belong to $H^\ell_{ul}(\Si)$ with different values of $\ell$. The usual
techniques of symmetric hyperbolic systems, however, yield solutions with 
the same regularity for all the unknowns (see e.g. \cite{Majda, TaylorPseudo}), 
which in this case would be that of the less regular initial data, namely, $E$ and $B$.
This does not give the desired differentiability for the frame coefficients, nor for
 the metric. Second, although at first glance the matrix coefficient of 
$\frac{\partial}{\partial t}$ appears to be a diagonal matrix with entries either $1$ or $\nu^2$, 
the 
 ``spatial" derivatives $e_{\bar{\mu}}$ involve derivatives in the direction of $x^0$ hence
 contributing to the  zeroth matrix coefficient. 

\begin{prop}
Let $\cI$ be an initial data set for the Einstein-Euler-Entropy system satisfying the 
hypotheses of theorem \ref{main_theorem}. Fix a positive real number $T_E$ and 
consider the system (\ref{reduced_system})
defined on $[0,T_E] \times \Si$, where $x^0$ is identified with the time coordinate
on $[0,T_E]$. Let $\cI_0$ be the reduced initial data set determined on 
$\{0\} \times \Si$ by $\cI$. Then there exists a unique solution $z$ 
to (\ref{reduced_system}) on some time interval $[0,T_E^\prime]$, $0 <T_E^\prime \leq T_E$,
and satisfying $\left. z \right|_{t=0} = \cI_0$. Furthermore,
 $e^{\bar{A}}_{\ms \bar{\al}} \in C^0([0,T_E^\prime], H_{ul}^{s+1}(\Si)) \cap C^1([0,T_E^\prime], H_{ul}^{s}(\Si))$, 
$ e^{0}_{\ms \bar{\al}} \in C^0([0,T_E^\prime], H_{ul}^{s}(\Si)) \cap C^1([0,T_E^\prime], H_{ul}^{s-1}(\Si))$, 
$\Ga_{\bar{\al}\ms \bar{\ga} }^{\ms \bar{\be} }, \, \Ga_{0 \ms \bar{\al}}^{\mss 0}, \,
\Ga_{\bar{\al}\ms \bar{\ga} }^{\ms 0 } \in 
 C^0([0,T_E^\prime],$ $ H_{ul}^{s}(\Si))  \cap C^1([0,T_E^\prime], H_{ul}^{s-1}(\Si))$,
$E_{\bar{\al}\bar{\be}},\, B_{\bar{\al}\bar{\be}} \in
C^0([0,T_E^\prime], H_{ul}^{s-1}(\Si)) \cap C^1([0,T_E^\prime],$ $H_{ul}^{s-2}(\Si))$,
 $s_\al \in C^0([0,T_E^\prime], H_{ul}^{s-1}(\Si)) \cap C^1([0,T_E^\prime],H_{ul}^{s-2}(\Si))$, 
$\varrho \,$, $r \, $, $s \in C^0([0,T_E^\prime], H_{ul}^{s}(\Si)) \cap C^1([0,T_E^\prime], H_{ul}^{s-1}(\Si))$.
\label{prop_well_pos_reduded}
\end{prop}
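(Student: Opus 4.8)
The plan is to reduce Proposition \ref{prop_well_pos_reduded} to the standard well-posedness theory for first-order symmetric hyperbolic systems on $H^s_{ul}$, while dealing separately with the two subtleties flagged just before the statement: the uneven regularity of the various components of the reduced initial data set, and the fact that the apparently diagonal coefficient of $\partial_t$ is contaminated by the terms $e_{\bar\mu}(\cdot)$, which secretly contain $\partial_{x^0}$ pieces.

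\textbf{Step 1: Put the system into symmetric-hyperbolic form.} First I would verify that (\ref{reduced_system}) is, modulo the $e_{\bar\mu}$ issue, a first-order symmetric hyperbolic system for the vector $z$ in (\ref{unk_reduced}). The equations (\ref{reduced_eq_frame}), (\ref{reduced_eq_rho})--(\ref{reduced_eq_r}) are transport-type (no spatial derivatives on the right), so they are trivially symmetric with coefficient $\operatorname{Id}$ on $\partial_t$. The genuine hyperbolic block is the coupling among $\Ga_{0\ms\bar\al}^{\ms0}$, $\Ga_{\bar\al\ms\bar\be}^{\ms0}$ via (\ref{reduced_eq_Ga_0_0})--(\ref{reduced_eq_Ga_0}), and among $E_{\bar\al\bar\be}$, $B_{\bar\al\bar\be}$ via (\ref{reduced_eq_E})--(\ref{reduced_eq_B}); these are precisely the Friedrich-type symmetric hyperbolic subsystems (a symmetrizable ``$\Ga$-equation'' pair with sound-speed weight $\nu^2$, and the Bianchi pair for $(E,B)$), and one checks symmetry of the principal part directly, using that $\ve_{\bar\al\bar\be\bar\ga}$ is totally antisymmetric so the $\D_{\bar\mu}$ cross-terms in (\ref{reduced_eq_E})--(\ref{reduced_eq_B}) assemble into a symmetric operator, and that the weight $\nu^2 > 0$ (guaranteed by $\nu_0^2 \geq c_1$ and continuity) makes the $\Ga$-block positive-definite. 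The only obstruction to immediately invoking the standard theorem is that $e_{\bar\mu} = e^0_{\ms\bar\mu}\partial_{x^0} + e^{\bar A}_{\ms\bar\mu}\partial_{x^{\bar A}}$, so writing $\partial_{x^0} = \partial_t$, the terms $e_{\bar\la}(\Ga_{\bar\al\ms0}^{\ms\bar\la})$ etc. carry an $e^0_{\ms\bar\la}\partial_t$ contribution. I would handle this by moving those $\partial_t$-pieces to the left-hand side, absorbing them into the coefficient of $\partial_t$; the resulting ``true'' principal-symbol-in-$t$ matrix $A^0(z)$ is then $\operatorname{Id}$ plus a term linear in $e^0_{\ms\bar\al}$ and in $\nu^2$, which is a small perturbation near the initial slice (since $e^0_{\ms\bar\al}|_\Si$ is controlled by $|v|_{g_0} \geq -c_2$), hence still symmetric positive-definite for short time. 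Thus after this rearrangement (\ref{reduced_system}) is a quasilinear FOSH system $A^0(z)\partial_t z + A^{\bar A}(z)\partial_{x^{\bar A}} z = F(z)$ with $A^0$, $A^{\bar A}$ symmetric, $A^0$ positive-definite on a neighborhood of the initial data, and $F$ smooth in $z$ (the smoothness of $F$ uses the smoothness and invertibility of $\ccP$ and the bounds $r_0 \geq c_1$, $\frac{p_0+\varrho_0}{r_0} \geq c_1$ to keep $p$, $\nu^2$, and their $r,s$-derivatives smooth functions of $z$).

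\textbf{Step 2: Invoke the $H^s_{ul}$ existence theorem.} With the system in this form, I would apply the Kato-type local existence theorem for quasilinear symmetric hyperbolic systems in uniformly local Sobolev spaces (Kato \cite{KatoQL}; see also \cite{Majda, TaylorPseudo} for the $H^s$ version and the standard adaptation to $H^s_{ul}$), which applies since $s - 1 > \tfrac32$ so that $H^{s-1}_{ul}$ is an algebra and the nonlinearities act continuously. This yields a unique solution $z$ on some $[0,T_E']$, $0 < T_E' \leq T_E$, with \emph{all} components in $C^0([0,T_E'], H^{s-1}_{ul}) \cap C^1([0,T_E'], H^{s-2}_{ul})$ — the regularity of the least regular data $E$, $B$, $s_\al$, consistent with the stated conclusion for those fields.

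\textbf{Step 3: Bootstrap the extra regularity of the frame and connection.} This is the main obstacle, because the stated conclusion claims $e^{\bar A}_{\ms\bar\al}$ is two derivatives more regular than $E$, $B$, and one more than $\Ga$, which the uniform FOSH theory does not give directly. The remedy is that the equations for these variables are better than the system as a whole. The transport equations (\ref{reduced_eq_frame}) for $e^A_{\ms\bar\al}$ and (\ref{reduced_eq_r}) for $r$ have right-hand sides built from $\Ga$'s only (no $E$, $B$, no spatial derivatives), so if $\Ga \in C^0([0,T_E'], H^s_{ul})$ then integrating the ODE-in-$t$ (\ref{reduced_eq_frame}) along $t$, with initial data $e^{\bar A}_{\ms\bar\al}|_{t=0}$ determined from $g_0 \in H^{s+1}_{ul}$ hence in $H^{s+1}_{ul}$ (and $e^0_{\ms\bar\al}|_{t=0}$ only in $H^s_{ul}$, by the algebraic relation (\ref{e_bar_al_squared})), propagates $e^{\bar A}_{\ms\bar\al} \in C^0 H^{s+1}_{ul}$ and $e^0_{\ms\bar\al} \in C^0 H^s_{ul}$, with the $C^1$-in-$t$ statements following by reading off $\partial_t$ from the equation. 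So the real task is to show $\Ga \in C^0([0,T_E'], H^s_{ul})$, i.e. one more derivative than Step 2 gave. I would do this by treating the $\Ga$-block (\ref{reduced_eq_Ga_bar})--(\ref{reduced_eq_Ga_0}) as a \emph{linear} symmetric hyperbolic system for $\Ga$ with coefficients and source depending on the already-constructed $z$: its principal part involves spatial derivatives only through $e_{\bar\mu}(\Ga)$ and through $\D_{\bar\mu}$ acting on $E$, $B$ (these $E$, $B$-terms being a source of regularity $H^{s-2}_{ul}$... — here one uses that actually only \emph{undifferentiated} $E$, $B$ appear in (\ref{reduced_eq_Ga_bar}), (\ref{reduced_eq_Ga_0}), namely the terms $\ve^{\mu\bar\al}_{\msb\bar\be}B_{\bar\de\mu}$ and $\nu^2 E_{\bar\al\bar\be}$, which are in $C^0 H^{s-1}_{ul}$, one order below the desired $H^s_{ul}$, so this must be examined carefully). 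The cleaner route, which I would adopt, is the higher-regularity / persistence-of-regularity part of the same Kato theorem: since the initial data for $\Ga$ lies in $H^s_{ul}$ (one order above the ``base'' $H^{s-1}_{ul}$ of the hyperbolic system) while the coefficients $A^0(z), A^{\bar A}(z)$ and forcing terms retain enough regularity (frame coefficients in $H^{s+1}_{ul}$, so coefficients in $H^s_{ul}$; the genuinely rough inputs $E$, $B$ enter the $\Ga$-equations only undifferentiated and multiply lower-order terms), a standard $H^s_{ul}$ energy estimate for the $\Ga$-subsystem — differentiating $s$ times, commuting, and using Moser/Gagliardo–Nirenberg estimates in $H^s_{ul}$ — closes and gives $\Ga \in C^0([0,T_E'], H^s_{ul}) \cap C^1([0,T_E'], H^{s-1}_{ul})$. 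Similarly (\ref{reduced_eq_s_al}) for $s_\al$ is a transport equation with right-hand side linear in $\Ga$ and $s_\al$, giving $s_\al \in C^0 H^{s-1}_{ul} \cap C^1 H^{s-2}_{ul}$ once $\Ga$ is known, and (\ref{reduced_eq_rho}), (\ref{reduced_eq_s}), (\ref{reduced_eq_r}) give $\varrho, s, r \in C^0 H^s_{ul} \cap C^1 H^{s-1}_{ul}$ by direct $t$-integration (using $s > \tfrac32+2$ so $H^{s-1}_{ul}$ is an algebra and $(p+\varrho)\Ga_{\mu\ms0}^{\ms\mu} \in C^0 H^{s-1}_{ul}$ wait — it is in $C^0 H^{s-1}_{ul}$ because $\varrho \in C^0 H^s_{ul}$ and $\Ga \in C^0 H^s_{ul}$, so the product is in $H^s_{ul}$, giving $\partial_t\varrho \in C^0 H^{s-1}_{ul}$, consistent). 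Collecting Steps 1--3 yields exactly the regularity asserted for each component of $z$; uniqueness follows from the uniqueness part of the FOSH theorem applied to the difference of two solutions. The one point demanding genuine care, and the place I expect the argument to be most delicate, is confirming that the rough fields $E$, $B$ (only $C^0 H^{s-1}_{ul}$) enter the $\Ga$-equations \emph{only} through low-order, undifferentiated terms, so that the $\Ga$-subsystem can be closed at the level $H^s_{ul}$ despite one of its forcing terms sitting at $H^{s-1}_{ul}$ — this works because gaining back the derivative is done by the hyperbolic energy estimate, which tolerates a forcing one order below the solution space, not because the forcing is as regular as $\Ga$.
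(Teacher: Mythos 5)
Your overall strategy coincides with the paper's: check symmetry of the principal part after absorbing the $e^0_{\ms\bar{\mu}}\partial_t$ contributions hidden inside $e_{\bar{\mu}}$ into the coefficient of $\partial_t$, obtain a base solution with all components of $z$ in $C^0 H^{s-1}_{ul}\cap C^1 H^{s-2}_{ul}$, and then recover the extra regularity of the frame and connection coefficients by exploiting the block structure of (\ref{reduced_system}). (The paper works on local patches and glues at the end, and runs the bootstrap in the order frame $\to$ $(\varrho,s,r)$ $\to$ $\Ga$ $\to$ frame again rather than starting with $\Ga$; these are inessential differences.)

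The gap is in your justification of the key bootstrap step. You assert that the $H^s_{ul}$ energy estimate for the $\Ga$-subsystem closes because ``the hyperbolic energy estimate tolerates a forcing one order below the solution space.'' It does not: the standard estimate reads $\tfrac{d}{dt}\Vert u\Vert_{H^s}\lesssim \Vert u\Vert_{H^s}+\Vert F\Vert_{H^s}$, and there is no hyperbolic smoothing converting the $H^{s-1}$ inhomogeneities (the undifferentiated $E$, $B$ terms in (\ref{reduced_eq_Ga_bar}) and (\ref{reduced_eq_Ga_0})) into an $H^s$ solution --- the Duhamel contribution of an $H^{s-1}$ forcing is only $H^{s-1}$. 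The same objection applies to your claim that integrating the transport equation (\ref{reduced_eq_frame}) with coefficients $\Ga\in C^0([0,T_E'],H^s_{ul})$ ``propagates'' $H^{s+1}_{ul}$ initial data: the fundamental solution of that ODE in $t$ is the identity plus an $H^s_{ul}$ perturbation, so naive integration only yields $H^s_{ul}$, not $H^{s+1}_{ul}$. This mismatch between the regularity of the data and that of the lower-order/inhomogeneous terms is precisely the point the paper handles by invoking the specific result of Fischer and Marsden \cite{FischerMarsden}, which is tailored to allow lower-order terms one derivative rougher than the initial data and solution; some such additional input (or a restructuring of the argument that avoids the mismatch altogether) is required, and the generic Moser-commutator energy estimate you appeal to will not supply it.
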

\begin{proof}
First, we claim that (\ref{reduced_system}) is symmetric hyperbolic with respect to $t$
on the initial hypersurface and remains so as long as $\nu^2 > 0$, and the slices
$\Si_t = \{ t= \text{ constant} \}$ are space-like with respect to the quadratic form
$g_t$ induced by the frame coefficients.  
Notice that symmetry here means 
symmetry of the matrix coefficients $M^A$ of the derivatives $\frac{\partial}{\partial x^A}$.
Therefore we have  to first change basis via (\ref{frame_coefficients}). 
Other than the first term in each equation, the derivative $\frac{\partial }{\partial t}$ 
also figures in the terms involving $e_{\bar{\mu}}$ in equations (\ref{reduced_eq_Ga_0_0}), 
(\ref{reduced_eq_Ga_0}), (\ref{reduced_eq_E}) and (\ref{reduced_eq_B}), where in these
last two equations the contribution of  $e_{\bar{\mu}}$ comes from the covariant derivatives.
Expressing all derivatives in (\ref{reduced_system}) in terms of $\frac{\partial}{\partial x^A}$ 
 gives that the term in  $\partial_t$
 can be written symbolically as
\begin{align}
 \left( \begin{array}{ccccccccc}
  1 & 0 & 0 & 0 & 0 & 0 & 0 & 0 & 0\\
  0 & 1 & 0 & 0 & 0 & 0 & 0 & 0 & 0\\
  0 & 0  & 1 & -\nu^2 e^0_{\mss\bar{\be}}  & 0 & 0 & 0 & 0 & 0\\
  0 & 0  & -\nu^2 e^0_{\mss\bar{\be}} & \nu^2  & 0  & 0 & 0 & 0 & 0 \\
 0 & 0 & 0 & 0 & m^t_{\bar{\al}\bar{\be}} & 0 & 0 & 0 & 0\\
  0 & 0 & 0 & 0 & 0 & 1 & 0 & 0 & 0\\
  0 & 0 & 0 & 0 & 0 & 0 & 1 & 0 & 0\\
  0 & 0 & 0 & 0 & 0 & 0 & 0 & 1 & 0\\
  0 & 0 & 0 & 0 & 0 & 0 & 0 & 0 & 1
 \end{array}
\right)
\frac{\partial }{\partial t} 
 \left( \begin{array}{c}
e^A_{\mss \bar{\be} } \\
\Ga_{\bar{\de} \ms \bar{\be}}^{\mss\bar{\al}} \\
\Ga_{0 \ms \bar{\al}}^{\mss 0 } \\
\Ga_{ \bar{\al}\mss \bar{\be} }^{\mss 0} \\
(E_{\bar{\al}\bar{\be}},\, B_{\bar{\al}\bar{\be}} ) \\
\varrho \\
s \\
s_\al \\
r
\end{array}
\right) & \nonumber \\
 & \label{matrix_M_0}
\end{align}
where $m^t_{\bar{\al}\bar{\be}}$ is the matrix part corresponding to 
\begin{gather}
 \begin{cases}
  \partial_t E_{\bar{\al}\bar{\be}} 
+ \frac{1}{2} e^0_{\mss \bar{\mu}} \ve_{\bar{\be}}^{\mss \bar{\mu}\bar{\nu}} \partial_t B_{\bar{\nu}\bar{\al}}
+ \frac{1}{2} e^0_{\mss \bar{\mu}} \ve_{\bar{\al}}^{\mss \bar{\mu}\bar{\nu}} \partial_t B_{\bar{\nu}\bar{\be}}, \\
  \partial_t B_{\bar{\al}\bar{\be}} 
- \frac{1}{2} e^0_{\mss \bar{\mu}} \ve_{\bar{\be}}^{\mss \bar{\mu}\bar{\nu}} \partial_t E_{\bar{\nu}\bar{\al}}
- \frac{1}{2} e^0_{\mss \bar{\mu}} \ve_{\bar{\al}}^{\mss \bar{\mu}\bar{\nu}} \partial_t E_{\bar{\nu}\bar{\be}}. 
\label{matrix_M_0_E_B}
 \end{cases}
\end{gather}
From (\ref{matrix_M_0}) and (\ref{matrix_M_0_E_B}), it is seen that 
$M^0 \equiv M^t$ is symmetric. Symmetry of the remaining $M^{\bar{A}}$, $\bar{A}=1,2,3$,
is similarly verified. 

The quadratic form 
$g_t$ is given by
\begin{gather}
{g_t}_{\bar{A}\bar{B}} = f^0_{\ms \bar{A}} f^0_{\ms \bar{B}} - \sum_{\bar{\al}=1}^3 f^{\bar{\al}}_{\ms \bar{A}} 
f^{\bar{\al}}_{\ms \bar{B}},
\label{quadratic_form}
\end{gather}
where the coefficients $f^{\al}_{\ms \bar{A}}$ are defined via
\begin{gather}
\fracbarA
 = f^{\bar{\al}}_{\ms \bar{A}} e_{\bar{\al}}
+ f^0_{\ms \bar{A}} e_0 \equiv f^{\bar{\al}}_{\ms \bar{A}} e_{\bar{\al}}
+ f^0_{\ms \bar{A}} \frac{\partial}{\partial t}.
\nonumber
\end{gather}
From these constructions, we obtain that the characteristics
of the system are non-zero multiples of
\begin{gather}
\xi_0^{K_1}(\xi_0^2 +\frac{1}{4} \pi^{\bar{\al}\bar{\be}} \xi_{\bar{\al}\bar{\be}})^{K_2}
(\xi_0^2 + \nu^2 \pi^{\bar{\ga}\bar{\de}} \xi_{\bar{\ga}\bar{\de}} )^{K_3}
( g^{\la \tau} \xi_\la \xi_\tau )^{K_4},
 \nonumber
\end{gather}
where $K_1,\dots, K_4$ are positive integers. It  follows that the
system is symmetric hyperbolic as long as $\nu^2 > 0$ and $g_t$ remains negative definite.

Consider now the problem on a local patch 
$[0,T_E]\times U$. From our  hypotheses 
and the constructions of proposition \ref{prop_initial_data}, 
 it follows that the initial data $\cI_0  \equiv z(0,\cdot)$ is such that 
\begin{gather}
e^0_{\ms \bar{\al}}(0,\cdot) \in H^{s}(U), 
 e^{\bar{A}}_{\ms \bar{\al}}(0,\cdot) \in H^{s+1}(U), \label{initial_red_frame} \\
(\varrho, r, s)(0,\cdot)  \in H^{s}(U), \label{initial_red_rho} \\
(\Ga_{\bar{\al}\ms \bar{\ga} }^{\ms \bar{\be} }, \Ga_{0 \ms \bar{\al}}^{\mss 0}, 
\Ga_{\bar{\al}\ms \bar{\ga} }^{\ms 0 })(0,\cdot)  \in H^{s}(U), \label{initial_red_Ga} \\
(E_{\bar{\al}\bar{\be}}, B_{\bar{\al}\bar{\be}},  s_\al )(0,\cdot) \in H^{s-1}(U).
\label{initial_red_E}
\end{gather}
$e^{0}_{\ms \bar{\al}}(0,\cdot)$
is only in $H^s$ because it
depends on $v$ (see (\ref{e_bar_al_squared}) and (\ref{v_g})).
From (\ref{initial_red_frame})-(\ref{initial_red_E}), we conclude that 
$\left. \cI_0 \right|_{\Si} \in H^{s-1}$. 
This is enough to apply the theory of quasi-linear symmetric hyperbolic
systems as in \cite{FischerMarsden, KatoQL} (recall that
$s > \frac{3}{2} + 2$), whose hypotheses are satisfied due to the above
positive definiteness of $M^t$. Shrinking $U$ if necessary, we 
 obtain a 
unique solution 
$z_U$ in $C^0([0,T_E^\prime], H^{s-1}(U)) \cap C^1([0,T_E^\prime], H^{s-2}(U))$
for some $0 < T_E^\prime \leq T_E$. 
%e.g. theorem II of Kato

In order to obtain
the desired regularity, we shall use a bootstrap argument. 
Consider the system for the frame coefficients formed only by equations
(\ref{reduced_eq_frame}), where the $\Ga$'s now enter as 
 inhomogeneous or lower order terms given by  
$z_U$, thus they are in 
$C^0([0,T_E^\prime], H^{s-1}(U)) \cap C^1([0,T_E^\prime], H^{s-2}(U))$. 
This is just a first order symmetric linear system
for the frame coefficients, but there is a mismatch between the initial data,
which is in $H^s$ by (\ref{initial_red_frame}), and the lower order/inhomogeneous 
terms, which are in $H^{s-1}$. The results of Fischer-Marsden \cite{FischerMarsden}
%see theorem 1.5 of Fischer-Marsden
deal precisely with this situation, and we obtain therefore a unique
$C^0([0,T_E^{\prime\prime}], H^{s}(U)) \cap C^1([0,T_E^{\prime\prime}], H^{s-1}(U))$
solution\footnote{The results of \cite{FischerMarsden} apply to a large class
of quasi-linear equations. Allowing 
for less regular lower order terms was, as the authors acknowledge, one
of the goals of the paper.}. 
By uniqueness, this solution agrees with that of $z_U$ 
for $[0, \min\{ T^{\prime\prime}_E, T_E^\prime \} ]$, and shrinking the intervals
if necessary, we can assume $T^\prime_E = T^{\prime\prime}_E$.

Next, consider the system of equations (\ref{reduced_eq_rho}), (\ref{reduced_eq_s}) and (\ref{reduced_eq_r}). 
As before, the initial data, given by (\ref{initial_red_rho})
is in $H^s$, whereas the coefficients are only in $H^{s-1}$. Notice that although 
the matrix  coefficient of $\frac{\partial}{\partial t}$  is the identity, 
depending on the equation of state, 
this system is  semi-linear due to the presence of the pressure
in (\ref{reduced_eq_rho}) (see (\ref{pressure}) and (\ref{eq_of_state})).
In any case, the results of \cite{FischerMarsden} still apply, and we obtain that $\varrho$, $s$ and $r$ are also  in $H^s$.
%theorem 2.1 of Fischer-Marsden

A similar argument can be applied to the system (\ref{reduced_system}${}^\prime$)
comprised of equations 
(\ref{reduced_eq_Ga_bar}), (\ref{reduced_eq_Ga_0_0}) and (\ref{reduced_eq_Ga_0}). 
The 
$H^{s-1}$ terms $E_{\bar{\al}\bar{\be}}$, $B_{\bar{\al}\bar{\be}}$, $s_\al$,
and the now $H^{s}$ terms 
$\varrho$, $s$ and $r$ given by $z_U$  enter in the system
(\ref{reduced_system}${}^\prime$) as inhomogeneous or lower order terms, 
whereas the initial data for (\ref{reduced_system}${}^\prime$) given by
$\cI_0$ is  in $H^s$ by (\ref{initial_red_Ga}). 
We  write the system once more in terms of the derivatives $\frac{\partial}{\partial x^A}$,
obtaining a semi-linear system where the coefficient 
matrix $N^t$ of $\frac{\partial}{\partial t}$ involves the frame
coefficients $e^A_{\ms \bar{\al}}$ and 
the sound speed $\nu^2$, which  is given in terms of $s$ and $r$ by 
(\ref{sound_speed}). From the above arguments, we obtain that $N^t$ is positive definite
and is in $H^s$. Evoking the results of \cite{FischerMarsden} one more time
gives that $\Ga_{\bar{\al}\ms \bar{\ga} }^{\ms \bar{\be} }, \Ga_{0 \ms \bar{\al}}^{\mss 0}, 
\Ga_{\bar{\al}\ms \bar{\ga} }^{\ms 0 }$ are in fact in 
$C^0([0,T_E^\prime], H^{s}(U)) \cap C^1([0,T_E^\prime], H^{s-1}(U))$.
Using this improved regularity of the connection coefficients 
again with
(\ref{reduced_eq_frame}) and (\ref{initial_red_frame}), for $\bar{A}=1,2,3$, finally gives 
$e^{\bar{A}}_{\ms \bar{\al}} \in C^0([0,T_E^\prime], H^{s+1}(U)) \cap C^1([0,T_E^\prime], H^{s}(U))$.

We now obtain the result on $[0,T_E^\prime] \times \Si$ by a standard
gluing procedure. Uniqueness guarantees that solutions constructed from
different patches $U$and $U^\prime$ agree on the domain of 
dependence\footnote{Defined in the PDE sense.}
%The reader can see e.g. Friedrich Rendall p. 22
 of 
$U\cap U^\prime$.
The time interval $[0,T_E^\prime]$ can be made uniform due to the uniform
conditions on the initial data, and the local in time, global in space, solution $z$
will belong to the desired $H_{ul}^s$ spaces by the way these spaces are constructed out 
of the Sobolev spaces of maps defined on local patches.
\end{proof}
The geometric meaning behind the definiteness of the matrix $M^t$ 
is easy to grasp. If $u$ were hypersurface orthogonal, then $e_{\bar{\al}}$
would be tangent to $\Si_t$, and $M^t$ would be a diagonal matrix with positive entries.
By continuity, we would expect $M^t$ to remain positive definite 
as long as $u$ is sufficiently inside the light-cone.

We also notice that the 
above bootstrap argument for the regularity of some of the components of $z$
works because of the particular form of system (\ref{reduced_system}), which can 
broken in several sub-systems that are ``mildly coupled'' among themselves. 
Since several of the quantities involved have direct physical meaning, 
it would be interesting to see if such split into sub-systems can have
an useful physical interpretation, perhaps in terms of some effective notion
of weak coupling among certain quantities. 

%For further discussion on differentiability etc, see Section4 of Kato's paper, e.g.
%Remark 4.1 etc; chapter 2 of the book of Majda, including the discussion on classical
%solutions which follows theorem 2.1; chapter II of Alinhac's book "Blow-up for
%Nonlinear Hyperbolic Equations".

\begin{coro}
 The solutions $E$ and $B$ constructed in proposition 
\ref{prop_well_pos_reduded} are trace-free, and $\varrho$
satisfies (\ref{eq_of_state}).
\label{coro_trace_eq_state}
\end{coro}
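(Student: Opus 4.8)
The plan is to show that the three subsidiary quantities
\[
w:=\varrho-\ccP(r,s),\qquad \tau_E:=\de^{\bar{\al}\bar{\be}}E_{\bar{\al}\bar{\be}},\qquad \tau_B:=\de^{\bar{\al}\bar{\be}}B_{\bar{\al}\bar{\be}},
\]
formed from the solution $z$ of Proposition~\ref{prop_well_pos_reduded}, all satisfy a homogeneous first-order \emph{linear} evolution equation along $\partial_t$ with continuous coefficients and vanish on the initial slice $\Si$. Since Lemma~\ref{gauged_quantities_lemma} gives $\pi_{\bar{\al}\bar{\be}}=-\de_{\bar{\al}\bar{\be}}$ in fluid source gauge, the vanishing of $\tau_E$ and $\tau_B$ is exactly the trace-freeness of $E$ and $B$, while $w\equiv 0$ is exactly (\ref{eq_of_state}); so Corollary~\ref{coro_trace_eq_state} will follow once uniqueness for these linear equations (or, if one prefers, an $L^2(\Si_t)$ energy estimate closed by Gronwall's inequality) is invoked. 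All coefficients appearing below are continuous because $s>\tfrac{3}{2}+2$ forces every component of $z$ to be continuous by Sobolev embedding, so each such linear evolution equation has a unique solution.

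For $w$, differentiating in $t$ and inserting (\ref{reduced_eq_rho}), (\ref{reduced_eq_r}) and (\ref{reduced_eq_s}) gives
\[
\partial_t w=-(p+\varrho)\,\Ga_{\mu \msm 0}^{\ms \mu}+\frac{\partial\ccP}{\partial r}\,r\,\Ga_{\mu \msm 0}^{\ms \mu}=\Ga_{\mu \msm 0}^{\ms \mu}\Big(r\,\frac{\partial\ccP}{\partial r}-p-\varrho\Big).
\]
By the prescription (\ref{pressure}) for the pressure in the reduced system, the bracket is a linear function of $w$ (with the natural reading it equals $\ccP(r,s)-\varrho=-w$), so $\partial_t w+\Ga_{\mu \msm 0}^{\ms \mu}\,w=0$; and an initial data set requires $\varrho_0=\ccP(r_0,\scr_0)$, i.e.\ $w|_\Si=0$. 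Hence $w\equiv 0$ on $[0,T_E^\prime]\times\Si$, which is (\ref{eq_of_state}); as a byproduct the formulas for $p$ and $\nu^2$ in (\ref{reduced_system}) now coincide with the genuine thermodynamic relations (\ref{pressure}) and (\ref{sound_speed}).

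For $\tau_E$ and $\tau_B$, I would contract (\ref{reduced_eq_E}) and (\ref{reduced_eq_B}) with $\de^{\bar{\al}\bar{\be}}$, using that $E$ and $B$ are symmetric --- true on $\Si$ by Proposition~\ref{prop_initial_data} and for $t>0$ because the right-hand sides of (\ref{reduced_eq_E})--(\ref{reduced_eq_B}) are symmetric in $\bar{\al},\bar{\be}$ while the non-independent components are defined by symmetry. Under this contraction the ``curl'' terms --- those carrying $\D_{\bar{\mu}}B$, $\D_{\bar{\mu}}E$, or $\Ga_{0 \msm 0}^{\ms \bar{\la}}$ --- all drop, each being a symmetric tensor contracted against $\ve_{\bar{\be}}^{\ms\bar{\mu}\bar{\nu}}$ (or, in the double-$\ve$ term of (\ref{reduced_eq_B}), against a product of two such, reduced via (\ref{identity_ve_1})--(\ref{identity_ve_2})); and in (\ref{reduced_eq_E}) the matter term vanishes outright, its prefactor $\Ga_{\bar{\al}\msm \bar{\be}}^{\ms 0}+\Ga_{\bar{\be}\msm \bar{\al}}^{\ms 0}-\tfrac{2}{3}\de_{\bar{\al}\bar{\be}}\de^{\bar{\mu}\bar{\nu}}\Ga_{\bar{\mu}\msm \bar{\nu}}^{\ms 0}$ being itself $\de$-trace-free. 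What remains are the terms bilinear in $(E\text{ or }B)$ and $\Ga^0$; using $\Ga_{\bar{\al}\msm 0}^{\ms \bar{\mu}}=\Ga_{\bar{\al}\ms \bar{\mu}}^{\ms 0}$ from (\ref{gamma_sym_two_bar}), each of these reduces after contraction either to a multiple of the full contraction $\de^{\bar{\al}\bar{\be}}\de^{\bar{\mu}\bar{\nu}}E_{\bar{\al}\bar{\mu}}\Ga_{\bar{\be}\msm \bar{\nu}}^{\ms 0}$ (resp.\ the analogue with $B$) or to a multiple of $(\de^{\bar{\mu}\bar{\nu}}\Ga_{\bar{\mu}\msm \bar{\nu}}^{\ms 0})\,\tau_E$ (resp.\ $\tau_B$). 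The key point is that the numerical coefficients $-3,-2,1,\dots$ in (\ref{reduced_eq_E})--(\ref{reduced_eq_B}) are arranged precisely so that the first kind of term cancels identically, leaving a decoupled homogeneous linear system $\partial_t\tau_E=c_E\,\tau_E$, $\partial_t\tau_B=c_B\,\tau_B$ with continuous scalar coefficients $c_E,c_B$. Since $\tau_E|_\Si=\tau_B|_\Si=0$ by Proposition~\ref{prop_initial_data}, it follows that $\tau_E\equiv\tau_B\equiv 0$, i.e.\ $E$ and $B$ are trace-free.

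The main obstacle is exactly this last cancellation: verifying that, in the traced equations, the coefficient of $\de^{\bar{\al}\bar{\be}}\de^{\bar{\mu}\bar{\nu}}E_{\bar{\al}\bar{\mu}}\Ga_{\bar{\be}\msm \bar{\nu}}^{\ms 0}$ and of its $B$-analogue is zero. It is a finite but delicate computation, relying on the precise coefficients in (\ref{reduced_eq_E})--(\ref{reduced_eq_B}), on the identities (\ref{identity_ve_1})--(\ref{identity_ve_2}), and on the connection symmetries (\ref{gamma_2_indices})--(\ref{gamma_sym_two_0}); in particular one must not assume $\Ga_{\bar{\al}\msm \bar{\be}}^{\ms 0}$ to be symmetric in $\bar{\al},\bar{\be}$, only $E$ and $B$. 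Everything else --- the differentiations and the uniqueness statement for a homogeneous linear evolution equation with vanishing initial datum --- is routine under the standing hypothesis $s>\tfrac{3}{2}+2$.
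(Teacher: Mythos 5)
Your proposal is correct and follows essentially the same route as the paper: the trace-freeness is obtained by tracing (\ref{reduced_eq_E})--(\ref{reduced_eq_B}) to get a homogeneous linear evolution system for the traces with vanishing initial data (and the cancellation you flag does check out: the surviving full-contraction terms carry coefficients $2-3-2+3=0$, while the curl and matter terms drop by the symmetry of $E$, $B$ and the trace-freeness of the source term), and (\ref{eq_of_state}) is propagated from $t=0$ using (\ref{reduced_eq_rho}), (\ref{reduced_eq_s}), (\ref{reduced_eq_r}) and (\ref{pressure}). Your formulation via the homogeneous equation $\partial_t w+\Ga_{\mu\msm 0}^{\ms\mu}w=0$ for $w=\varrho-\ccP(r,s)$ is a slightly more careful rendering of the paper's identity $\frac{d}{dt}\ccP(r,s)=\partial_t\varrho$, but it is the same argument.
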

\begin{proof}
 Tracing equations (\ref{reduced_eq_E}) and (\ref{reduced_eq_B}), we obtain
 a first order symmetric hyperbolic system for the traces of $E$ and $B$. Since 
 $\left. E^\al_{\ms \al}\right|_{\Si} = 0 =  \left. B^\al_{\ms \al}\right|_{\Si}$
 by proposition \ref{prop_initial_data} and (\ref{E_0_B_0_gauge}), by uniqueness 
 these tensors remain traceless.
 
Locally $s$ and $r$ are written in terms of the coordinates $s = s(x^0,...\,,x^3)$, 
$r=r(x^0,...\, ,x^3)$.
From (\ref{reduced_eq_rho}), (\ref{reduced_eq_s}), (\ref{reduced_eq_r}),
(\ref{pressure}), we obtain
\begin{gather}
\frac{d}{dt} \ccP( r, s) 
= \partial_t \varrho,
\nonumber
\end{gather}
which implies $\varrho = \ccP(r, s)$ since this holds at $t=0$.
\end{proof}

\subsection{Propagation of the gauge. \label{propagation_gauge}}

Letting $e^A_{\ms 0} = \de^A_{\ms 0}$
and $\cM = [0, T_E^\prime] \times \Si$, from proposition \ref{prop_well_pos_reduded}, 
we obtain a space-time
$(\cM, g)$ with the metric given by (\ref{def_metric_frame_coeff}). Notice 
that $g$ agrees with $g_0$ on $\Si$ because of 
(\ref{quadratic_form}).
$(\cM,g)$ is turned into a fluid source
by setting $u = e_0$. The components of the fields that are not given in proposition
\ref{prop_well_pos_reduded}, e.g. $E_{\al 0}$ etc, are defined by their corresponding
expressions in fluid source gauge, e.g. (\ref{E_0_B_0_gauge}), and their symmetry
relations. The frame $\{ e_\al \}_{\al=0}^3$ is then a fluid source gauge, 
with the quantities of proposition \ref{prop_well_pos_reduded} being exactly field components
written in this gauge. All other quantities \emph{throughout this section will be
written with respect to this frame} unless stated differently. 
Moreover, we shall also assume the hypotheses of theorem \ref{main_theorem}, 
so that the results of the previous section will also be used throughout.
The coordinates
are arranged as explained after definition \ref{def_fluid_source_gauge}; 
in particular $e_0 = \frac{\partial}{\partial t}$.
By construction, (\ref{metric_compatible}) is satisfied
and  the connection associated with $g$ is compatible with the metric, 
but it is not known at this point whether it is torsion free. 
%see comments Friedrich-Nagy p. 637
In particular, in all expressions below involving a covariant derivative, 
it is to be understood that $\nabla$ is such a connection and
not the Levi-Civita one, at least until the torsion free condition is demonstrated.
From $E$ and $B$, we define 
$W$ and $W^*$ by  (\ref{decomp_W}) and (\ref{decomp_W_dual}). $W$ has then
the usual symmetries of  Weyl tensor and is trace-free by 
corollary \ref{coro_trace_eq_state}, but it is not
yet known that $W$ is the Weyl tensor of the metric $g$. 
%again, see comments Friedrich-Nagy p. 637
With $u$ and $\varrho$ known and $p$ given by
(\ref{pressure}), we  define
 $T_{\al\be}$ 
 by (\ref{perfect_fluid_source}).
Recall 
(\ref{definition_Riemann})
 and define $d^\al_{\ms \be\ga\de}$
via 
(\ref{decomposition}), where $S_{\al\be}$ in 
(\ref{decomposition})
is given by
  (\ref{Einstein_eq_equivalent}).
Define also
$\cT_{\al\msm \be}^{\mss \ga}$,
$F^\al_{\ms\be\ga\de}$, $F_{\al\be\ga}$ and $q_\al$, by 
(\ref{torsion}), (\ref{Friedrichs_tensor}), (\ref{div_Friedrichs_tensor}) and
(\ref{q_al}), respectively, where $\nu$ in $q_\al$ is given 
by  (\ref{sound_speed}), with $s$, $r$, and $\varrho$ 
 being those of proposition \ref{prop_well_pos_reduded}, which 
 satisfy $\varrho = \ccP(r, s)$ by corollary \ref{coro_trace_eq_state}.
We now proceed to show that (\ref{q_al_equation}), 
(\ref{no_torsion_eq}), (\ref{decomposition_eq}) and (\ref{div_Friedrichs_tensor_eq})
are satisfied. In order to do so, we shall derive a symmetric hyperbolic system 
of equations
for these quantities and show that they vanish on the initial slice.

\begin{rema}
At the risk of being repetitive, we stress again that when referring to 
equations such as (\ref{decomp_W}) and
(\ref{Einstein_eq_equivalent}), it should be understood that they are being used to formally \emph{define}
$W$, $S_{\al\be}$ etc, from the quantities obtained from proposition \ref{prop_initial_data}.
Notice also that because we do not know that $\nabla$ is the Levi-Citiva connection,
 the torsion tensor will have to appear in several manipulations below.
\end{rema}

\begin{lemma}
With the above definitions, $d^\al_{\mss\be\ga\de}$ enjoys all the symmetries
of the Weyl  tensor, 
$\cT_{\al\msm \be}^{\mss \ga} = -\cT_{\be\msm \al}^{\mss \ga}$,
and  (\ref{Euler_1}),
(\ref{loc_adiabatic}) and (\ref{rest_mass_conservation}) are satisfied.
Furthermore, $q_0 = 0$, and 
\begin{gather}
\cT_{0 \msm \bar{\al}}^{\mss \be} = 0, \label{torsion_reduced} \\
 d^{\bar{\al}}_{\ms\bar{\be}0\bar{\ga}} = 0, \label{d_reduced} \\
\nabla^\mu T_{\mu \al} = q_\al .
\label{div_T_q}
\end{gather}
\label{lemma_prep}
\end{lemma}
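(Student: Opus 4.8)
The plan is to unwind the definitions made at the beginning of Section~\ref{propagation_gauge} and to verify each assertion by direct substitution; no global argument is needed, since every statement is pointwise on $\cM$. The only inputs are the reduced equations (\ref{reduced_system}), the gauge identities of Lemma~\ref{gauged_quantities_lemma}, the algebraic relations (\ref{gamma_2_indices})--(\ref{gamma_sym_two_0}) (valid for any metric-compatible connection in any frame, hence for our $\nabla$), and the arrangement of coordinates in which $e_0 = \partial/\partial t$ and $e_{\bar{\al}} = e^A_{\ms \bar{\al}}\,\frac{\partial}{\partial x^A}$.

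First the algebraic statements. That $\cT_{\al \msm \be}^{\mss \ga} = -\cT_{\be \msm \al}^{\mss \ga}$ is immediate from (\ref{torsion}). For the symmetries of $d^\al_{\ms \be\ga\de}$ one checks the three summands of (\ref{decomposition}) separately: the term $-g^\al_{\ms [\ga} S_{\de]\be}^{} + g_{\be[\ga}^{}S_{\de]}^{\ms\al}$ carries the algebraic symmetries of a curvature tensor because $S_{\al\be}$ is symmetric, $S$ being defined by (\ref{Einstein_eq_equivalent}) from the symmetric perfect-fluid tensor (\ref{perfect_fluid_source}); the tensor $W^\al_{\ms \be\ga\de}$ assembled from $E$ and $B$ through (\ref{decomp_W}) is an algebraic Weyl tensor --- this is Lemma~\ref{decomp_W_lemma} read in reverse, and uses that $E$ and $B$ are symmetric (part of the reduced system) and trace-free (Corollary~\ref{coro_trace_eq_state}); and $R^\al_{\ms \be\ga\de}$ given by (\ref{definition_Riemann}) is antisymmetric in $\ga,\de$ by inspection and, after lowering, antisymmetric in $\al,\be$ by differentiating (\ref{metric_compatible}). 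Combining these yields the claimed symmetries of $d$.

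Next the matter and gauge relations. In fluid source gauge $u^\al = \de^\al_{\mss 0}$ and $\nabla_\al u^\al = \Ga_{\mu \msm 0}^{\ms \mu}$ (from (\ref{conn_coeff})), while on scalars $u^\al\nabla_\al(\cdot) = e_0(\cdot) = \partial_t(\cdot)$; hence (\ref{reduced_eq_rho}), (\ref{reduced_eq_r}) and (\ref{reduced_eq_s}) become respectively (\ref{Euler_1}), (\ref{rest_mass_conservation}) and (\ref{loc_adiabatic}). For $q_0$, put $\al=0$ in (\ref{q_al}): one has $u^\mu\nabla_\mu u_0 = -\Ga_{0 \ms 0}^{\ms 0} = 0$ by (\ref{gamma_2_indices}), $u_0\nabla_\mu u^\mu = \Ga_{\mu \msm 0}^{\ms \mu}$, and $\nabla_0 p = \partial_t p = -\nu^2(p+\varrho)\Ga_{\mu \msm 0}^{\ms \mu}$ by (\ref{sound_speed}), (\ref{reduced_eq_s}) and (\ref{reduced_eq_rho}), so the three terms of $q_0$ cancel. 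Identity (\ref{div_T_q}) then follows from the expansion (\ref{conservation_T_perfect}), which uses only metric compatibility: its first bracket vanishes by (\ref{Euler_1}), leaving $\nabla^\mu T_{\mu\al} - q_\al = u_\al\big(u^\mu\nabla_\mu p + \nu^2(p+\varrho)\nabla_\mu u^\mu\big)$, and the parenthesis vanishes by the same evaluation of $\nabla_0 p$. For (\ref{torsion_reduced}), note $[e_0,e_{\bar{\al}}] = (\partial_t e^A_{\ms \bar{\al}})\,\frac{\partial}{\partial x^A}$; substituting (\ref{reduced_eq_frame}) and $\Ga_{0\ms \bar{\al}}^{\ms \bar{\mu}}=0$ (Lemma~\ref{gauged_quantities_lemma}) gives $[e_0,e_{\bar{\al}}] = -\Ga_{\bar{\al}\ms 0}^{\ms \bar{\mu}} e_{\bar{\mu}} + \Ga_{0\ms \bar{\al}}^{\ms 0} e_0$, and inserting this into (\ref{torsion}) together with $\Ga_{0\ms \bar{\al}}^{\ms \bar{\mu}}=0$ and $\Ga_{\bar{\al}\ms 0}^{\ms 0}=0$ (from (\ref{metric_compatible})) makes every term cancel.

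The remaining assertion (\ref{d_reduced}) is where the genuine bookkeeping lives, and I expect it to be the main obstacle. The idea is to substitute (\ref{definition_Riemann}), (\ref{decomp_W}) and (\ref{Schouten})--(\ref{Einstein_eq_equivalent}) into (\ref{decomposition}) for the index pattern $(\bar{\al},\bar{\be},0,\bar{\ga})$, collapse every term using Lemma~\ref{gauged_quantities_lemma} and (\ref{gamma_2_indices})--(\ref{gamma_sym_two_0}), and recognize the outcome as the evolution equation (\ref{reduced_eq_Ga_bar}). Concretely, $R^{\bar{\al}}_{\ms\bar{\be}0\bar{\ga}}$ reduces to $\partial_t\Ga_{\bar{\ga}\msm\bar{\be}}^{\ms\bar{\al}} + \Ga_{\bar{\nu}\ms\bar{\be}}^{\ms\bar{\al}}\Ga_{\bar{\ga}\msm 0}^{\ms\bar{\nu}} + \Ga_{0\ms 0}^{\mss\bar{\al}}\Ga_{\bar{\ga}\ms\bar{\be}}^{\mss 0} - \Ga_{\bar{\ga}\msm 0}^{\mss\bar{\al}}\Ga_{0\ms 0}^{\ms\bar{\be}}$; the Schouten summand vanishes because $T_{0\bar{\be}} = 0$ in the gauge forces $S_{0\bar{\be}} = 0$; and $W^{\bar{\al}}_{\ms\bar{\be}0\bar{\ga}}$ reduces, via (\ref{identity_ve_1})--(\ref{identity_ve_2}), to $\ve^{\bar{\mu}\bar{\al}}_{\msb\bar{\be}}B_{\bar{\ga}\bar{\mu}}$. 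Subtracting, $d^{\bar{\al}}_{\ms\bar{\be}0\bar{\ga}}$ equals the left-hand side of (\ref{reduced_eq_Ga_bar}), which is zero by Proposition~\ref{prop_well_pos_reduded}. The only other delicate point is the reverse reading of Lemma~\ref{decomp_W_lemma} used above --- that the $W$ built from symmetric, trace-free $E,B$ genuinely possesses all the Weyl symmetries --- but that is the standard bijectivity of the $u$-electric/magnetic decomposition, and is in any case implicit in the derivation recalled in Appendix~\ref{derivation_appendix}.
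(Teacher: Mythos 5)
Your proposal is correct and takes essentially the same route as the paper: the paper's proof likewise identifies (\ref{torsion_reduced}), (\ref{d_reduced}), and (\ref{Euler_1}), (\ref{loc_adiabatic}), (\ref{rest_mass_conservation}) with the gauge-fixed forms of (\ref{reduced_eq_frame}), (\ref{reduced_eq_Ga_bar}), and (\ref{reduced_eq_rho}), (\ref{reduced_eq_s}), (\ref{reduced_eq_r}), and obtains $q_0=0$ and (\ref{div_T_q}) from $\partial_t p=-\nu^2(p+\varrho)\Ga_{\mu\ms 0}^{\ms\mu}$ together with the Leibniz expansion of $\nabla^\mu T_{\mu\al}$. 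You merely write out explicitly the substitutions that the paper declares "equivalent" without computation.
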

\begin{proof}
The symmetries of $d^\al_{\mss\be\ga\de}$ and anti-symmetry of the torsion tensor
are direct consequences of their definitions and the fact that $W^\al_{\mss\be\ga\de}$
 has these symmetries.
 In fluid source gauge,
(\ref{torsion_reduced}) and (\ref{d_reduced}) are equivalent to (\ref{reduced_eq_frame})
and (\ref{reduced_eq_Ga_bar}), respectively,  
while (\ref{Euler_1}), (\ref{loc_adiabatic}) and (\ref{rest_mass_conservation}) are
 the same as (\ref{reduced_eq_rho}), (\ref{reduced_eq_s}) and (\ref{reduced_eq_r}), respectively.

Since the pressure is a function of $r$ and $s$ by (\ref{eq_of_state}) and (\ref{pressure}),
differentiating $p$ with respect to $t$ and using (\ref{sound_speed}), (\ref{reduced_eq_s})
and (\ref{reduced_eq_r}) yields
\begin{gather}
\partial_t p + (p+\varrho) \nu^2 \Ga_{\mu \ms 0}^{\ms \mu} = 0.
\label{partial_t_p}
\end{gather}
(\ref{partial_t_p}) is equivalent to $q_0 = 0$ in our gauge.
Computing $\nabla^\mu T_{\mu \al}$ from (\ref{perfect_fluid_source})
and 
 using 
(\ref{reduced_eq_rho})  (or equivalently (\ref{Euler_1})),
\begin{gather}
\nabla^\mu T_{\mu \al} =  
 (p + \varrho) u^\mu\nabla_\mu u_\al + u_\al u^\mu \nabla_\mu p - \nabla_\al p,
\label{div_T_after_reduced}
\end{gather}
which in light of (\ref{partial_t_p}) and our gauge conditions, produces (\ref{div_T_q}).
\end{proof}

The next lemma and the proposition that follows will be the main ingredients in proving the propagation of the gauge.
Although both proofs are heavily computational, they follow the same lines of
\cite{Fri, FriRen, FriNagy}.

\begin{lemma}
The following relations hold:
\begin{subequations}{\label{subsidiary}}
\begin{align}
& \partial_t F_{0 \bar{\al} 0} - \frac{1}{4} \ve_{\bar{\al}}^{\ms \bar{\mu} \bar{\nu} }
\ve_{\bar{\nu}}^{\ms \bar{\la}\bar{\si}} \D_{\bar{\mu}} F_{0 \bar{\la}\bar{\si}} 
+d^{\mu \msb \nu \si}_{\ms [\nu \msm} W_{\si]\mu\bar{\al}0}^{}
- W_{\mu\nu\si[0}^{}d^{\si \ms \mu\nu}_{\mss \bar{\al}]}
\nonumber \\
&\hspace{0.5cm}
- \frac{1}{2} \cT_{\mu \msm \nu}^{\mss \si} \nabla_\si W^{\mu\nu}_{\mss \mss \bar{\al} 0}
 +d^{\mu \msm \nu}_{\ms [0 \mss \bar{\al}]} S_{\mu\nu}
 +d^{\mu \ms \nu}_{\ms \nu \ms [\bar{\al}} S_{0]\mu}^{} +
  \Ga_{\mu\ms 0}^{\mss \mu} F_{0\bar{\al} 0} 
\nonumber \\
&\hspace{0.5cm}
  - \frac{1}{4} \Ga_{\bar{\al} \ms 0}^{\mss \bar{\la}} F_{0\bar{\la} 0} 
  (\nabla_0 \ve^{\bar{\nu}}_{\mss \bar{\al}0}  
  - \frac{1}{2} \nabla^\mu \ve^{\bar{\nu}}_{\mss \mu \bar{\al}} )
    \ve_{\bar{\nu}}^{\mss \bar{\mu}\bar{\si}} F_{0\bar{\mu}\bar{\si}}
    = 0
    \label{sub_1}
\\ %p.149
%%%%%%%%%%%%%%%%%%
& \frac{1}{2} \ve_{\bar{\ga}}^{\ms \bar{\mu}\bar{\nu}} \partial_t F_{0\bar{\mu}\bar{\nu}}
  + \frac{1}{2} \ve_{\bar{\ga}}^{\ms \bar{\mu}\bar{\nu}} \D_{\bar{\mu}} F_{0\bar{\nu}0} +
  \frac{1}{2} \ve_{\bar{\ga}}^{\ms \bar{\al}\bar{\be}}(\Ga_{0\ms \bar{\al}}^{\mss 0}F_{0\bar{\be}0}
  - \Ga_{0\ms \bar{\be}}^{\mss 0}F_{0\bar{\al}0} )
\nonumber \\
&\hspace{0.5cm}  
  +\frac{1}{2} \ve_{\bar{\ga}}^{\ms \bar{\al}\bar{\be}} \nabla^\mu \pi_{\mu [\bar{\al}}F_{|0|\bar{\be}]0} 
 -\frac{1}{4} \ve_{\bar{\ga}}^{\ms \bar{\al}\bar{\be}}( \ve^{\bar{\nu}}_{\mss \bar{\al}\bar{\be}} 
  \Ga_{\mu \ms 0}^{\mss \mu}  + \nabla_0 \ve^{\bar{\nu}}_{\mss \bar{\al}\bar{\be}}
    \nonumber \\
& \hspace{0.5cm} 
   +
   \ve^{\bar{\nu}\bar{\mu}}_{\msm [\bar{\al}} \Ga_{|\bar{\mu}|\mss \bar{\be}] }^{\msb 0} )
   \ve_{\bar{\nu}}^{\mss \bar{\si} \bar{\la}} F_{0\bar{\si}\bar{\la}}
   -\frac{1}{4} \K
    \ve_{\bar{\ga}}^{\ms \bar{\al}\bar{\be}}(\nabla^\mu \pi_{\mu \bar{\al}} q_{\bar{\be}} 
    - \nabla^\mu  \pi_{\mu \bar{\be}} q_{\bar{\al}} )
   \nonumber \\
 & \hspace{0.5cm}   
   -\frac{1}{4} \K (p+\varrho) \nu^2 \ve_{\bar{\ga}}^{\ms \bar{\al}\bar{\be}}  d^{0}_{\mss \bar{\al}0\bar{\be}}
= 0     \label{sub_2}
  \\  %p. 152 
 %%%%%%%%%%%%%%% 
  & \partial_t \cT_{\bar{\be} \ms \bar{\ga}}^{\mss \mu}  - \sum_{(0\bar{\be}\bar{\ga})}
   (d^{\mu}_{\ms 0\bar{\be}\bar{\ga}} + \cT_{0 \msm \bar{\be}}^{\mss \la} \cT_{\bar{\ga} \msm \la}^{\mss \mu} ) 
   + \Ga_{0 \ms \la}^{\ms \mu} \cT_{\bar{\be}\msm\bar{\ga}}^{\mss \la} 
   + \Ga_{\bar{\be} \ms \la}^{\ms \mu} \cT_{\bar{\ga}\msm 0}^{\mss \la} 
   + \Ga_{\bar{\ga} \ms \la}^{\ms \mu} \cT_{0 \msm \bar{\be}}^{\mss \la} 
   \nonumber \\
& \hspace{0.5cm}   
    - \sum_{ \langle \bar{\be}\bar{\ga}  \rangle } \Ga_{0 \ms \bar{\be}}^{\mss \la} 
    \cT_{\la \msm \bar{\ga}}^{\mss \mu}
    - \sum_{ \langle \bar{\ga} 0  \rangle } \Ga_{\bar{\be} \ms \bar{\ga}}^{\mss \la} 
    \cT_{\la \msm 0}^{\mss \mu}
    - \sum_{ \langle  0  \bar{\be} \rangle } \Ga_{\bar{\ga} \ms 0}^{\mss \la} 
    \cT_{\la \msm \bar{\be}}^{\mss \mu}
     = 0 \label{sub_3}
  \\ %p. 154
%%%%%%%%%%%%%%%%
 & \partial_t d^{\bar{\mu}}_{\mss \bar{\nu}\bar{\be}\bar{\ga}} 
    +\sum_{(0\bar{\be}\bar{\ga} )} R^{\bar{\mu}}_{\mss \bar{\nu}\si 0 }\cT_{\bar{\be} \ms \bar{\ga}}^{\mss \si} 
    +\frac{1}{2} \ve_{0 \bar{\be} \bar{\ga}}^{\msb\mss \bar{\la}}(F_{\bar{\la}\tau \xi} + g_{\bar{\la} \tau} q_\xi )
    \ve_{\bar{\nu}}^{\mss \bar{\mu} \tau \xi} 
    + \Ga_{0\ms\si}^{\mss \bar{\mu}} d^{\si}_{\mss \bar{\nu}\bar{\be}\bar{\ga}}    
       \nonumber \\
& \hspace{0.5cm}  
    + \Ga_{\bar{\be} \ms\si}^{\ms \bar{\mu}} d^{\si}_{\mss \bar{\nu} \bar{\ga} 0 } 
     + \Ga_{\bar{\ga} \ms\si}^{\mss \bar{\mu}} d^{\si}_{\mss \bar{\nu}  0 \bar{\be}} 
     -\sum_{\langle \bar{\nu}\bar{\be}\bar{\ga} \rangle } \Ga_{0 \ms \bar{\nu}}^{\mss\si} 
     d^{\bar{\mu}}_{\mss \si \bar{\be}\bar{\ga}}
     \nonumber \\
& \hspace{0.5cm}   
      -\sum_{\langle \bar{\nu} \bar{\ga} 0 \rangle } \Ga_{\bar{\be} \mss \bar{\nu}}^{\mss\si} 
     d^{\bar{\mu}}_{\mss \si \bar{\ga} 0 }
      -\sum_{\langle \bar{\nu} 0 \bar{\be} \rangle } \Ga_{\bar{\ga} \mss \bar{\nu}}^{\mss\si} 
     d^{\bar{\mu}}_{\mss \si 0 \bar{\be} } 
     = 0   \label{sub_4}
    \\ %p. 159, 177
%%%%%%%%%%%%%%%%%%%%%    
 & \partial_t d^{\bar{\nu}}_{\ms 0\bar{\be} \bar{\nu} } - \frac{1}{2}
    \ve_{\bar{\be}}^{\ms \bar{\la}\bar{\tau}} \ve_{\tau}^{ \ms \bar{\mu} \bar{\si} }
    e_{\bar{\la}}( d^{0}_{\ms \bar{\mu} 0 \bar{\si} } ) 
    + \sum_{(0\bar{\be}\bar{\ga} )} \de_{\bar{\nu}}^{\mss\bar{\ga}} 
    R^{\bar{\nu}}_{\mss 0 \si 0 }
    \cT_{\bar{\be} \ms \bar{\ga}}^{\mss \si} 
    + \frac{1}{2} \ve_{0 \bar{\be} \bar{\nu}}^{\msb\mss \bar{\la}}(F_{\bar{\la}\tau \xi}
\nonumber \\
& \hspace{0.5cm}
     + 
    g_{\bar{\la} \tau} q_\xi  )  \ve_{0 }^{\mss \bar{\nu}  \tau \xi}
    + \de_{\bar{\nu}}^{\mss \bar{\ga}} \Ga_{0\ms 0}^{\mss \si} d^{\bar{\nu}}_{\mss \bar{\si}\bar{\be}\bar{\ga}}
    + \de_{\bar{\nu}}^{\mss \bar{\ga}} \Ga_{\bar{\be} \mss 0}^{\mss \si} 
    d^{\bar{\nu}}_{\mss \si \bar{\ga} 0 }
    + \de_{\bar{\nu}}^{\mss \bar{\ga}} \Ga_{\bar{\ga} \mss 0}^{\mss \si} 
    d^{\bar{\nu}}_{\mss \si 0 \bar{\be} } 
    \nonumber \\
& \hspace{0.5cm}    
    - \de_{\bar{\nu}}^{\mss \bar{\ga}}  \sum_{\langle \bar{\nu}\bar{\be}\bar{\ga} \rangle} 
    \Ga_{0 \ms \si }^{\mss \bar{\nu}} d^{\si}_{\mss 0  \bar{\be}\bar{\ga}}
     + \de_{\bar{\nu}}^{\mss \bar{\ga}}  \sum_{\langle \bar{\nu}\bar{\ga} 0 \rangle} 
     \Ga_{ \bar{\be} \mss \si }^{\mss \bar{\nu}} d^{\si }_{\mss 0 \bar{\ga} 0 }
    \nonumber \\
& \hspace{0.5cm}       
     + \de_{\bar{\nu}}^{\mss \bar{\ga}}  \sum_{\langle \bar{\nu}0 \bar{\be} \rangle} 
     \Ga_{ \bar{\ga} \mss \si}^{\mss \bar{\nu} } d^{\si }_{\mss 0  0 \bar{\be}}
     = 0   \label{sub_5} 
     \\ %p. 161, 178
%%%%%%%%%%%%%%%%%%%%%%%
& \frac{1}{2} \ve_{\bar{\nu}}^{\mss \bar{\al}\bar{\be} }\partial_t d^{0}_{\mss \bar{\al}0\bar{\be}} 
    + \frac{1}{2} \nu^2 \ve_{\bar{\nu}}^{\mss \bar{\al} \bar{\be} } 
    e_{\bar{\al}}( d^{\bar{\la}}_{\mss 0\bar{\be} \bar{\la} } ) 
    +  \frac{1}{2} \nu^2 \pi^{\bar{\al}\bar{\la}} \ve_{(\bar{\la}}^{\msb \bar{\si} \bar{\tau}}
   e_{\bar{\al}}( d^{0}_{\mss \bar{\nu})\bar{\si}\bar{\tau}} )  
   \nonumber \\
 & \hspace{0.5cm} 
       -\frac{1}{6}\nu^2  \ve^{\bar{\be}\bar{\al}\bar{\ga} }
    \Big [  \sum_{(\bar{\al}\bar{\be}\bar{\ga})} R^{0}_{\ms \bar{\nu}\si\bar{\al}} 
   \cT_{\bar{\be} \ms \bar{\ga}}^{\mss \si} 
     + \frac{1}{2} \ve_{\bar{\al}\bar{\be}\bar{\ga}}^{\ms \msm \bar{\la}}
   ( F_{\bar{\la}\tau \xi} + g_{\bar{\la}\tau} q_\xi )  \ve_{\bar{\nu}}^{\mss 0 \tau \xi}  
   \Big ]
   \nonumber \\
 & \hspace{0.5cm}   
    -\frac{1}{6}\nu^2  \ve^{\bar{\be}\bar{\al}\bar{\ga} }
    \Big [ 
    \Ga_{\bar{\al}\mss \si}^{\mss 0} d^{\si}_{\mss \bar{\nu}\bar{\be}\bar{\ga}}
    + \Ga_{\bar{\be}\mss \si}^{\mss 0} d^{\si}_{\mss \bar{\nu}\bar{\ga}\bar{\al}}
    + \Ga_{\bar{\ga}\mss \si}^{\mss 0} d^{\si}_{\mss \bar{\nu}\bar{\al}\bar{\be}}
   \nonumber \\
 & \hspace{0.5cm} 
    - \sum_{\langle \bar{\al}\bar{\be}\bar{\ga} \rangle} \Ga_{\bar{\al}\mss\bar{\nu}}^{\mss \si}
    d^{0}_{\mss \si\bar{\be}\bar{\ga}}
      - \sum_{\langle \bar{\nu}\bar{\ga}\bar{\al} \rangle} \Ga_{\bar{\be}\mss\bar{\nu}}^{\mss \si}
    d^{0}_{\mss \si\bar{\ga}\bar{\al}}
    - \sum_{\langle \bar{\nu}\bar{\al} \bar{\be}\rangle} \Ga_{\bar{\ga}\mss\bar{\nu}}^{\mss \si}
    d^{0}_{\mss \si\bar{\al}\bar{\be}}   
    \Big ]
   \nonumber \\
 & \hspace{0.5cm}     
    +\frac{1}{2} \frac{1}{p + \varrho} \ve_{\bar{\nu}}^{\mss \bar{\al}\bar{\be} }
    \Big[ \frac{1}{2} \Ga_{\si \ms0}^{\mss \la} \cT_{\bar{\al} \ms \bar{\be}}^{\mss \si} q_\la  
    - \frac{1}{2} \pi_{\bar{\al}}^{\mss \bar{\mu}} \pi_{\bar{\be}}^{\mss\bar{\nu}}
    ( d^{\la}_{\ms 0\bar{\mu}\bar{\nu}} + d^{\la}_{\ms \bar{\mu}\bar{\nu} 0} 
       + d^{\la}_{\ms \bar{\nu}0\bar{\mu}} ) q_\la      
     \nonumber \\
 & \hspace{0.5cm} 
       +2 \nu^2  (p+\varrho ) \Ga_{0 \ms \bar{\al}}^{\mss 0 } d^{\bar{\mu}}_{\mss 0 \bar{\be}\bar{\mu}}     
      -2\nu^2  (p+\varrho ) \Ga_{0 \ms \bar{\be}}^{\mss 0 } d^{\bar{\mu}}_{\mss 0 \bar{\al}\bar{\mu}}
        - ( \partial_t p+\partial_t \varrho) d^0_{\mss \bar{\al}0\bar{\be}}
        \nonumber \\
 & \hspace{0.5cm}      
       + \nu^2(p+\varrho) 
         ( \Ga_{\bar{\be} \mss \bar{\al}}^{\mss \bar{\la}} - \Ga_{\bar{\al} \mss \bar{\be}}^{\mss\bar{\la}} )
        d^{\bar{\mu}}_{\mss 0\bar{\la}\bar{\mu}}          
   +(p+\varrho) \Ga_{\bar{\al}\ms 0}^{\mss \bar{\la}} d^{0}_{\mss\bar{\la}0\bar{\be}} 
   -(p+\varrho) \Ga_{\bar{\be}\ms 0}^{\mss \bar{\la}} d^{0}_{\mss\bar{\la}0\bar{\al}} 
    \nonumber \\
& \hspace{0.5cm}    
   + e_{\bar{\al}}\big( \nu^2(p+\varrho) \big ) d^{\bar{\mu}}_{\mss 0 \bar{\be}\bar{\mu}}
   - e_{\bar{\be}}\big( \nu^2(p+\varrho) \big ) d^{\bar{\mu}}_{\mss 0 \bar{\al}\bar{\mu}}
              \Big ] = 0      \label{sub_6}
     \\   %p 174, 170, 180, 181
%%%%%%%%%%%%%%%%%%%%%%%%%%%%%
&  \ve_{ (\bar{\al}}^{\msb \bar{\la}\bar{\mu}} 
    \partial_t d^{0}_{\mss \bar{\nu} ) \bar{\la}\bar{\mu} } - e_{(\bar{\al}} 
    (  \ve_{\bar{\nu})}^{ \msb \bar{\la}\bar{\mu} }  d^{0}_{\ms \bar{\la}0\bar{\mu}} ) 
    +\frac{1}{2} \pi_{\bar{\al}\bar{\nu}} \ve^{\bar{\be}\bar{\mu}\bar{\la}} d^{0}_{\mss \bar{\mu}0\bar{\la}}
    e_{\bar{\be}}(\frac{1}{p+\varrho}) 
         \nonumber \\
&  \hspace{0.5cm}    
- \frac{1}{2} \frac{1}{p+\varrho} \pi_{\bar{\al}\bar{\nu}}
     \ve^{\bar{\la}\bar{\mu}\bar{\si} } R_{ \bar{\la}\bar{\mu}\bar{\si}}^{\ms\ms \tau } q_\tau   
    - \frac{1}{2}  \pi_{\bar{\al}\bar{\nu}}  \ve^{\bar{\be}\bar{\mu}\bar{\la}}
    \Big( \nu^2 \Ga_{\bar{\be}\msm \bar{\mu}}^{\ms 0}  
        d^{\bar{\xi}}_{\mss 0 \bar{\la}\bar{\xi}}   -
         \Ga_{\bar{\be}\msm \bar{\mu}}^{\ms \bar{\si} }   d^{0}_{\mss \bar{\si }0\bar{\la}} 
     \nonumber \\
&  \hspace{0.5cm}          
         +
    \nu^2 \Ga_{\bar{\be}\ms \bar{\la}}^{\ms 0 }  
     d^{\bar{\xi}}_{\mss 0 \bar{\mu}\bar{\xi}}   - 
       \Ga_{\bar{\be}\ms \bar{\la}}^{\mss \bar{\si} }  d^{0}_{\mss \bar{\mu}0\bar{\si}} 
     \Big )        
   + \frac{1}{2} \ve_{(\bar{\al}}^{\msb \bar{\be}\bar{\ga}} 
   \sum_{(0\bar{\be}\bar{\ga})} R^0_{\mss\bar{\nu})\si0} \cT^{\mss \si}_{\bar{\be}\msm \bar{\ga}}
      \nonumber \\
& \hspace{0.5cm}  
  + \frac{1}{4} \ve_{(\bar{\al}}^{\msb \bar{\be}\bar{\ga}} \ve_{\bar{\nu})}^{\ms 0 \tau \xi}   \ve_{0 \bar{\be}\bar{\ga}}^{\ms \msm \la}
   ( F_{\la\tau \xi} + g_{\la \tau} q_\xi )      
   + \frac{1}{2} \ve_{(\bar{\al}}^{\msb \bar{\be}\bar{\ga}} \Big ( 
    d^\si_{\mss \bar{\nu}) \bar{\be}\bar{\ga} } \Ga_{0 \ms \si}^{\ms 0}
    \nonumber \\
& \hspace{0.5cm}     
   + d^\si_{\mss \bar{\nu}) \bar{\ga} 0 } \Ga_{\bar{\be}  \ms \si}^{\ms 0}    
   + d^\si_{\mss \bar{\nu})  0 \bar{\be}} \Ga_{\bar{\ga}  \ms \si}^{\ms 0}
   \Big )       
   + \frac{1}{2} \ve_{(\bar{\al}}^{\msb \bar{\be}\bar{\ga}} \Big ( 
    \sum_{ \langle \bar{\nu} \bar{\be}\bar{\ga}\rangle } \Ga_{|0| \ms \bar{\nu} )}^{\msb \si} d^0_{\mss \si \bar{\be}\bar{\ga}}
     \nonumber \\
   & \hspace{0.5cm}    
   + \sum_{ \langle \bar{\nu} \bar{\ga} 0 \rangle } \Ga_{|\bar{\be}| \ms \bar{\nu} )}^{\msb \, \si} d^0_{\mss \si \bar{\ga} 0 }
   + \sum_{ \langle \bar{\nu}  0 \bar{\be} \rangle } \Ga_{|\bar{\ga}| \ms \bar{\nu} )}^{\msb \,\si} d^0_{\mss \si  0 \bar{\be}}
    \Big )
     = 0 \label{sub_7} % p. 166, 167, 182
      \\
%%%%%%%%%%%%%%%%%%%%%%%      
 & \partial_t q_{\bar{\al}} + \big( \Ga_{\bar{\al}\ms 0}^{\mss \bar{\la} } - 
       \Ga_{0 \ms \bar{\al} }^{\mss \bar{ \la } } \big) q_{\bar{\la}}   
       -2\nu^2(p+\varrho) d^{\bar{\ga}}_{\ms 0\bar{\al} \bar{\ga} } = 0,
         \label{sub_8}  %p. 55
\end{align}
\end{subequations}
where $(\cdot)_{(\al|\mu|\be)}$ (resp. $(\cdot)_{[\al|\mu|\be]}$)
means symmetrization (resp. anti-symmetrization) of the indices
$\al$ and $\be$ only, $\sum_{(\al\be\ga)}$ indicates
 sum over cyclic permutations of 
$\al\be\ga$,  and
\begin{gather}
\sum_{\langle \al \be \rangle} \Ga_{\tau \ms \al }^{\ms \la } A_{\la \be }
:= \Ga_{\tau \ms \al }^{\ms \la } A_{\la \be } + 
\Ga_{\tau \ms \be  }^{\ms \la } A_{\al \la}, 
\nonumber \\
\sum_{\langle \al \be \ga \rangle} \Ga_{\tau \ms \al }^{\ms \la } A_{\la \be \ga }
:=
\Ga_{\tau \ms \al }^{\ms \la } A_{\la \be \ga } +
\Ga_{\tau \ms \be }^{\ms \la } A_{\al \la  \ga } +
\Ga_{\tau \ms \ga }^{\ms \la } A_{\al \be \la } .
\nonumber
\end{gather}
\end{lemma}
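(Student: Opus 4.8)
The plan is to obtain (\ref{sub_1})--(\ref{sub_8}) as \emph{identities} satisfied by the ``zero quantities'' $F^\al_{\ \be\ga\de}$, $F_{\al\be\ga}$, $d^\al_{\ \be\ga\de}$, $\cT_{\al\be}^\mu$ and $q_\al$ of (\ref{Friedrichs_tensor}), (\ref{div_Friedrichs_tensor}), (\ref{decomposition}), (\ref{torsion}), (\ref{q_al}), using the reduced equations (\ref{reduced_system}) to express every time derivative. The computation is long but essentially forced; the only inputs beyond index bookkeeping are: (i) the first and second Bianchi identities for the metric-compatible but (a priori) non-torsion-free connection $\nabla$, together with the commutation rule $[\nabla_\mu,\nabla_\nu]A = -R\!\cdot\!A - \cT_{\mu\nu}^\si\nabla_\si A$; (ii) the Weyl decompositions of Lemma \ref{decomp_W_lemma} and the $\ve$-identities (\ref{identity_ve_1})--(\ref{identity_ve_2}); (iii) Einstein's equations in the form (\ref{Einstein_eq_equivalent}); and (iv) Lemma \ref{lemma_prep}, which already supplies $q_0=0$, $\cT_{0\bar\al}^\be = 0$, $d^{\bar\al}_{\bar\be0\bar\ga}=0$ and, crucially, $\nabla^\mu T_{\mu\al} = q_\al$. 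This is precisely the subsidiary-system computation of Friedrich in \cite{Fri, FriRen, FriNagy}, so I would present it in that spirit, highlighting the structure and the main cancellations rather than reproducing every manipulation; organizing the work so that the right-hand sides close up on $F$, $\cT$, $d$, $q$ is exactly what turns (\ref{subsidiary}) into a homogeneous symmetric hyperbolic system in the next step.

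Concretely I would proceed as follows. \textbf{The Friedrich-tensor equations (\ref{sub_1})--(\ref{sub_2}).} Starting from $F_{\nu\be\ga} = \nabla_\mu F^\mu_{\ \nu\be\ga} = \nabla_\mu W^\mu_{\ \nu\be\ga} - \nabla_\mu(g^\mu_{[\be}S_{\ga]\nu})$, differentiate once more along $e_0$, commute derivatives (picking up curvature and torsion via (i)), use the once-contracted second Bianchi identity to trade $\nabla\nabla W$ for an expression in $d$, $W$, $S$, $\cT$, and convert the Schouten/matter contributions into $q$-terms by (\ref{Einstein_eq_equivalent}) and $\nabla^\mu T_{\mu\al}=q_\al$. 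Decomposing the resulting identity into its $u$-electric part $F_{0\bar\al 0}$ and $u$-magnetic part $\frac{1}{2}\ve_{\bar\ga}^{\bar\mu\bar\nu}F_{0\bar\mu\bar\nu}$ (just as for $E,B$), and writing $\nabla_0 = \partial_t + (\Ga\text{-terms})$, $\nabla_{\bar\mu} = \D_{\bar\mu} + (\Ga\text{-terms})$ in fluid source gauge, yields (\ref{sub_1}) and (\ref{sub_2}). \textbf{The torsion equation (\ref{sub_3}).} This is the first Bianchi identity for $\nabla$: $\sum_{(0\bar\be\bar\ga)} R^\mu_{0\bar\be\bar\ga} = \sum_{(0\bar\be\bar\ga)}(\nabla_0\cT_{\bar\be\bar\ga}^\mu + \cT_{0\bar\be}^\la\cT_{\bar\ga\la}^\mu)$; substitute $R = d + W + (\text{Schouten part})$, note that the cyclic sum of the Weyl-plus-Schouten part vanishes by its symmetries, and replace $\nabla_0\cT$ by $\partial_t\cT$ plus connection terms. \textbf{The $d$-equations (\ref{sub_4})--(\ref{sub_7}).} Apply the second Bianchi identity to $R = d + W + (\text{Schouten})$; feed the reduced equations (\ref{reduced_eq_E})--(\ref{reduced_eq_B}) into the $W$-part (these are the $u$-frame decomposition of the reduced Bianchi equation for $E$ and $B$), and (\ref{reduced_eq_rho})--(\ref{reduced_eq_r}), (\ref{Einstein_eq_equivalent}), $\nabla^\mu T_{\mu\al}=q_\al$ into the Schouten part; then project onto the $u$-frame, organized by electric/magnetic type and by which indices are $0$ or barred, using $\cT_{0\bar\al}^\be=0$ and $d^{\bar\al}_{\bar\be0\bar\ga}=0$ from Lemma \ref{lemma_prep} to discard the components already known to vanish, to obtain (\ref{sub_4})--(\ref{sub_7}). \textbf{The $q$-equation (\ref{sub_8}).} Differentiate $q_{\bar\al}$ in (\ref{q_al}) along $e_0$ using the reduced equations for the matter variables and for $\Ga_{0\bar\al}^0$ (which encode the Euler momentum law), together with $\nabla^\mu T_{\mu\al}=q_\al$ and the twice-contracted Bianchi identity for $\nabla$ (which, $\nabla$ not being Levi-Civita, contributes $d$- and $\cT$-terms); after using $q_0=0$ this collapses to (\ref{sub_8}).

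I expect the main obstacle to be the faithful handling of torsion. Since $\nabla$ is only known to be metric-compatible at this stage, every commutation of covariant derivatives and every appeal to a Bianchi identity produces $\cT$- and curvature-corrections that must be carried along; it is easy to slip and use a torsion-free identity, thereby losing the very terms that make the subsidiary system close on $F$, $\cT$, $d$, $q$. Keeping the computation organized by the $u$-electric/$u$-magnetic splitting (so that (\ref{sub_1})--(\ref{sub_2}) and the $d$-equations take the Maxwell-like shape visible in the statement), systematically rewriting each emerging curvature as $d + W + \text{Schouten}$ and each matter divergence as $q$ via Lemma \ref{lemma_prep}, is what forces all apparent source terms to reassemble into multiples of $F$, $d$, $\cT$ and $q$ --- which is the assertion (\ref{subsidiary}). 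No individual step is conceptually hard; the difficulty is volume and not dropping torsion corrections, and throughout we follow the blueprint of \cite{Fri, FriRen, FriNagy}.
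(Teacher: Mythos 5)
Your proposal is correct and follows essentially the same route as the paper: both derive (\ref{subsidiary}) as identities for the zero quantities by combining the first and second Bianchi identities for the metric, torsion-carrying connection with the decomposition (\ref{decomposition}), the Weyl splitting of Lemma \ref{decomp_W_lemma}, Einstein's equations in the form (\ref{Einstein_eq_equivalent}), the relation $\nabla^\mu T_{\mu\al}=q_\al$ from Lemma \ref{lemma_prep}, and the reduced equations to eliminate time derivatives. The only places where your sketch is looser than the paper's execution are (\ref{sub_1})--(\ref{sub_2}), where the paper computes the divergence $\nabla^\mu F_{\mu\al\be}$ in two ways --- once via commutators and once from the algebraic decomposition (\ref{decomp_F_lemma_2}) of $F_{\mu\al\be}$ forced by (\ref{reduced_eq_E})--(\ref{reduced_eq_B}) --- and (\ref{sub_6}), which additionally uses the relations (\ref{using_red_1})--(\ref{using_red_2}) together with the Lie-derivative identity for $\nabla_{[\bar{\al}}q_{\bar{\be}]}$; these are refinements of, not departures from, your strategy.
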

\begin{proof}
We start computing $\nabla^\mu F_{\mu \al \be}$. Commuting the covariant
derivatives and using the symmetries of $W^\mu_{\ms \al\be \ga}$ and $S_{\al \be}$ we find
\begin{align}
\begin{split}
2 \nabla^\mu F_{\mu \al\be}  = &
- R^{\mu \ms \nu \si}_{\mss \nu} W^{}_{\si \mu \al \be}
+ R^{\mu \ms \nu \si}_{\mss \si} W^{}_{\nu \mu \al \be} 
+ 
R^{\si  \ms \mu \nu }_{\mss \al} W^{}_{\mu \nu \si \be}
\\
 & 
- R^{\si  \ms \mu \nu }_{\mss \be} W^{}_{\mu \nu \si \al}  
  + \cT_{\mu \msm \nu}^{\mss \si} \nabla_\si W^{\mu\nu}_{\ms\mss \al\be}
 + \nabla_\al \nabla^\mu S_{\be\mu} - \nabla_\be \nabla^\mu S_{\al \mu}
\\
& 
  - R^{\mu \ms \nu}_{\mss \be \mss \al} S_{\mu\nu} 
 + R^{\mu \ms \nu}_{\mss \al  \mss \be} S_{\mu\nu} 
  - R^{\mu \ms \nu}_{\mss \nu \mss \al} S_{\be \mu} 
 + R^{\mu \ms \nu}_{\mss \nu  \mss \be} S_{\al\mu} 
 \\
 &
  + \nabla_\mu S^{\nu}_{\ms \be} \cT_{\al \msm \nu}^{\mss \mu} 
 - \nabla_\mu S^{\nu}_{\ms \al} \cT_{\be \msm \nu}^{\mss \mu} .
\end{split}
\nonumber
\end{align}
Using (\ref{decomposition}), (\ref{Einstein_eq_equivalent}) and 
(\ref{div_T_q}) and the various symmetries of the tensors involved,
 the above becomes (recall that $T$ is the trace of $T_{\al\be}$)
 \begin{align}
 \begin{split}
 \nabla^\mu F_{\mu \al\be}  = &
  - d^{\mu \msm \nu \si}_{\mss [\nu} W^{}_{\si ]\mu \al \be}
+  W^{}_{\mu \nu \si [\be} d^{\si  \ms \mu \nu }_{\mss \al]}
 + \frac{1}{2}\cT_{\mu \msm \nu}^{\mss \si} \nabla_\si W^{\mu\nu}_{\ms\mss \al\be}
 \\
& 
 -d^{\mu \msm \nu}_{\mss [\be \mss \al]} S_{\mu\nu} 
 - W^{\mu \msm \nu}_{\mss [\be \mss \al]} S_{\mu\nu} 
+ d^{\mu \msm \nu}_{\ms \nu  \mss [\al } S_{\be]\nu}^{} 
  + \frac{1}{2} \K \Big [
\\
&   
   (\nabla_\mu p + \nabla_\mu \varrho) ( \cT_{\al \msm 0}^{\mss \mu} \de_{0\be} 
 + \cT_{\be \msm 0}^{\mss \mu} \de_{0\al} )
 +\nabla_\mu p ( \cT_{\be \msm \al}^{\mss \mu}   - \cT_{\al \msm \be}^{\mss \mu}  ) \Big ]
 \\
 & 
 -\frac{1}{6}\K  ( \cT_{\al \msm \be}^{\mss \mu}   - \cT_{\be \msm \al}^{\mss \mu}  ) \nabla_\mu T
 + \frac{1}{6} \K \cT_{\al \msm \be}^{\mss \mu} \nabla_\mu T + \K \nabla_{[\al} q_{\be]}.
 % W^{}_{\mu\nu\si[\be} W^{\si \ms \mu \nu}_{\ms \al ]}  this vanishes by symmetries
 \end{split}
 \label{div_F_lemma}
 \end{align}
 On the other hand, using (\ref{div_T_q}) 
  into $F_{\mu\al\be}$ gives, after some contractions,
\begin{align}
-\frac{1}{2} \K q_\al = 
\pi_\mu^{\mss \si} \pi_\nu^{\mss \tau} \pi_\al^{\mss \la}  F_{\si\tau\la} \pi^{\mu\nu}
- \pi_\al^{\mss \la} F_{\mu \la \nu} u^\mu u^\nu.
\label{decomp_F_lemma}
\end{align}
 Then, from (\ref{decomp_W}), (\ref{reduced_eq_E}), (\ref{reduced_eq_B}),
 (\ref{reduced_eq_rho}) and 
 (\ref{decomp_F_lemma}), we obtain, after some algebra, 
 \begin{align}
 \begin{split}
 F_{\mu\al\be} = &\,
 \pi_\al^{\mss \la} F_{\si \la \nu} u^\si u^\nu u_\be
 - \pi_\be^{\mss \la} F_{\si \la \nu} u^\si u^\nu u_\al 
 + \frac{1}{2} \pi_{\mu\al} \pi_\be^{\mss \la} F_{\mu \la \nu} u^\mu u^\nu
 \\
 &  
 -  \frac{1}{2} \pi_{\mu\be} \pi_\al^{\mss \la} F_{\mu \la \nu} u^\mu u^\nu
 -\frac{1}{2} u_\mu \ve^\nu_{\mss \al \be} \ve_{\nu}^{\mss \si\tau}
\pi_\si^{\mss \la} \pi_\tau^{\mss \xi} F_{\ga\la\xi}u^\ga
 \\
 &  
+ 
\frac{1}{2}\ve^\nu_{\mss \mu [\al}  u_{\be]}^{} 
\ve_{\nu}^{\mss \si\tau}
\pi_\si^{\mss \la} \pi_\tau^{\mss \xi} F_{\ga\la\xi}u^\ga
- \frac{1}{2} \K \pi_{\mu [\al} q_{\be]}.
\end{split} 
\label{decomp_F_lemma_2}
 \end{align}
Computing $\nabla^\mu F_{\mu\al\be}$ from 
(\ref{decomp_F_lemma_2}), using the  resulting expression into 
(\ref{div_F_lemma}), recalling our gauge conditions,
and evoking lemma \ref{lemma_prep} leads to (\ref{sub_1}) and (\ref{sub_2}) after 
suitably choosing the indices to correspond to the ones of those expressions.

When the torsion of $\nabla$ does not necessarily vanish, the first Bianchi identity
takes the form
\begin{gather}
\sum_{(\al\be\ga)} R^\mu_{\mss \al\be\ga} 
= \sum_{(\al\be\ga)} \Big ( \nabla_\al \cT_{\be \msm \ga}^{\mss \mu}
- \cT_{\al \msm \be}^{\mss\la} \cT_{\ga \msm \la }^{\mss\mu} \Big ).
\label{first_Bianchi_torsion}
\end{gather}
After evoking (\ref{decomp_W}), using  symmetries, 
and setting $\al = 0$, $\be =\bar{\be}$ and $\ga = \bar{\ga}$,
 this expression simplifies
to 
\begin{gather}
\nabla_0 \cT_{\bar{\be} \msm \bar{\ga} }^{\mss \mu} = 
- \nabla_{\bar{\be}} \cT_{\bar{\ga} \msm 0}^{\mss \mu}
- \nabla_{\bar{\ga}} \cT_{0 \msm \bar{\be}}^{\mss \mu}
+
\sum_{(0 \bar{\be} \bar{\ga}) } \Big ( d^\mu_{\mss 0 \bar{\be} \bar{\ga} } 
+ \cT_{0 \msm \bar{\be}}^{\mss\la} \cT_{\bar{\ga} \msm \la }^{\mss\mu} \Big ).
\nonumber
\end{gather}
In view of (\ref{torsion_reduced}), this  gives (\ref{sub_3}).

Taking torsion into account again, the second Bianchi identity reads
\begin{gather}
\sum_{(\al\be\ga)} \nabla_\al R^\mu_{\mss \nu \be\ga} 
+ 
\sum_{(\al\be\ga)} R^\mu_{\mss \nu \la \al } \cT_{\be \msm \ga}^{\mss \la}
=0.
\label{second_Bianchi_torsion}
\end{gather}
From the symmetries of $W^\al_{\mss \be\ga\de}$,
$d^\al_{\mss \be\ga\de}$
and (\ref{first_Bianchi_torsion}), we have the identity
\begin{gather}
\sum_{(\al\be\ga)} \nabla_\al W^\mu_{\mss \nu \be\ga} 
+ \frac{1}{2}\ve^{\la \si \mu}_{\msm\ms\nu} \, \ve_{\al \be\ga}^{\msm\ms\xi }
\, \nabla_\tau W^\tau_{\mss \xi \la \si} = 0.
\label{identity_sym_nabla_W}
\end{gather}
%compare to Nagy 637
From (\ref{second_Bianchi_torsion}), (\ref{decomposition}), 
(\ref{decomp_W}), (\ref{decomp_W_dual}), 
(\ref{identity_sym_nabla_W}), 
(\ref{Einstein_eq_equivalent}),  and (\ref{div_T_q}), we get, after some simplifications,
\begin{align}
\begin{split}
& \sum_{(\al\be\ga)} \nabla_\al d^\mu_{\mss \nu \be\ga} 
+ \sum_{(\al\be\ga)} R^\mu_{\mss \nu \la \al } \cT_{\be \msm \ga}^{\mss \la}
\\
& \hspace{0.5cm}
 +\frac{1}{2} \ve_{\al\be\ga}^{\ms\ms \la}
 (F_{\la \tau \xi} + g_{\la\tau} q_\xi )\ve_\nu^{\mss \mu \tau \xi} = 0.
 \end{split}
 \label{basic_identity_d_sub}
\end{align}
Setting $\al=0$, $\mu = \bar{\mu}$, $\nu=\bar{\nu}$, $\be = \bar{\be}$ and
$\ga=\bar{\ga}$ in (\ref{basic_identity_d_sub}) and using 
(\ref{d_reduced}) produces (\ref{sub_4}).

From the definition of $q_\al$, compute 
$\nabla_{[\bar{\al}} q_{\bar{\be]}}$, use (\ref{reduced_eq_Ga_0}) 
and $\nu^2 > 0$ to find
\begin{gather}
(p+\varrho)  d^{0}_{\mss \bar{\al} 0 \bar{\be}} 
= -\nabla_{[\bar{\al}} q_{\bar{\be]}},
\label{using_red_1}
\end{gather}
which implies 
\begin{gather}
d^{0}_{\mss \bar{\al} 0 \bar{\be}}  + d^{0}_{\mss \bar{\be} 0 \bar{\al}} = 0.
\label{anti_sym_d_00}
\end{gather}
Then  (\ref{anti_sym_d_00}), (\ref{identity_ve_1}) and the symmetries of 
$d^\al_{\mss\be\ga\de}$
 imply the identity
\begin{gather}
g^{\bar{\ga}\bar{\nu} } e_{\bar{\ga}} (d^0_{\ms \bar{\nu}  0 \bar{\be} } )- 
g^{\bar{\ga}\bar{\nu} }  e_{\bar{\be}} (d^0_{\ms \bar{\nu} 0 \bar{\ga} } ) 
= - \frac{1}{2} \ve_{\bar{\be}}^{\mss\bar{\la}\bar{\tau}} 
e_{\bar{\la}}( \ve_{\bar{\tau}}^{\mss \bar{\mu}\bar{\xi}} d^0_{\mss\bar{\mu}0\bar{\xi}} ).
\label{identity_2_e_e}
 \end{gather}
From  (\ref{basic_identity_d_sub}) with $\al=0$, $\mu = 0$ $\nu=\bar{\nu}$, $\be = \bar{\be}$ and
$\ga=\bar{\ga}$, 
\begin{align}
& 
e_0 (d^0_{\ms \bar{\nu}\bar{\be}\bar{\ga} } ) 
  + e_{\bar{\be}} (d^0_{\ms \bar{\nu} \bar{\ga} 0} )
  + e_{\bar{\ga}} (d^0_{\ms \bar{\nu}  0 \bar{\be} } )
  + \sum_{(0\bar{\be}\bar{\ga} )} R^{0}_{\mss \bar{\nu} \si  0 }
    \cT_{\bar{\be} \ms \bar{\ga}}^{\mss \si}  
    + \Ga_{0\ms \si}^{\mss 0}  d^{\si}_{\mss \bar{\nu}\bar{\be}\bar{\ga}}
 \nonumber \\  
 & 
 \hspace{0.5cm}
     + \Ga_{\bar{\be} \mss \si}^{\mss 0}  d^{\si}_{\mss \bar{\nu}\bar{\ga} 0 }
    + \Ga_{\bar{\ga} \mss \si}^{\mss 0}     d^{\si}_{\mss \bar{\nu}0 \bar{\be} } 
      -   \sum_{\langle \bar{\nu}\bar{\be}\bar{\ga} \rangle} 
    \Ga_{0 \ms \bar{\nu}}^{\mss \si} d^{0}_{\mss \si \bar{\be}\bar{\ga}}
     - \sum_{\langle \bar{\nu}\bar{\ga} 0 \rangle} 
     \Ga_{ \bar{\be} \mss \bar{\nu}}^{\mss \si} d^{0}_{\mss \si \bar{\ga} 0 }
\nonumber    \\
&  
  \hspace{0.5cm}
      -  \sum_{\langle \bar{\nu}0 \bar{\be} \rangle} 
     \Ga_{ \bar{\ga} \mss \bar{\nu}}^{\mss \si} d^{0}_{\mss \si  0 \bar{\be}}
            + \frac{1}{2} \ve_{0 \bar{\be} \bar{\ga}}^{\msb\mss \bar{\la}}(F_{\bar{\la}\tau \xi} + 
    g_{\bar{\la} \tau} q_\xi  )  \ve_{ \bar{\nu}  }^{\mss 0  \tau \xi}  = 0.
    \label{basic_sym_d_expanded}
\end{align}
Using 
 (\ref{gamma_2_indices}), (\ref{gamma_sym_two_bar}), 
(\ref{gamma_sym_spatial}) and 
(\ref{gamma_sym_two_0}), 
observing that $R^0_{\mss\bar{\nu}\si \la } = R^{\bar{\nu}}_{\mss 0 \si \la }$,
$d^0_{\mss\bar{\nu}\si \la } = d^{\bar{\nu}}_{\mss 0 \si \la }$, using 
(\ref{identity_2_e_e}) into  (\ref{basic_sym_d_expanded}), and contracting 
in $\bar{\nu}$ and $\bar{\ga}$ produces
(\ref{sub_5}).

From (\ref{reduced_eq_Ga_0_0})  and the definition of $q_\al$ we find
\begin{gather}
(p+\varrho) \nu^2 d^{\bar{\nu}}_{\mss 0 \bar{\al} \bar{\nu}} = \nabla_{[0} q_{ \bar{\al}] }.
\label{using_red_2}
\end{gather}
From (\ref{using_red_1}) and (\ref{using_red_2}) we then obtain
\begin{gather}
e_0\big( \nabla_{[\bar{\al}} q_{\bar{\be}]} \big)
= -d^0_{\mss \bar{\al}0\bar{\be}} e_0(p+\varrho) 
- (p+\varrho) e_0 (d^0_{\mss \bar{\al}0\bar{\be}} ),
\label{Lie_1}
\\
\nabla_{\bar{\al}} \nabla_{[0} q_{ \bar{\be}] }
= e_{\bar{\al}}\big( \nabla_{[0} q_{ \bar{\be}] } \big )
- \Ga_{\bar{\al} \ms \bar{\be}}^{\ms \la} \nabla_{[0} q_{ \la ] }
- \Ga_{\bar{\al} \ms 0 }^{\ms \la} \nabla_{ [ \la} q_{ \bar{\be}  ]}, 
\label{Lie_2}
\\
\nabla_{\bar{\be}} \nabla_{[0} q_{ \bar{\al}] }
= e_{\bar{\be}}\big( \nabla_{[0} q_{ \bar{\al}] } \big )
- \Ga_{\bar{\be} \ms \bar{\al}}^{\ms \la} \nabla_{[0} q_{ \la ] }
- \Ga_{\bar{\be} \ms 0 }^{\ms \la} \nabla_{ [ \la} q_{ \bar{\al }  ]}.
\label{Lie_3}
\end{gather}
Recall the following identity for the Lie derivative $\cL$
%this does not require Levi-Civita; e.g. Wald p. 441
\begin{gather}
\cL_{e_0} \nabla_{[\bar{\al}} q_{\bar{\be}]} 
=  \nabla_0 \nabla_{[\bar{\al}} q_{\bar{\be}]} +
\nabla_{[\bar{\al}} q_{\mu ]} \Ga_{\bar{\be}\ms 0}^{\ms \mu} 
+
\nabla_{[\mu }q_{ \bar{\be} ]} \Ga_{\bar{\al}\ms 0}^{\ms \mu} .
\label{Lie_derivative_q_al}
\end{gather}
Now compute the left hand side of (\ref{Lie_derivative_q_al}) directly from 
(\ref{q_al}), use (\ref{Lie_1}), (\ref{Lie_2}), (\ref{Lie_3}), contract 
the resulting expression with $\ve_{\bar{\nu}}^{\mss \bar{\al}\bar{\be}}$,
and evoke (\ref{basic_identity_d_sub}) once again to find (\ref{sub_6}).

Next, set $\al =0$, $\mu=0$, $\nu=\bar{\nu}$, $\be=\bar{\be}$ and $\ga=\bar{\ga}$ in
(\ref{basic_identity_d_sub}), contract with $\ve_{\bar{\al}}^{\mss\bar{\be}\bar{\ga}}$,
symmetryze on $\bar{\al}$ and $\bar{\nu}$, and use (\ref{using_red_1}) and (\ref{using_red_2})
to obtain (\ref{sub_7}).

Finally, (\ref{sub_8}) follows from (\ref{using_red_2})
and lemma \ref{lemma_prep}.
\end{proof}

Next, we show that the tensors $d^\al_{\mss \be\ga\de}$, $F_{\al\be\ga}$ and 
$\cT_{\al \msm \be}^{\mss \ga}$ vanish on $\cM$.

\begin{prop}
With the above definitions, 
\begin{gather}
 d^\al_{\mss \be\ga\de} = 0, \, F_{\al\be\ga} = 0, \, 
\cT_{\al \msm \be}^{\mss \ga} = 0, \, \text{ and } \, q_{\al} = 0
\nonumber
\end{gather}
on $\cM$.
\label{prop_constraints_vanish}
\end{prop}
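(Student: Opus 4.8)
The plan is to read the system (\ref{subsidiary}) of the preceding lemma as a \emph{homogeneous} first order symmetric hyperbolic system for the ``constraint quantities'' $d^\al_{\ms\be\ga\de}$, $F_{\al\be\ga}$, $\cT_{\al \msm \be}^{\mss \ga}$, $q_\al$, to observe that these quantities vanish on the initial slice, and then to invoke uniqueness for symmetric hyperbolic systems.

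Concretely, I would collect into a single unknown $y$ the independent components that are not already known to vanish by lemma \ref{lemma_prep} — essentially $\cT_{\bar{\be} \msm \bar{\ga}}^{\mss \mu}$, the spatial and mixed components of $d^\al_{\ms\be\ga\de}$ (the rest being recovered from the Riemann-type symmetries of lemma \ref{lemma_prep} and from (\ref{d_reduced})), the components $F_{0\bar{\al}0}$ and $F_{0\bar{\al}\bar{\be}}$ (the rest recovered through (\ref{decomp_F_lemma})--(\ref{decomp_F_lemma_2})), and $q_{\bar{\al}}$ (with $q_0\equiv 0$ by lemma \ref{lemma_prep}). The structural point is that every term of (\ref{sub_1})--(\ref{sub_8}) carries at least one factor of a component of $d^\al_{\ms\be\ga\de}$, $F_{\al\be\ga}$, $\cT_{\al \msm \be}^{\mss \ga}$ or $q_\al$, so (\ref{subsidiary}) is linear and homogeneous in $y$, its coefficients being assembled algebraically, and through one spatial differentiation of $\pi$, $\ve$ and $W$, out of the solution $z$ of proposition \ref{prop_well_pos_reduded} — hence lying in the function spaces provided there. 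One then checks that (\ref{subsidiary}) is symmetric hyperbolic with exactly the principal structure of the reduced system: after rewriting $\D_{\bar{\mu}}$ and $e_{\bar{\mu}}$ via (\ref{frame_coefficients}) in terms of $\frac{\partial}{\partial x^A}$, the coefficient of $\partial_t$ is again a block matrix whose only nontrivial blocks are $1$, $\nu^2$, the $E$--$B$ block $m^t_{\bar{\al}\bar{\be}}$ of (\ref{matrix_M_0})--(\ref{matrix_M_0_E_B}), and the $2\times 2$ block built from $1$, $\nu^2$ and the off-diagonal $-\nu^2 e^0_{\ms \bar{\be}}$ coming from the mixed derivatives in (\ref{sub_1}), (\ref{sub_2}), (\ref{sub_6}), (\ref{sub_7}); this matrix is positive definite on $[0,T_E^\prime]\times\Si$, after shrinking $T_E^\prime$ if needed, by the continuity argument of proposition \ref{prop_well_pos_reduded} ($\nu^2>0$ and $\Si_t$ spacelike for $g_t$), and the spatial matrices are symmetric as there.

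Next I would show $\left.y\right|_\Si=0$. By lemma \ref{lemma_prep} one already has $q_0\equiv 0$, $\cT_{0 \msm \bar{\al}}^{\mss \be}\equiv 0$ and $d^{\bar{\al}}_{\ms\bar{\be}0\bar{\ga}}\equiv 0$; the remaining components on $\Si$ are evaluated from the construction of the reduced initial data in proposition \ref{prop_initial_data}. There $\left.\cT_{\bar{\al} \msm \bar{\be}}^{\mss \mu}\right|_\Si=0$ because the initial connection coefficients arise through (\ref{transformation_connection}) from the torsion-free Levi-Civita data of $(\Si,g_0,\kappa)$; $\left.d^\al_{\ms\be\ga\de}\right|_\Si=0$ because $\left.E\right|_\Si$ and $\left.B\right|_\Si$ were \emph{defined} so that the decomposition (\ref{decomposition}) holds on $\Si$, via the Gauss, Codazzi and Ricci relations (\ref{Weyl_constraint_1})--(\ref{Weyl_constraint_3}) and the constraints (\ref{hamilonian_constraint})--(\ref{momentum_constraint}); $\left.F_{\al\be\ga}\right|_\Si=0$ then follows from the once-contracted Bianchi identity on $\Si$ together with (\ref{Einstein_eq_equivalent}), and for the remaining components from (\ref{decomp_F_lemma_2}); and $\left.q_{\bar{\al}}\right|_\Si=0$ since in our gauge $q_{\bar{\al}}=-(p+\varrho)\Ga_{0\ms \bar{\al}}^{\mss 0}-\nabla_{\bar{\al}}p$ while $\left.\Ga_{0\ms \bar{\al}}^{\mss 0}\right|_\Si$ is fixed in proposition \ref{prop_initial_data} through the momentum equation (\ref{Euler_2}) restricted to $\Si$.

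Since $y\equiv 0$ is manifestly a solution of (\ref{subsidiary}) with these vanishing data, uniqueness for first order symmetric hyperbolic systems (applied on local patches and glued as in proposition \ref{prop_well_pos_reduded}) forces $y\equiv 0$ on $\cM=[0,T_E^\prime]\times\Si$; the remaining components of $d^\al_{\ms\be\ga\de}$, $F_{\al\be\ga}$, $\cT_{\al \msm \be}^{\mss \ga}$ then vanish by their symmetries and by (\ref{decomp_W})--(\ref{decomp_F_lemma_2}), giving $d^\al_{\ms\be\ga\de}=F_{\al\be\ga}=\cT_{\al \msm \be}^{\mss \ga}=q_\al=0$ on $\cM$. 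The main obstacle I expect is the bookkeeping: verifying that (\ref{subsidiary}) genuinely closes as a homogeneous symmetric hyperbolic system in the chosen unknowns (no stray term failing to be proportional to one of $d^\al_{\ms\be\ga\de}$, $F_{\al\be\ga}$, $\cT_{\al \msm \be}^{\mss \ga}$, $q_\al$, and honest symmetrizability of the principal part despite the $e^0_{\ms \bar{\be}}$ off-diagonal entries and the $\ve$-contractions), and the component-by-component vanishing on $\Si$ extracted from proposition \ref{prop_initial_data}. Both are purely computational and follow the pattern of \cite{Fri, FriRen, FriNagy}, but that is where essentially all the work lies.
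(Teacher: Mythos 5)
Your proposal is correct and follows essentially the same route as the paper: the system (\ref{subsidiary}) is read as a linear homogeneous symmetric hyperbolic system for the independent constraint components (the paper's list (\ref{quantities_system_sub}) uses certain $\ve$-contracted combinations of $d^\al_{\mss\be\ga\de}$ as unknowns and recovers the rest from the identities at the end, matching your ``remaining components by symmetries'' step), the initial vanishing is extracted from the construction of proposition \ref{prop_initial_data} in the adapted frame, and uniqueness closes the argument. The only cosmetic difference is that the paper deduces $q_{\bar\al}=0$ on $\Si$ from the trace identity $g^{\al\be}F_{\al\be\ga}=-\frac{1}{2}\K q_\ga$ together with the already-established vanishing of $F_{0\bar\al 0}$ and $F_{0\bar\al\bar\be}$, rather than from your direct gauge expression for $q_{\bar\al}$.
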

\begin{proof}
 In light of our gauge choice, lemma \ref{lemma_prep}, and the symmetries 
involved, several components of the above tensors vanish identically 
on $\cM$. Taking into account the symmetries of the remaining components, it 
is seen that to show the proposition it suffices to prove that 
the components 
\begin{gather}
 \cT_{\bar{\al} \msm \bar{\be}}^{\mss \ga},
\, F_{0 \bar{\al} 0}, \, F_{0 \bar{\al} \bar{\be} }, \,
F_{\bar{\al}\bar{\be}\bar{\ga}}, \, 
 d^0_{\mss  \bar{\al} 0 \bar{\be}}, \,  
d^0_{\mss \bar{\al} \bar{\be} \bar{\ga}}, \,
d^{\bar{\mu}}_{\mss \bar{\al} \bar{\be} \bar{\ga}}, \,
 \, \text{and} \, \, q_{\bar{\al}}
 \label{vanishing_constraints}
\end{gather}
vanish on $\cM$.

Recalling that $F_{\al \be \ga} = - F_{\al \ga \be}$, equations 
(\ref{subsidiary}) can be viewed as a system for the quantities
\begin{gather}
 \cT_{\bar{\al} \msm \bar{\be}}^{\mss \ga},
\, F_{0 \bar{\al} 0}, \, F_{0 \bar{\al} \bar{\be} }, \,
d^{\bar{\mu}}_{\mss \bar{\al} \bar{\be} \bar{\ga}}, \,
d^{\bar{\nu}}_{\mss 0 \bar{\al} \bar{\nu}}, \,
\ve_{\bar{\nu}}^{\mss \bar{\al}\bar{\be} } d^0_{\mss  \bar{\al} 0 \bar{\be}}, \,  
\ve_{(\bar{\nu}}^{\msm \bar{\be}\bar{\ga}} d^0_{\mss \bar{\al}) \bar{\be} \bar{\ga}},
\,\,  \text{and} \,\,
q_{\bar{\al}}.
\label{quantities_system_sub}
\end{gather}
Arguing as in the proof of proposition \ref{prop_well_pos_reduded}, we obtain 
that (\ref{subsidiary}) is a first order symmetric hyperbolic system.
Uniqueness of solutions then implies that 
all quantities in (\ref{quantities_system_sub}) vanish on $\cM$ if they vanish 
on $\Si$. To show that this is the case, it is useful to introduce the adapted
coordinates $\{ \wx^A \}_{A=0}^3$ and frame $\{ \we_\mu \}_{\mu = 0}^3$
as in the proof of proposition \ref{prop_initial_data}, and we shall employ
the same notation and conventions as used 
there\footnote{As some identities of the proof of proposition \ref{prop_initial_data} will be 
evoked as well, it should be noticed that these are still valid
assuming only the hypotheses of this section.}.

From (\ref{torsion_reduced}), it follows that
\begin{gather}
 \widetilde{\cT}_{\al \msm \be}^{\mss \ga} \widetilde{u}^\al = 0.
\nonumber
\end{gather}
Using (\ref{Lorentz}), this gives 
\begin{gather}
 \widetilde{\cT}_{0 \msm \bar{\be}}^{\mss \ga} \La^0_{\mss 0}
+ \widetilde{\cT}_{\bar{\al} \msm \bar{\be}}^{\mss \ga} \La^{\bar{\al}}_{\ms 0} = 0.
\nonumber
\end{gather}
But since the connection on three manifold $(\Si, g_0)$ is torsion free,
we have $\left. \widetilde{\cT}_{\bar{\al} \msm \bar{\be}}^{\mss \bar{\ga}} \right|_\Si = 0$,
whereas $\left. \widetilde{\cT}_{\bar{\al} \msm \bar{\be}}^{\mss 0} \right|_\Si = 0$
by (\ref{conn_coeff_tilde_gauge}). Therefore,
$\left. \widetilde{\cT}_{0 \msm \bar{\be}}^{\mss \ga} \right|_\Si = 0$ since 
$ \La^0_{\mss 0} \neq 0$. We conclude that $\widetilde{\cT}_{\al \msm \be}^{\mss \ga}$, 
and thus $\cT_{\al \msm \be}^{\mss \ga}$, vanishes on $\Si$.

From  (\ref{decomp_F_lemma_2}) and the way
the initial data was constructed in proposition (\ref{prop_initial_data}),
we find, after a somewhat lengthy but not difficult calculation, that
\begin{gather}
F_{0\bar{\al}0} = 0 = F_{0 \bar{\al}\bar{\be}} \text{ on } \Si.
\label{use_constraints_vanish_F}
\end{gather}
Using corollary \ref{coro_trace_eq_state} and lemma \ref{decomp_W_lemma}, we obtain,
with the help of (\ref{div_T_q}),
\begin{gather}
g^{\al\be} F_{\al\be\ga} = -\frac{1}{2}\K q_\ga.
\nonumber
\end{gather}
In light of (\ref{use_constraints_vanish_F}) and (\ref{decomp_F_lemma_2}),
this gives 
\begin{gather}
q_{\bar{\ga}} = 0 \text{ on } \Si,
\nonumber
\end{gather}
which then implies, upon employing (\ref{decomp_F_lemma_2}) one more time, that
\begin{gather}
F_{\bar{\al}\bar{\be}\bar{\ga}} = 0 \text{ on } \Si,
\nonumber
\end{gather}
where (\ref{use_constraints_vanish_F}) has been used.

Finally, using the Gauss equation and arguing as in proposition 
\ref{prop_initial_data}, we conclude that the constraints implied by the
decomposition of the Riemann tensor (i.e., (\ref{decomposition}) 
with $d^\al_{\mss \be\ga\de} = 0$) are satisfied on $\Si$. This, combined
with (\ref{d_reduced}), gives the vanishing of $d^\al_{\mss \be\ga\de}$ on 
$\Si$.

We conclude that the quantities (\ref{quantities_system_sub})
vanish on $\cM$. This implies, evoking (\ref{decomp_F_lemma_2})
once more, that $F_{\bar{\al} \bar{\be}\bar{\ga}} = 0$ also holds on $\cM$.
The remaining components in (\ref{vanishing_constraints})  also vanish
due to the identities
\begin{gather}
- 2 d^0_{\mss \bar{\al}  0 \bar{\be} } = \ve^{\bar{\mu}}_{\mss \bar{\al} \bar{\be} }
 \ve_{\bar{\mu}}^{\mss \bar{\si}\bar{\tau}} d^0_{\mss \bar{\si}0\bar{\tau}} ,
 \nonumber \\
 -2 d^0_{\mss \bar{\nu}\bar{\la}\bar{\mu}} = 
\ve_{\bar{\mu}\bar{\la}}^{\mss\mss \bar{\al}}
\ve_{(\bar{\nu}}^{\msm \bar{\be}\bar{\ga}} d^0_{\mss \bar{\al}) \bar{\be} \bar{\ga}} 
+  \pi_{\bar{\nu} \bar{\mu}}^{}  d^{\bar{\ga}}_{\mss 0 \bar{\la} \bar{\ga}}
-  \pi_{\bar{\nu} \bar{\la}}^{}  d^{\bar{\ga}}_{\mss 0 \bar{\mu} \bar{\ga}},
\nonumber
\end{gather}
which are verified by inspection with the help of 
(\ref{anti_sym_d_00}), (\ref{identity_ve_1}) and (\ref{identity_ve_2}).
\end{proof}

\begin{rema}
Notice that, as in the familiar case of wave coordinates,  propagation
of the gauge requires that the constraint equations be satisfied.
\end{rema}

We already know that the connection associated with $\Ga_{\al \ms \be}^{\ms \ga}$
is metric. Since it is also torsion-free by 
proposition \ref{prop_constraints_vanish}, we obtain:

\begin{coro}
The connection defined by the coefficients $\Ga_{\al \ms \be}^{\ms \ga}$ is the Levi-Civita
connection of the metric $g$.
\label{coro_Levi}
\end{coro}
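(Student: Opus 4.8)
The plan is simply to invoke the fundamental theorem of (pseudo\nobreakdash-)Riemannian geometry: on the Lorentzian manifold $(\cM,g)$ there is a unique affine connection that is simultaneously compatible with $g$ and torsion-free, namely the Levi-Civita connection of $g$. It therefore suffices to verify that the connection $\nabla$ determined by the coefficients $\Ga_{\al \ms \be}^{\ms \ga}$ of proposition \ref{prop_well_pos_reduded} enjoys both properties, and each of these is already available from earlier in the paper.

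For metric compatibility I would observe that \eqref{metric_compatible} has been imposed throughout the construction of the reduced system, so it holds for the $\Ga_{\al \ms \be}^{\ms \ga}$ produced by proposition \ref{prop_well_pos_reduded}. In the orthonormal frame $\{e_\mu\}$, in which $g_{\al\be}$ is represented by the constant matrix $\operatorname{diag}(1,-1,-1,-1)$, the identity \eqref{metric_compatible} is exactly $\nabla_\al g_{\be\ga}=0$; equivalently, combining \eqref{conn_coeff} with \eqref{metric_compatible} one reads off $e_\al\langle e_\be,e_\ga\rangle = \langle\nabla_\al e_\be,e_\ga\rangle+\langle e_\be,\nabla_\al e_\ga\rangle$, which is metric compatibility of $\nabla$. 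For the torsion-free condition I would invoke proposition \ref{prop_constraints_vanish}, which gives $\cT_{\al\msm \be}^{\mss \ga}=0$ on all of $\cM$.

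With both hypotheses in hand, the uniqueness clause of the fundamental theorem (equivalently, the Koszul formula, which expresses any metric torsion-free connection solely in terms of $g$) forces $\nabla$ to coincide with the Levi-Civita connection of $g$, which is precisely the assertion of the corollary. I do not expect any genuine obstacle here: the corollary is merely the synthesis of the metric-compatibility condition \eqref{metric_compatible}, which is built into Friedrich's frame formalism from the outset, with the vanishing of the torsion established in the propagation-of-gauge step (proposition \ref{prop_constraints_vanish}). The only point worth stating explicitly is that this is what makes the whole scheme consistent: once $\nabla$ is identified with the Levi-Civita connection, the tensors $d^\al_{\mss\be\ga\de}$, $F_{\al\be\ga}$ and $q_\al$ recover their intended geometric meaning, so that $W$ is genuinely the Weyl tensor and the original Einstein-Euler-Entropy equations hold.
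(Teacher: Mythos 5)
Your argument is exactly the paper's: metric compatibility is built in via \eqref{metric_compatible}, torsion-freeness follows from proposition \ref{prop_constraints_vanish}, and uniqueness of the Levi-Civita connection does the rest. The paper states this in a single sentence immediately before the corollary; your version merely spells out the standard facts a bit more explicitly, which is fine.
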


\subsection{Solution to the original system.}

Showing that the solution $z$ of the reduced system yields a solution 
to the original Einstein-Euler-Entropy system is now a matter of unwrapping
all our definitions. 

\begin{prop}
Let $W^\al_{\mss \be\ga\de}$ and $S_{\al \be}$ be as in section
\ref{propagation_gauge} and $g$ the metric constructed out of the solution
of the reduced system given in proposition \ref{prop_well_pos_reduded}.
Then $W^\al_{\mss \be\ga\de}$ and $S_{\al \be}$ are, respectively,
the Weyl and the Schouten tensor of the metric $g$. Furthermore, 
equations (\ref{EEE_frame_system}) are satisfied, and the quantity $s_\al$ from
proposition \ref{prop_well_pos_reduded} is in fact the derivative of $s$.
\label{prop_EEE_frame_solutions}
\end{prop}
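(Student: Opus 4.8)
The plan is to extract the proposition from Corollary~\ref{coro_Levi} and Proposition~\ref{prop_constraints_vanish}. By Corollary~\ref{coro_Levi}, the connection with coefficients $\Ga_{\al\ms\be}^{\ms\ga}$ is the Levi-Civita connection of $g$, so the tensor $R^\al_{\ms\be\ga\de}$ defined by (\ref{definition_Riemann}) is the genuine Riemann curvature of $g$ and has all the algebraic symmetries of a curvature tensor. By Proposition~\ref{prop_constraints_vanish}, $d^\al_{\ms\be\ga\de}=0$ on $\cM$, so (\ref{decomposition}) reduces to the pointwise identity $R^\al_{\ms\be\ga\de}=W^\al_{\ms\be\ga\de}+g^\al_{\ms[\ga}S_{\de]\be}^{}-g_{\be[\ga}^{}S_{\de]}^{\ms\al}$.

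First I would identify $S_{\al\be}$. Contracting this identity on $\al$ and $\ga$, and using that $W^\al_{\ms\be\al\de}=0$ (trace-freeness of $W$, from Corollary~\ref{coro_trace_eq_state}) together with the symmetry of $S_{\al\be}$ (inherited from the symmetry of $T_{\al\be}$ in (\ref{perfect_fluid_source}) via (\ref{Einstein_eq_equivalent})), a short computation gives $R_{\be\de}=S_{\be\de}+\frac{1}{2}g_{\be\de}S^\mu_{\ms\mu}$; tracing once more yields $S^\mu_{\ms\mu}=\frac{1}{3}R$, hence $S_{\be\de}=R_{\be\de}-\frac{1}{6}Rg_{\be\de}$. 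This is exactly the defining relation (\ref{Schouten}), so $S_{\al\be}$ is the Schouten tensor of $g$; substituting back, $W^\al_{\ms\be\ga\de}$ equals $R^\al_{\ms\be\ga\de}$ minus the standard Schouten--metric term, i.e.\ $W^\al_{\ms\be\ga\de}$ is the Weyl tensor of $g$. Since $S_{\al\be}$ was defined by (\ref{Einstein_eq_equivalent}) and now coincides with $R_{\al\be}-\frac{1}{6}Rg_{\al\be}$, reversing the manipulations (\ref{trace_Einstein_eq})--(\ref{Einstein_eq_equivalent}) yields $R_{\al\be}-\frac{1}{2}Rg_{\al\be}=\K T_{\al\be}$, so the Einstein equations (\ref{Einstein_eq}) hold.

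Next I would verify (\ref{EEE_frame_system}) term by term: the constraint $u^\al u_\al=1$ holds because $u=e_0$ and the frame is orthonormal with $g_{00}=1$; (\ref{Euler_1}), (\ref{rest_mass_conservation}) and (\ref{loc_adiabatic}) hold by Lemma~\ref{lemma_prep}; (\ref{eq_of_state}) holds by Corollary~\ref{coro_trace_eq_state}; (\ref{pressure}) and (\ref{sound_speed}) hold because $p$ and $\nu^2$ were defined through them; (\ref{Einstein_eq_equivalent}) holds by the definition of $S_{\al\be}$ and, by the previous paragraph, is a genuine curvature identity; and $q_\al=0$, $\cT_{\al\msm\be}^{\mss\ga}=0$, $d^\al_{\ms\be\ga\de}=0$, $F_{\al\be\ga}=0$ are precisely the conclusion of Proposition~\ref{prop_constraints_vanish}.

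Finally, for $s_\al=\nabla_\al s$, set $\si_\al:=\nabla_\al s$ with $\nabla$ the (now Levi-Civita) connection. Since the torsion vanishes, the Hessian of the scalar $s$ is symmetric, so $\nabla_0\si_{\bar\al}=\nabla_{\bar\al}\si_0$; using $\si_0=e_0(s)=\partial_t s=0$ (by (\ref{reduced_eq_s}), with $e_0=\partial_t$ in our gauge) and expanding both sides in the frame gives $\partial_t\si_{\bar\al}=(\Ga_{0\ms\bar\al}^{\ms\mu}-\Ga_{\bar\al\ms0}^{\ms\mu})\si_\mu$, which is precisely the transport equation (\ref{reduced_eq_s_al}) obeyed by $s_\al$ (the $\al=0$ component being $\partial_t\si_0=0=\partial_t s_0$ in both cases). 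By the construction in Proposition~\ref{prop_initial_data}, $s_\al$ and $\si_\al$ agree on $\Si$, and since (\ref{reduced_eq_s_al}) contains no spatial derivatives it is a linear ordinary differential equation in $t$ at each point, so uniqueness forces $s_\al=\si_\al$ on $\cM$. I expect the main work to be not any single hard estimate --- the analysis was done in the earlier propositions --- but the bookkeeping of checking that the many quantities defined independently in Sections~\ref{reduced_section} and~\ref{propagation_gauge} fit together; the decisive such check is the contraction identity that pins $S_{\al\be}$ down as the Schouten tensor, which is what simultaneously makes $W^\al_{\ms\be\ga\de}$ the Weyl tensor and $g$ a solution of the Einstein equations.
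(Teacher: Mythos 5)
Your proposal is correct and follows essentially the same route as the paper: Corollary \ref{coro_Levi} plus the vanishing of $d^\al_{\mss\be\ga\de}$, a trace argument using the trace-freeness of $W$ to identify $S_{\al\be}$ with the Schouten tensor (the paper phrases this by comparing against the true Weyl and Schouten tensors and tracing the difference, but the computation is the same), and then the earlier lemmas for the remaining equations. Your treatment of $s_\al$ via symmetry of the Hessian of $s$ is just a rephrasing of the paper's Lie-derivative identity $\cL_u\nabla_\al s=0$, and the conclusion by uniqueness for the transport equation matches the paper.
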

\begin{proof}
The connection given by $\Ga_{\al \ms \be}^{\ms \ga}$ is the Levi-Civita
connection of $g$ by corollary \ref{coro_Levi}. Thus, if we denote by
$\widehat{ W}^\al_{\mss \be\ga\de}$ and $\widehat{S}_{\al \be}$ 
the Weyl and the Schouten tensor of the metric $g$, we have 
\begin{gather}
R^\al_{\ms\be\ga\de} =  \widehat{ W}^\al_{\ms \be \ga \de} +
g^\al_{\ms [\ga} \widehat{ S }_{\de]\be}^{} - g_{\be[\ga}^{} \, \widehat{ S }_{\de]}^{\ms\al}
\nonumber
\end{gather}
which in turn equals
$  W^\al_{\ms \be \ga \de} +
g^\al_{\ms [\ga} S_{\de]\be}^{} - g_{\be[\ga}^{}S_{\de]}^{\ms\al}$
since $d^\al_{\mss \be\ga\de} = 0$ by proposition \ref{prop_constraints_vanish}.
Hence, tracing the equality 
\begin{gather}
\widehat{ W }^\al_{\ms \be \ga \de} +
g^\al_{\ms [\ga} \widehat{ S }_{\de]\be}^{} - g_{\be[\ga}^{} \widehat{ S }_{\de]}^{\ms\al} 
= 
  W^\al_{\ms \be \ga \de} +
g^\al_{\ms [\ga} S_{\de]\be}^{} - g_{\be[\ga}^{}S_{\de]}^{\ms\al}
\label{Weyl_Schouten}
\end{gather}
and using that $W$ is traceless in light of corollary \ref{coro_trace_eq_state},
we obtain
that $S_{\al\be}$ is indeed the Schouten tensor, which 
then implies $W^\al_{\ms \be \ga \de} = \widehat{ W }^\al_{\ms \be \ga \de}$
by using (\ref{Weyl_Schouten}) again. Then (\ref{Einstein_eq_equivalent}), with $S_{\al\be}$ being the Schouten tensor, also holds.
The remaining equations 
of (\ref{EEE_frame_system}) are satisfied by propositions \ref{well_posedness_reduced_section}
and \ref{prop_constraints_vanish}, and corollary \ref{coro_trace_eq_state}. Notice that these results show 
the validity of the (\ref{EEE_frame_system}) in fluid source gauge, but by the tensorial
nature of the equations, they hold in any frame.

Put  $\widehat{s}_\al = \nabla_\al s$. Then, since $u^\al \nabla_\al s = 0$ by (\ref{reduced_eq_s}), we obtain $\cL_u \widehat{s}_\al = 0$, where $\cL$ is the Lie derivative,
which in turn implies
\begin{gather}
\partial_t \widehat{s}_\al - (\Ga_{0 \msm \al}^{\ms \mu} - \Ga_{\al \msm 0}^{\ms \mu} ) \widehat{s}_\mu = 0 .
\nonumber
\end{gather}
From (\ref{reduced_eq_s_al}) and the construction of the initial data (proposition
\ref{prop_initial_data}), it  follows that $\widehat{s}_\al = s_\al$.
\end{proof}

\noindent \emph{Proof of theorem \ref{main_theorem}:} 
By proposition \ref{prop_EEE_frame_solutions} and (\ref{div_T_q}) we obtain that
the equations of the Einstein-Euler-Entropy system are satisfied. Notice that 
$u^\al u_\al =1$ holds by the way the solution was constructed; so 
$\nabla^\mu T_{\mu \al} = 0$ in fact implies 
(\ref{Euler_1}) and (\ref{Euler_2}). That $\cM \approx [0,T_E] \times \Si$
is indeed an Einsteinian development follows from the fact that the constraint
equations are satisfied, and by construction $(\cM, g, u)$ is a perfect fluid source.

Let  $\{ e_\mu \}_{\mu = 0}^3$ be a perfect fluid source gauge, with
the coordinates $\{ x^A \}_{A=0}^3$ arranged as explained below definition
\ref{def_fluid_source_gauge}. Since $e^{\bar{A}}_{\ms \bar{\al}}$ 
is in 
$C^0([0,T_E], H_{ul}^{s+1}(\Si))$ $\cap 
C^1([0,T_E], H_{ul}^{s}(\Si)) $ and $e^A_{\ms 0} = \de^A_{\ms 0}$,
by 
(\ref{def_metric_frame_coeff})
we conclude that the inverse of the one-parameter family of
 metrics $g_t$ induced on
$\Si_t = \{ t = \text{ constant} \}$ belongs to 
$C^0([0,T_E], H_{ul}^{s+1}(\Si)) \cap 
C^1([0,T_E], H_{ul}^{s}(\Si))$, and so does $g_t$ itself because $s > \frac{3}{2} + 2$.
Since $d^\al_{\mss \be\ga\de} = 0$ and 
$E_{\bar{\al}\bar{\be}},\, B_{\bar{\al}\bar{\be}}  \in 
 C^0([0,T_E], H_{ul}^{s+1}(\Si)) \cap 
C^1([0,T_E], H_{ul}^{s}(\Si)) \cap C^2([0,T_E], H_{ul}^{s-1}(\Si))$, by 
(\ref{decomp_W}), (\ref{E_0_B_0_gauge}), (\ref{E_bar_B_bar_gauge}),
propositions
\ref{prop_constraints_vanish} and \ref{prop_well_pos_reduded}, we conclude that
$g \in  C^0([0,T_E], H_{ul}^{s+1}(\Si)) \cap 
C^1([0,T_E],$ $ H_{ul}^{s}(\Si)) \cap C^2([0,T_E],H_{ul}^{s-1}(\Si))$. 
The four-velocity $u$ belongs to 
$C^0([0,T_E],$ $ H_{ul}^{s}(\Si))\cap 
C^1([0,T_E], H_{ul}^{s-s}(\Si))$ and has the correct projection because
of (\ref{v_g}) and proposition \ref{prop_well_pos_reduded}.

The functions $r$ and $s$ have the desired regularity and take the correct 
initial values by the way they have been constructed, and $\varrho$ and $p$ are
given by (\ref{eq_of_state}) and (\ref{pressure}) as a consequence of corollary
\ref{coro_trace_eq_state}, which implies that $\nu^2$ also has the correct form.
By continuity on the time variable and the hypotheses of the theorem, we
obtain that $r>0$ and $\nu^2 > 0$ for small  $T_E$. We cannot have
$ s(p) < 0$ for a point $p$ near the initial Cauchy surface because this would contradict
$u^\al \nabla_\al s =0$ and $\scr_0 \geq 0$,  hence $s \geq 0$.
\hfill $\qed$.

\begin{rema}
The less regular frame coefficients $e^0_{\ms \bar{\al}}$ do not affect
the regularity of the space-time metric because they contribute only to the
mixed entries  $g_{0\bar{A}}$, which are 
gauge terms  therefore having no direct physical or geometrical
meaning.
\end{rema}

\section{Further remarks.\label{further}}

The case of barotropic fluids is treated as a particular case of 
theorem \ref{main_theorem}, at least as long as 
(\ref{sound_speed_positive}) and (\ref{enthalpy_positive}) hold.
Although the condition $\nu^2 > 0$ is violated by pressure-free matter,
in this situation, equations (\ref{EEE_frame_system}) simplify considerably;
 a reduced system which does not require the introduction of $\nu^2$ 
can be derived \cite{Fri}, and a system for the propagation of the gauge,
which does not involve $\nu^2$ either can also be constructed \cite{FriRen}.
The arguments of section \ref{proof_main_theorem_section} can then
be reproduced, yielding a statement analogous to theorem 
\ref{main_theorem} for pressure-free matter. Notice also that 
when a fluid is isentropic, the entropy can be treated
as a parameter in the equation of state, and the equations of motion take the form
of  those of a barotropic fluid. 

It should also be noticed that condition (\ref{sound_speed_causal}) has never been
used. In fact, such inequality is necessary due to causality, but it plays
no role on the well-posedness of the Einstein-Euler-Entropy system. There
is at least one situation where it may be desirable to consider equations
of state where $\nu^2 \leq 1$ is not satisfied, namely, the construction 
of appropriate gauge conditions for the vacuum Einstein equations. In this
situation, setting $\K = 0$ in our system, the Euler equations decouple, and 
the role of the four-velocity $u$ is to fix the gauge. 
As Friedrich has pointed out \cite{Fri}, this procedure 
may be particularly important in numerical treatments of Einstein equations,
where ever more sophisticated gauge choices are crucial for accurate results.

Finally, we remark that although theorem \ref{main_theorem} does not
provide an existence result for the fluid body discussed in the introduction, 
when one is  interested solely in its behavior near 
a small compact set $\Om \subset \Si$, the hypothesis that $r_0$ is uniformly bounded
away from zero can be replaced by the condition that $r_0$ is positive
in $\Om$ and decays sufficiently fast on its complement. As 
mentioned in section \ref{basic_setting}, 
this will not generally yield a uniform time span for the solution,
but a uniform $T_E>0$ will exist in the neighborhood of $\Om$.
Whether this suffices for studying the dynamics of the fluid near 
 $\Om$ will obviously depend on the particular application
one has in mind.

\appendix

\section{The uniformly local Sobolev spaces.\label{ul_Sobolev_appendix}}

Below we review the  uniformly local Sobolev spaces
originally introduced by Kato \cite{KatoQL}. Some of their properties
can also be found in \cite{C}.

Let $(M,\ga)$ be a Riemannian manifold. For any open set $U \subseteq M$,
let $H^s(U, \ga)$ be the Sobolev space of tensors of a given rank defined on
$U$, with the derivative $\nabla$ and the measured of integration
$\mu$
used to define the Sobolev norm being those of the metric $\ga$.
Recall that  $H^s_{loc}(M, \ga)$ is defined as the space of tensor 
fields that belong to $H^s(U,\ga)$ for any relatively compact $U \subseteq M$.

\begin{defi}
Let $\{ U_j \}$ be a locally finite covering of $M$ by relatively compact
open sets $U_j$. We define $H^s_{ul}(M,\ga)$ as the space of 
tensor fields $f \in H^s_{loc}(M,\ga)$  such that
\begin{gather}
\p f \p_{H^s_{ul}} : = \sup_j \p f \p_{H^s(U_j, \ga)} < \infty.
\nonumber
\end{gather}
\end{defi}

$H^s_{ul}(M,\ga)$  is a Banach space with the above norm.
$H^s_{ul}(M,\ga)$ will have the usual embedding and multiplication
properties of Sobolev spaces provided  all of the $H^s(U_j, \ga)$ have them.
This will be the case when  $(M,\ga)$ has injectivity radius bounded from below
away from zero.
In this case, it also holds that the Sobolev constants relative to the subsets
$U_i$ are uniformly bounded, and $\left.  \ga\right|_{U_i}$ is uniformly equivalent
to the Euclidean metric.

\section{Derivation of the reduced system.\label{derivation_appendix}}

In this appendix, it is shown how equations (\ref{reduced_system}) 
are obtained from (\ref{EEE_frame_system}) after making a gauge choice. 
This has been done first by Friedrich in \cite{Fri}, which the reader is referred
to for more details.

Assume that a solution to the Einstein-Euler-Entropy system in the frame
formalism, equations (\ref{EEE_frame_system}), is given. 

In fluid source gauge, the vanishing of $\cT_{\,0 \ms \bar{\al}}^{\mss \mu}$ 
corresponds  
to  equation (\ref{reduced_eq_frame}), while
(\ref{definition_Riemann}), (\ref{decomp_W}) and $d^\al_{\mss \be\ga\de} = 0$
give (\ref{reduced_eq_Ga_bar}).

It follows from our definitions that
\begin{align}
\begin{split}
& \frac{1}{p+\varrho} \nabla_{[\al}q_{\be]} = 
 u^\mu \nabla_\al \nabla_\mu u_\be - u^\mu \nabla_\be \nabla_\mu u_\alpha
 - \nu^2 u_\be \nabla_\al \nabla_\mu u^\mu + \nu^2 u_\al \nabla_\be \nabla_\mu u^\mu
 \\
 & -\nu^2 \nabla_\mu u^\mu \nabla_\al u_\be 
  + \nu^2 \nabla_\mu u^\mu \nabla_\be u_\al
 + \nabla_\al u^\mu \nabla_\mu u_\be  - \nabla_\be u^\mu \nabla_\mu u_\al
 \\
 &
 + \frac{p+\varrho}{\nu^2} \left( \frac{\partial p }{\partial \varrho^2} \right)_s
 \nabla_\mu u^\mu   \Big [ u_\al u^\la \nabla_\la u_\be 
 - u_\be u^\la \nabla_\la u_\al \Big ] \\
 &  +\Big [ \frac{\partial \nu^2}{\partial s} - \frac{1}{p+\varrho}
 \Big ( 1 + \frac{r}{\nu^2}\frac{\partial \nu^2}{\partial r} \Big ) \frac{\partial p}{\partial s}
 + \frac{1}{\varrho + p } \nu^2 \frac{\partial \varrho}{\partial s} 
 \Big ] \Big( u_\al \nabla_\mu u^\mu s_\be - u_\be \nabla_\mu u^\mu s_\al \Big ) 
 \\
 & + \frac{1}{p + \varrho} \Big( \frac{\partial \varrho}{\partial s} - \frac{1}{\nu^2} \frac{\partial p}{\partial s} \Big ) \Big( s_\al u^\mu \nabla_\mu u_\be  - s_\be u^\mu \nabla_\mu u_\be  \Big ).
 \end{split}
 \nonumber
\end{align}
Combining the quantities in the system (\ref{EEE_frame_system}) with the above expression,
we obtain that in fluid source gauge equations (\ref{reduced_eq_Ga_0_0}) and 
(\ref{reduced_eq_Ga_0}) correspond to
\begin{gather}
\nu^2 d^{\bar{\mu}}_{\mss 0 \bar{\al} \bar{\mu}} - \frac{1}{p + \varrho}
\nabla_{[0} q_{ \bar{\al} ] } = 0
\nonumber
\end{gather}
and 
\begin{gather}
\nu^2 d^{0}_{\mss  \bar{\al} 0 \bar{\be}} + \frac{\nu^2 }{p + \varrho}
\nabla_{[\bar{\al} } q_{ \bar{ \be } ] } = 0,
\nonumber
\end{gather}
respectively.

Next, notice that equations (\ref{reduced_eq_rho}), (\ref{reduced_eq_s})
and (\ref{reduced_eq_r}) correspond to (\ref{Euler_1}), (\ref{loc_adiabatic})
and (\ref{rest_mass_conservation}) when written in fluid source gauge,
whereas (\ref{reduced_eq_s_al}) is the same as $\cL_u \nabla_\al s = 0$, which 
is implied by equations (\ref{EEE_frame_system}) ($\cL$ is the Lie derivative).

Finally, we have the decomposition
\begin{gather}
\nabla^\mu T_{\mu \al} = w u_\al + q_\al 
\nonumber
\end{gather}
where $w$ is given by the left-hand side of (\ref{Euler_1}). From (\ref{div_Friedrichs_tensor})
and (\ref{Einstein_eq_equivalent}), we obtain
\begin{align}
-\frac{1}{2} \K q_\al = 
\pi_\mu^{\mss \si} \pi_\nu^{\mss \tau} \pi_\al^{\mss \la}  F_{\si\tau\la} \pi^{\mu\nu}
- \pi_\al^{\mss \la} F_{\mu \la \nu} u^\mu u^\nu,
\nonumber
\end{align}
and
\begin{gather}
\frac{1}{2} \K w = 
\pi_\mu^{\mss \si}  \pi_\nu^{\mss \la}  F_{\si\tau\la} u^\tau \pi^{\mu\nu}.
\nonumber
\end{gather}
With the help of these expressions 
and (\ref{decomp_W}) and defining $\D$ as in (\ref{spatial_derivative}), 
it  follows that
\begin{align}
\begin{split}
& 
\pi_{(\al}^{\ms \mu} \pi_{\be)}^{\ms \nu} F_{\mu \tau \nu} u^\tau
- \frac{1}{3} \pi_{\al\be} \pi^{\mu\nu} \pi_\mu^{\mss \si} \pi_\nu^{\mss \la}
F_{\si \xi \la} u^\xi = 
u^\mu \nabla_\mu E_{\al\be} + E_{\mu\be}\nabla_\al u^\mu+ 
\\
&
\hspace{0.5cm}
E_{\mu\al}\nabla_\be u^\mu + \D_\mu B_{\nu (\al}^{} \ve_{\be)}^{\msm \mu \nu} 
-2 u^\la \nabla_\la u_\mu \ve^{\mu \nu}_{\mss\mss(\al} B^{}_{\be) \nu}
-3  \pi_{(\al}^{\ms \mu} \nabla_{|\mu|} u^\la E^{}_{\be)\la} 
\\
&
\hspace{0.5cm}
 - 2 \pi^{\mu}_{\mss \la} \nabla^\la u_{(\al} E^{}_{\be)\mu} 
+ \pi_{\al\be}  \pi^{\mu\la}\nabla_\la u^\nu E_{\mu\nu}
+ 2 \pi^{\mu\nu}\pi_{\mu}^{\mss \la}\nabla_\la u_\nu E_{\al \be}
\\
& 
\hspace{0.5cm}
+ \frac{1}{2} \K (p + \varrho) \Big ( \pi_{(\al}^{\ms \la }\nabla_{|\la|} u^{}_{\be)}
- \frac{1}{3} \pi^{\mu\nu}\pi_{\mu}^{\mss \la}\nabla_\la u_\nu \pi_{\al\be} \Big ) ,
\end{split}
\nonumber
\end{align}
which, in fluid source gauge, gives (\ref{reduced_eq_E}). The equation
(\ref{reduced_eq_B}) is obtained by a similar argument
after computing 
\begin{gather}
\ve_{(\al}^{\msm \mu \nu} \pi_{\be)}^{\msm \la} \pi_\mu^{\mss \si}
\pi_\nu^{\tau} F_{\la \si \tau}.
\nonumber
\end{gather}

\vskip 2cm

\end{document}